\numberwithin{equation}{section}
\numberwithin{table}{section}
\numberwithin{figure}{section}
\DeclareMathOperator*{\spn}{span}
\DeclareMathOperator*{\diag}{diag}
\DeclareMathOperator*{\rank}{rank}
\DeclareMathOperator*{\argmin}{arg\,min}
\newcommand{\transp}{^{\mathsf T}}
\newcommand{\herm}{^{\ast}}
\newcommand{\inv}{^{-1}}
\newcommand{\Id}{\mathrm{Id}}
\newcommand{\Hmix}{H_{\mathrm{mix}}}
\title[Least Squares Approximation for Noisy Samples]{Error Guarantees for Least Squares Approximation\\ with Noisy Samples in Domain Adaptation}
\keywords{domain adaptation, individual function approximation, least squares, sampling theory, transfer learning, unit cube, polynomial approximation}
\subjclass{
    41A10, 41A25, 41A60, 41A63, 42C10, 65Txx, 65F22, 65D15, 94A20 }
\author[F.~Bartel]{\firstname{Felix} \lastname{Bartel}}
\address{Chemnitz University of Technology, Faculty of Mathematics, 09107 Chemnitz, Germany}
\email{felix.bartel@mathematik.tu-chemnitz.de}
\thanks{The author was supported by the Deutscher Akademischer Austauschdienst (DAAD)}
\begin{document} 

\begin{abstract}
    Given $n$ samples of a function $f\colon D\to\mathds C$ in random points drawn with respect to a measure $\varrho_S$  we develop theoretical analysis of the $L_2(D, \varrho_T)$-approximation error.
    For a parituclar choice of $\varrho_S$ depending on $\varrho_T$, it is known that the weighted least squares method from finite dimensional function spaces $V_m$, $\dim(V_m) = m < \infty$ has the same error as the best approximation in $V_m$ up to a multiplicative constant when given exact samples with logarithmic oversampling.
    If the source measure $\varrho_S$ and the target measure $\varrho_T$ differ we are in the domain adaptation setting, a subfield of transfer learning.
    We model the resulting deterioration of the error in our bounds.

    Further, for noisy samples, our bounds describe the bias-variance trade off depending on the dimension $m$ of the approximation space $V_m$.
    All results hold with high probability.
    
    For demonstration, we consider functions defined on the $d$-dimensional cube given in unifom random samples.
    We analyze polynomials, the half-period cosine, and a bounded orthonormal basis of the non-periodic Sobolev space $H_{\mathrm{mix}}^2$.
    Overcoming numerical issues of this $H_{\text{mix}}^2$ basis, this gives a novel stable approximation method with quadratic error decay.
    Numerical experiments indicate the applicability of our results.
\end{abstract}

\maketitle

\section{Introduction}
In this paper we study the reconstruction of complex-valued functions on a $d$-dimensional domain $D\subset \mathds R^d$ from possibly noisy function values
\begin{align*}
    \bm y = \bm f + \bm \varepsilon = (f(\bm x^1) + \varepsilon_1, \dots, f(\bm x^n)+\varepsilon_n)\transp\,,
\end{align*}
which are sampled in random points $\bm x^1, \dots, \bm x^n \in D$.
We consider error bounds for the weighted least squares method for individual functions, which is common in, e.g.\ partial differential equations \cite{CCMNT15} or uncertainty quantification \cite{HD15}.
In this setting, the samples are drawn after the function is fixed in contrast to worst-case or minmax-bounds, which hold for a class of functions and usually do not include noise in the samples.
For individual function approximation the majority of $L_2$-error bounds are stated in expectation, cf.\ \cite[Thm.~1.1]{Baraud02} for penalized least-squares, \cite[Thm.~3]{CDL13} for plain least-squares or, \cite[Thm.~4.1]{HNP22}, and \cite[Thm.~6.1]{KUV21} for weighted least squares.
Bounds, which hold with high probability, are known for polynomial approximation, cf.\ \cite[Thm.~3]{MNST14}, wavelet approximation, cf.\ \cite[Thms.~3.20 \& 3.21]{LPU21}, or in a more general setting incuding noise in \cite[Thm.~4.3]{CM17} with the coarser $L_\infty$-norm instead of the natural $L_2$-norm in the estimate.
Further, in \cite[Thm.~4.1]{CM17} an error bound with the natural $L_2$-norm estimate is presented in expectation with the same behaviour as we will present with high probability.
The contribution and novelty of this paper is twofold:
\begin{itemize}
\item
    We use concentration inequalities to show error bounds in the $L_2$- and $L_\infty$-norm which hold with high probability, including the noisy case.
    The behaviour of our bound is similar to \cite[Thm.~4.1]{CM17}, which is stated in expectation.
    Approximating from an $m$-dimensional function space we achieve the best error up to a multiplicative constant using logarithmic oversampling.
    Note, there exists a distribution such that linear oversampling achieves the optimal error but this is not constructive, cf.\ \cite{DC22}.
    Including noise, our bounds reflect the typical bias-variance trade off which one wants to balance to prevent over- or underfitting.
    The results enable to give performance guarantees for model selection strategies like the balancing principle \cite{PL13, LMP20} or cross-validation \cite{BaHiPo19, BH21}.
\item
    For an application we have a look at approximation on the $d$-dimensional unit cube $[0,1]^d$ when samples are distributed uniform according to the Lebesgue measure.
    A result with focus on polynomial approximation in the one-dimensional space is \cite[Thm.~3]{MNST14} which is improved by the general result \cite[Thm.~2.1]{CM17}.
    There, the aproximation error is estimated by the $L_\infty$-error of the projection with high probability and to the more natural $L_2$-error of the projection in expectation.
    We obtain a bound by the $L_2$-error of the projection which also holds with high probability.
    A drawback of polynomials is the need for quadratic oversampling, which we show for the Legendre polynomials but holds in general, cf.\ \cite{MNST14}.
    To circumvent this, we use the eigenfunctions of the embedding $\Id \colon H^s \to L_2$ from the Sobolev space $H^s$ for $s=1, 2$ which allow for logarithmic oversampling.
    The $H^1$ basis, also known as half-period cosine, was introduced in \cite{Krein35} and has become the standard in many applications and is researched thouroughly, cf.\ \cite{IN08, WW08, Adcock10, AH11, DDP14, SNC16, CKNS16, KMNN21}.
    But also for functions in Sobolev spaces $H^s$ of higher smoothness their convergence is limited to be linear in theory (the rate $3/2$ can be observed in practice).
    This can be improved by using the $H^2$ basis, examined theoretically in \cite[Section 3]{AIN12} to have quadratic convergence.
    So far it is not used as it is prone to numerical errors and unusable for higher degree approximation.
    Here, we propose an approximation and prove its accuracy which leads to a numerically stable way for approximating non-periodic uniform data with quadratic convergence.
\end{itemize}

For a more detailed formulation we need some notation.
Given an $m$-dimensional function space $V_m \subset L_2$, we define the best possible approximation (projection) to $f\colon D\to\mathds C$ in $V_m$ and its error:
\begin{align*}
    P(f, V_m, L_p)
    = \argmin_{g\in V_m} \|f-g\|_{L_p}
    \text{ and }
    e(f, V_m, L_p)_{L_q}
    = \|f-P(f, V_m, L_p)\|_{L_q}
\end{align*}
for $p,q\in\{2,\infty\}$.
Note, since $V_m$ is finite-dimensional the minimum is actually attained.
Following \cite{Baraud02, CDL13, MNST14, CM17, KUV21, LPU21}, we use weighted least squares $S_m$, defined in \eqref{eq:lsqrmatrix}, as underlying approximation method.
Because of its linearity, the approximation error $\|f-S_m\bm y\|_{L_2}$ splits as follows:
\begin{align*}
    \|f-S_m\bm y\|_{L_2}^2
    &= e(f,V_m,L_2)_{L_2}^2 + \|P(f,V_m,L_2)-S_m\bm y\|_{L_2}^2 \\
    &\le
    \underbrace{
        e(f,V_m,L_2)_{L_2}^2
    }_{\text{truncation error}}
    +2\underbrace{
        \|P(f,V_m,L_2)-S_m\bm f\|_{L_2}^2
    }_{\text{discretization error}}
    +2\underbrace{
        \|S_m\bm\varepsilon\|_{L_2}^2
    }_{\text{noise error}} \,.
\end{align*}
For fixed number of points $n$, we have a look at the behaviour with respect to $m$, the dimension of the approximation space $V_m$.
The truncation error is the best possible benchmark and usually has polynomial decay $m^{-s}$ for some rate $s\ge 1$ depending on $f$ and the choice of $V_m$.
We show, that the discretization error obeys the same decay as the truncation error.
Thus, given logarithmic oversampling, we obtain the best possible error up to a multiplicative constant in the noiseless case, cf.\ Theorem~\ref{L2wo}.

Including noise, we show that we get an additional summand growing linear in $m$, cf.\ Theorems~\ref{L2w}.
This resambles the well-known bias-variance trade off modeling the over- and undersmoothing effects which one wants to balance, cf.\ \cite{GKKW02, PL13}.
This linear behaviour in $m$ is approved by \cite[Thm.~4.9]{LMP20} (by using the regularization $g_\lambda(\sigma) = 1/(\lambda+\sigma)$ with $\lambda = 0$).
An example of that behaviour for $D = [0,1]$ and $\varrho_T = \mathrm dx$ being the Lebesgue measure is depicted in Figure~\ref{fig:experiment_1d} where the detailed example is found in Section~\ref{sec:application}.
\begin{figure}
    \centering
    \begingroup
  \makeatletter
  \providecommand\color[2][]{\GenericError{(gnuplot) \space\space\space\@spaces}{Package color not loaded in conjunction with
      terminal option `colourtext'}{See the gnuplot documentation for explanation.}{Either use 'blacktext' in gnuplot or load the package
      color.sty in LaTeX.}\renewcommand\color[2][]{}}\providecommand\includegraphics[2][]{\GenericError{(gnuplot) \space\space\space\@spaces}{Package graphicx or graphics not loaded}{See the gnuplot documentation for explanation.}{The gnuplot epslatex terminal needs graphicx.sty or graphics.sty.}\renewcommand\includegraphics[2][]{}}\providecommand\rotatebox[2]{#2}\@ifundefined{ifGPcolor}{\newif\ifGPcolor
    \GPcolortrue
  }{}\@ifundefined{ifGPblacktext}{\newif\ifGPblacktext
    \GPblacktexttrue
  }{}\let\gplgaddtomacro\g@addto@macro
\gdef\gplbacktext{}\gdef\gplfronttext{}\makeatother
  \ifGPblacktext
\def\colorrgb#1{}\def\colorgray#1{}\else
\ifGPcolor
      \def\colorrgb#1{\color[rgb]{#1}}\def\colorgray#1{\color[gray]{#1}}\expandafter\def\csname LTw\endcsname{\color{white}}\expandafter\def\csname LTb\endcsname{\color{black}}\expandafter\def\csname LTa\endcsname{\color{black}}\expandafter\def\csname LT0\endcsname{\color[rgb]{1,0,0}}\expandafter\def\csname LT1\endcsname{\color[rgb]{0,1,0}}\expandafter\def\csname LT2\endcsname{\color[rgb]{0,0,1}}\expandafter\def\csname LT3\endcsname{\color[rgb]{1,0,1}}\expandafter\def\csname LT4\endcsname{\color[rgb]{0,1,1}}\expandafter\def\csname LT5\endcsname{\color[rgb]{1,1,0}}\expandafter\def\csname LT6\endcsname{\color[rgb]{0,0,0}}\expandafter\def\csname LT7\endcsname{\color[rgb]{1,0.3,0}}\expandafter\def\csname LT8\endcsname{\color[rgb]{0.5,0.5,0.5}}\else
\def\colorrgb#1{\color{black}}\def\colorgray#1{\color[gray]{#1}}\expandafter\def\csname LTw\endcsname{\color{white}}\expandafter\def\csname LTb\endcsname{\color{black}}\expandafter\def\csname LTa\endcsname{\color{black}}\expandafter\def\csname LT0\endcsname{\color{black}}\expandafter\def\csname LT1\endcsname{\color{black}}\expandafter\def\csname LT2\endcsname{\color{black}}\expandafter\def\csname LT3\endcsname{\color{black}}\expandafter\def\csname LT4\endcsname{\color{black}}\expandafter\def\csname LT5\endcsname{\color{black}}\expandafter\def\csname LT6\endcsname{\color{black}}\expandafter\def\csname LT7\endcsname{\color{black}}\expandafter\def\csname LT8\endcsname{\color{black}}\fi
  \fi
    \setlength{\unitlength}{0.0500bp}\ifx\gptboxheight\undefined \newlength{\gptboxheight}\newlength{\gptboxwidth}\newsavebox{\gptboxtext}\fi \setlength{\fboxrule}{0.5pt}\setlength{\fboxsep}{1pt}\definecolor{tbcol}{rgb}{1,1,1}\begin{picture}(6800.00,1980.00)\gplgaddtomacro\gplbacktext{\csname LTb\endcsname \put(270,408){\makebox(0,0)[r]{\strut{}$0$}}\csname LTb\endcsname \put(270,1449){\makebox(0,0)[r]{\strut{}$1$}}\csname LTb\endcsname \put(382,204){\makebox(0,0){\strut{}$0$}}\csname LTb\endcsname \put(1424,204){\makebox(0,0){\strut{}$1$}}}\gplgaddtomacro\gplfronttext{\csname LTb\endcsname \put(903,1755){\makebox(0,0){\strut{}$m=2$ (underfitting)}}}\gplgaddtomacro\gplbacktext{\csname LTb\endcsname \put(1965,408){\makebox(0,0)[r]{\strut{}$0$}}\csname LTb\endcsname \put(1965,1449){\makebox(0,0)[r]{\strut{}$1$}}\csname LTb\endcsname \put(2077,204){\makebox(0,0){\strut{}$0$}}\csname LTb\endcsname \put(3119,204){\makebox(0,0){\strut{}$1$}}}\gplgaddtomacro\gplfronttext{\csname LTb\endcsname \put(2598,1755){\makebox(0,0){\strut{}$m=4$}}}\gplgaddtomacro\gplbacktext{\csname LTb\endcsname \put(3660,408){\makebox(0,0)[r]{\strut{}$0$}}\csname LTb\endcsname \put(3660,1449){\makebox(0,0)[r]{\strut{}$1$}}\csname LTb\endcsname \put(3772,204){\makebox(0,0){\strut{}$0$}}\csname LTb\endcsname \put(4813,204){\makebox(0,0){\strut{}$1$}}}\gplgaddtomacro\gplfronttext{\csname LTb\endcsname \put(4292,1755){\makebox(0,0){\strut{}$m=16$ (overfitting)}}}\gplgaddtomacro\gplbacktext{\csname LTb\endcsname \put(5411,408){\makebox(0,0)[r]{\strut{}}}\csname LTb\endcsname \put(5411,929){\makebox(0,0)[r]{\strut{}}}\csname LTb\endcsname \put(5411,1449){\makebox(0,0)[r]{\strut{}}}\csname LTb\endcsname \put(5467,204){\makebox(0,0){\strut{}}}\csname LTb\endcsname \put(5988,204){\makebox(0,0){\strut{}}}\csname LTb\endcsname \put(6508,204){\makebox(0,0){\strut{}}}}\gplgaddtomacro\gplfronttext{\csname LTb\endcsname \put(5987,223){\makebox(0,0){\strut{}$m$}}\csname LTb\endcsname \put(5987,1755){\makebox(0,0){\strut{}$\|f-S_m\bm y\|_{\mathrm L_2}^2$}}}\gplbacktext
    \put(0,0){\includegraphics[width={340.00bp},height={99.00bp}]{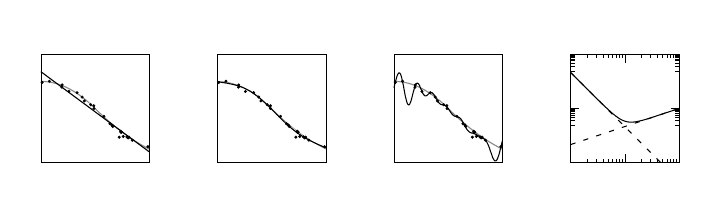}}\gplfronttext
  \end{picture}\endgroup
     \caption{One-dimensional approximation on the unit-interval for three different choices of $m$ and the schematic behaviour of the $L_2$-approximation error $\|f-S_m\bm y\|_{L_2}^2$ (solid line) split into the error for exact function values $\|f-S_m\bm f\|_{L_2}^2$ and the noise error $\|S_m\bm\varepsilon\|_{L_2}^2$ (dashed lines) with respect to $m$.}\label{fig:example_1d}
\end{figure}
Our central theorem, complying this behaviour, looks as follows:

\begin{theorem}\label{L2w} Let $f\colon D\to\mathds C$, $\bm x^1,\dots,\bm x^n$, $n\in\mathds N$ be points drawn according to a probability measure $\mathrm d\varrho_S = 1/\beta\,\mathrm d\varrho_T$ and $\bm y = \bm f + \bm\varepsilon = (f(\bm x^1)+\varepsilon_1, \dots, f(\bm x^n)+\varepsilon_n)\transp$ noisy function values where
    $\bm\varepsilon = (\varepsilon_1,\dots,\varepsilon_n)\transp$ is a vector of independent complex-valued mean-zero random variables satisfying $\mathds E(\lvert\varepsilon_i\rvert^2) \le \sigma^2$ and $\lvert\varepsilon_i\rvert \le B$ for $i=1,\dots,n$.
    Let further, $t\ge 0$, $V_m$ be an $m$-dimensional function space with an $L_2(D, \varrho_T)$-orthonormal basis $\eta_0, \dots, \eta_{m-1}$ satisfying
    \begin{align*}
        10\| \beta(\cdot) N(V_m, \cdot) \|_\infty (\log(m)+t) \le n
        \quad\text{with}\quad
        N(V_m, \cdot) = \sum_{k=0}^{m-1} \lvert\eta_k(\cdot)\rvert^2 \,.
    \end{align*}
    Then, for $S_m$ the weighted least squares method defined in \eqref{eq:lsqrmatrix} with $\omega_i = \beta(\bm x^i)$, we have with joint probability exceeding $1-3\exp(-t)$:
    \begin{align*}
        \| f - S_m \bm y \|_{L_2}^{2}
        &\le 14 \Big( e(f,V_m,L_2)_{L_2}+ \sqrt{\frac t n} e(f,V_m,L_2)_{L_\infty} \Big)^2 \\
        &\quad + 
        4\|\beta\|_\infty \Big(
        \frac{m}{n}
        \Big( 14 B \sqrt{t\sigma^2} +\sigma^2 \Big)
        +
        \frac{128 B^2 t}{n}
        \Big) \,,
    \end{align*}
    where $L_2 = L_2(D, \varrho_T)$ and $L_\infty = L_\infty(D, \varrho_T)$.
\end{theorem}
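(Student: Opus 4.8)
The plan is to start from the linear error splitting stated just before the theorem and treat its three pieces separately, spending the probability budget $3\exp(-t)$ as $2\exp(-t)$ on the combined truncation/discretization part and $\exp(-t)$ on the noise part. For the first two summands, orthogonality of $f-P(f,V_m,L_2)$ to $V_m$ gives, by Pythagoras, $\|f-S_m\bm f\|_{L_2}^2=e(f,V_m,L_2)_{L_2}^2+\|P(f,V_m,L_2)-S_m\bm f\|_{L_2}^2$, whence $e(f,V_m,L_2)_{L_2}^2+2\|P(f,V_m,L_2)-S_m\bm f\|_{L_2}^2\le 2\|f-S_m\bm f\|_{L_2}^2$. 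It therefore suffices to invoke the noiseless Theorem~\ref{L2wo}: since there $\bm\varepsilon=0$, it yields $\|f-S_m\bm f\|_{L_2}^2\le 7\bigl(e(f,V_m,L_2)_{L_2}+\sqrt{t/n}\,e(f,V_m,L_2)_{L_\infty}\bigr)^2$ on an event of probability at least $1-2\exp(-t)$, and doubling reproduces the first line of the present claim. This reuse also furnishes the stability event $\mathcal A=\{\|G-\Id\|_{2\to 2}\le 1/2\}$ for the weighted Gram matrix $G=\tfrac1n\sum_i\omega_i\bm\eta(\bm x^i)\bm\eta(\bm x^i)\herm$, where $\bm\eta(\bm x^i)=(\eta_k(\bm x^i))_{k=0}^{m-1}$; on $\mathcal A$ all eigenvalues of $G$ lie in $[1/2,3/2]$.

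Next I would isolate the noise term. Writing $\bm s_i=S_m\bm e_i\in V_m$ gives $S_m\bm\varepsilon=\sum_{i=1}^n\varepsilon_i\bm s_i$ and hence $\|S_m\bm\varepsilon\|_{L_2}^2=\bm\varepsilon\herm\Phi\bm\varepsilon$ with the Hermitian positive semidefinite Gram matrix $\Phi=(\langle\bm s_j,\bm s_i\rangle_{L_2})_{i,j}$. The two spectral quantities that drive the bound are $\trace\Phi=\sum_i\|\bm s_i\|_{L_2}^2=\|S_m\|_{\mathrm{HS}}^2$ and $\|\Phi\|_{2\to 2}=\|S_m\|_{2\to 2}^2$. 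Since the coefficient vector of $\bm s_i$ is $\tfrac{\omega_i}{n}G\inv\bm\eta(\bm x^i)$, I would estimate both on $\mathcal A$ through the operator inequality $\sum_i\omega_i^2\bm\eta(\bm x^i)\bm\eta(\bm x^i)\herm\preceq\|1/\varrho\|_\infty\,nG$ together with $\|G\inv\|_{2\to 2}\le 2$ and $\trace(G\inv)\le 2m$, yielding $\trace\Phi\le\tfrac{2m}{n}\|1/\varrho\|_\infty$ and $\|\Phi\|_{2\to 2}\le\tfrac{2}{n}\|1/\varrho\|_\infty$. The factor $\|4/\varrho\|_\infty=4\|1/\varrho\|_\infty$ in the statement is exactly twice this trace scale.

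Then comes the probabilistic heart. As $\bm\varepsilon$ is independent of the sample points, I would condition on a realization in $\mathcal A$ and treat $\bm\varepsilon\herm\Phi\bm\varepsilon$ as a quadratic form in independent, mean-zero, bounded variables. Its conditional mean is $\mathds E_{\bm\varepsilon}\|S_m\bm\varepsilon\|_{L_2}^2=\sum_i\mathds E|\varepsilon_i|^2\Phi_{ii}\le\sigma^2\trace\Phi$, producing the $t$-free variance summand $\tfrac{m}{n}\sigma^2$. A Bernstein/Hanson--Wright inequality for the deviation of $\bm\varepsilon\herm\Phi\bm\varepsilon$ from its mean, with Gaussian scale governed by $\max_i\mathds E|\varepsilon_i|^4\le B^2\sigma^2$ times $\|\Phi\|_{\mathrm{HS}}^2$ and exponential scale governed by $B^2\|\Phi\|_{2\to 2}$, then gives, with probability at least $1-\exp(-t)$, a deviation of order $B\sigma\|\Phi\|_{\mathrm{HS}}\sqrt t+B^2\|\Phi\|_{2\to 2}\,t$. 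Bounding crudely $\|\Phi\|_{\mathrm{HS}}\le\trace\Phi$ turns the first of these into the cross term $\tfrac{m}{n}B\sqrt{t\sigma^2}$, while $\|\Phi\|_{2\to 2}\le\tfrac2n\|1/\varrho\|_\infty$ turns the second into $\tfrac{B^2t}{n}$; the large-deviation term is automatically $m$-free precisely because it sees the operator norm rather than the trace. A union bound over the stability/discretization event (probability $2\exp(-t)$) and the conditional noise event (probability $\exp(-t)$, integrated against the sampling law restricted to $\mathcal A$) yields the total $3\exp(-t)$, and collecting the three contributions gives the stated inequality.

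I expect the main obstacle to be this probabilistic heart: selecting a quadratic-form concentration inequality valid for complex, merely bounded (not sub-Gaussian) noise that keeps the trace scale and the operator-norm scale separate, so that the variance term stays $t$-free at the trace scale $m/n$ while the large-deviation term remains at the operator-norm scale $1/n$ without picking up a spurious factor $m$. A secondary technical point is the bookkeeping of the conditioning: the noise estimate is only available on $\mathcal A$, so one must argue $\mathds P(\mathcal A\cap\{\|S_m\bm\varepsilon\|_{L_2}^2\text{ large}\})\le\exp(-t)$ by integrating the conditional tail bound against the distribution of the points restricted to $\mathcal A$, rather than asserting an unconditional noise bound.
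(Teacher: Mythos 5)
Your route coincides with the paper's own proof in all structural respects: the same orthogonal split into truncation, discretization and noise parts; the same spectral event (Lemma~\ref{l:trand} / event \eqref{eq:L2woA}) giving $\|S_m\|_{2\to 2}^2 \le \tfrac 2n\|1/\varrho\|_\infty$ after pulling out $\bm W^{1/2}$; the same two scales for the noise term, namely $\trace\Phi \le \tfrac{2m}{n}\|1/\varrho\|_\infty$ and $\|\Phi\|_{2\to 2}\le \tfrac 2n\|1/\varrho\|_\infty$ (the paper gets the trace bound via $\|\bm A\|_F^2\le\rank(\bm A)\|\bm A\|_{2\to2}^2$ rather than your operator inequality plus $\trace(G\inv)\le 2m$, but the outcome is identical); and the same union bound over three events. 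The ``probabilistic heart'' you worry about is already supplied by the paper as Corollary~\ref{hanson-wright} (a consequence of Bellec's Hanson--Wright inequality under a Bernstein moment condition), which keeps exactly the two scales you need, with explicit constants $128$ for the operator-norm term and $8\sqrt 3\le 14$ for the cross term; plugging in your trace and operator-norm bounds, together with the factor $2$ from the split, reproduces the second line of the theorem verbatim. Your extra care about conditioning on the point configuration (the matrix in the quadratic form is random but independent of $\bm\varepsilon$) is correct and is in fact glossed over in the paper.

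The one step that fails as written is the constant in the first line. Writing $e_2 = e(f,V_m,L_2)_{L_2}$ and $e_\infty = e(f,V_m,L_2)_{L_\infty}$, Theorem~\ref{L2wo} states the constant $8$, and its proof establishes $13/3+2\sqrt 2\approx 7.16$ --- not $7$. So your chain $e_2^2+2\|P(f,V_m,L_2)-S_m\bm f\|_{L_2}^2\le 2\|f-S_m\bm f\|_{L_2}^2$ followed by a black-box invocation of Theorem~\ref{L2wo} yields the constant $16$ (or $26/3+4\sqrt 2\approx 14.33$ if you use the sharper internal constant), which overshoots the claimed $14$. The paper avoids this loss precisely by \emph{not} discarding the $-e_2^2$ your Pythagoras identity produces: it reruns the computation from the proof of Theorem~\ref{L2wo} on the events \eqref{eq:L2woA} and \eqref{eq:L2woB}, obtaining $\|P(f,V_m,L_2)-S_m\bm f\|_{L_2}^2\le (10/3+2\sqrt 2)\,e_2^2+\sqrt{8t/n}\,e_\infty e_2+\tfrac{4t}{3n}e_\infty^2$, whence
\begin{align*}
    e_2^2+2\|P(f,V_m,L_2)-S_m\bm f\|_{L_2}^2
    \le \left(\tfrac{23}{3}+4\sqrt 2\right)\left(e_2+\sqrt{\tfrac tn}\,e_\infty\right)^2
    \le 14\left(e_2+\sqrt{\tfrac tn}\,e_\infty\right)^2 ,
\end{align*}
i.e.\ the constant becomes $2c-1$ with $c=13/3+2\sqrt 2$ rather than your $2c$. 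Replace the black-box citation by this intermediate estimate (everything else in your argument can stand), and your proof delivers the stated bound.
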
 

The first line of the bound corresponds to the truncation error and discretization error, decaying in $m$.
Note, that the $L_\infty$-term with the prefactor $n^{-1/2}$ behaves as the $L_2$-term whenever $\beta$ is bounded from below, cf.\ Theorem~\ref{L2wo}.
The second line is the error due to noise, increasing in $m$, cf.\ Figure~\ref{fig:example_1d}.
The estimation of the noise error is using a Hanson-Wright concentration inequality, which can be found using different assumptions.
Thus, we can replace the noise model by general Bernstein conditions, cf.\ Lemma~\ref{hanson-wright}, or sub-Gaussian noise, cf.\ \cite{RV13}.
This theorem extends to the $L_\infty$ case:

\begin{theorem}[$L_\infty$-error bound with noise]\label{Linfw}Let the assumptions of Theorem~\ref{L2w} hold.
    Then, for $S_m$ the weighted least squares method defined in \eqref{eq:lsqrmatrix} with $\omega_i = \beta(\bm x^i)$, we have with probability exceeding $1-3\exp(-t)$:
    \begin{align*}
        \| f - S_m \bm y \|_{ L_\infty}
        &\le \Big(1+\sqrt{5N(V_m)}\Big)
        \Big( e(f,V_m, L_\infty)_{ L_\infty}+ \sqrt{\frac t n} e(f,V_m, L_\infty)_{ L_2} \Big) \\
        &\quad +
        2\sqrt{\|\beta\|_\infty N(V_m)}
        \sqrt{
        \frac{m}{n}
        \Big( 14 B \sqrt{t\sigma^2} +\sigma^2 \Big)
        +
        \frac{128 B^2 t}{n}
        } \,.
    \end{align*}
\end{theorem} 

The bound is similar to \cite[Thm.~3.21]{LPU21} in the wavelet setting but we use the best approximation with respect to the more natural $L_\infty$ instead of $L_2$.
In addition to the error of the best approximation we now have the additional factor $N(V_m)$ due to using the norm estimate $\|g\|_{L_\infty} \le \sqrt{N(V_m)}\|g\|_{L_2}$ for functions $g\in V_m$.
The same factor appears when approximating the worst-case error where it is known to be necessary in various examples, e.g.\ \cite[Sec.~7]{PU21} or \cite[Thm~.1.1]{Temlyakov93}.

The sampling measure $ \varrho_S(E) \coloneqq \int_E 1/\beta \;d\varrho_T$, induced by the probability distribution $\beta$, may differ from the error measure $\varrho_T$, which is known as the \emph{change of measure} and has applications in domain adaptation, cf.~\cite{PY10}.
We assume to know $\beta$ exactly but it may be approximated as well, cf.\ \cite{GMMNPPSZ22}.
Note, $\beta$ affects the maximal size of $V_m$ in the assumption and the amplification of the noise in bound.
There are two extremal cases:
\begin{enumerate}[(i)]
\item
    Having $\beta(\bm x) = m/N(V_m, \bm x)$, as it was done in \cite{HD15, NJZ17, CM17, KUV21}, we obtain the assumption
    \begin{align*}
        10\| \beta(\cdot) N(V_m, \cdot) \|_\infty (\log(m)+t)
        = 10m (\log(m)+t)
        \le n \,,
    \end{align*}
    which allows for the biggest choice of $m$ possible.
    But this spoils $\|\beta\|_\infty$ in the error bound when the Christoffel function attains small values.
\item 
    For domains $D$ with bounded measure, we may choose $\beta(\bm x) = \varrho_T(D)$, as it was done in \cite{CDL13, CM17, LPU21}.
    As all weights $\omega_i = \varrho_T(D)$, $S_m$ becomes the plain least squares method.
    In this case, $\|\beta\|_\infty$ is minimal and noise is amplified the least.
    But this choice spoils the assumption on the choice of $m$ when the Christoffel function $N(V_m, \bm x)$ attains big values.
    This effect is controllable, for instance, when working with a bounded orthonormal system (BOS) ($\|\eta_k\|_\infty \le B$ for some $B>0$ and all $k$).
    Then
    \begin{align*}
        N(V_m)
        \le \sum_{k=0}^{m-1} \|\eta_k\|_\infty^2
        \le m B^2
    \end{align*}
    and the assumption on the size of $V_m$ can be replaced by
    \begin{align*}
        10\| \beta(\cdot) N(V_m, \cdot) \|_\infty (\log(m)+t)
        \le 10\varrho_T(D)Bm (\log(m)+t)
        \le n \,.
    \end{align*}
\end{enumerate}

An interesting example, where these effects occur, is the approximation of functions on the unit interval $D=[0,1]$ from samples given in uniformly random points.
\begin{itemize}
\item
    When using algebraic polynomials and the Lebesgue error measure $\mathrm d\varrho_T=\mathrm dx$ we have to choose $\beta \equiv 1$ to obtain uniform random points.
    Orthogonalizing algebraic polynomials with respect to the Lebesgue measure, we obtain $\eta_k = P_k/\|P_k\|_{L_2((0,1),\mathrm dx)}$ Legendre polynomials for our approximation space $V_m$.
    Since $\|P_k\|_{L_2((0,1),\mathrm dx)}^2 = 2k+1$ and $P_k(0)=1$, we have
    \begin{align}\label{eq:stupidlegendre}
        N(V_m, 0)
        = \sum_{k=0}^{m-1} \frac{\lvert P_k(0)\rvert^2}{\|P_k\|_{L_2}^2}
        = \sum_{k=0}^{m-1} (2k+1)
        = m^2\,.
    \end{align}
    Thus, this case falls into category (i) from above and spoils our choice of $m \le \sqrt{n}$, i.e., quadratic oversampling as in \cite{MNST14}.
\item
    When using algebraic polynomials and the Chebyshev error measure $\mathrm d\varrho_T=(1-(2x-1)^2)^{-1/2}\;\mathrm dx$ we have to choose $\beta(x) = \frac{\pi}{4} (1-(2x-1)^2)^{-1/2}$ to obtain uniform random points.
    Orthogonalizing algebraic polynomials with respect to the Chebyshev measure, we obtain Chebyshev polynomials $\eta_k(x) = T_k(x) = \cos(k\arccos(2x-1))$ for our approximation space $V_m$.
    These are a BOS, but the distribution $\beta$ spoils both the assumption on $m$ and the error bound, since $\beta$ diverges at the border (this effect can be circumvented using a padding technique at the border as we discuss in Section~\ref{sec:application}).
\item
    In Section~\ref{subs:hs} we construct orthogonal functions with respect to the Sobolev space inner product
    \begin{align*}
        \langle f, g\rangle_{H^s(0,1)}
        = \langle f, g\rangle_{L_2((0,1), \mathrm dx)}
        + \langle f^{(s)}, g^{(s)}\rangle_{L_2((0,1), \mathrm dx)} \,,
    \end{align*}
    for $s=1$ and $s=2$, which are orthogonal with respect to the $L_2((0,1), \mathrm dx)$ inner product as well.
    For $s=1$, these functions were originally introduced in \cite{Krein35} and for $s=2$ in \cite[Section 3]{AIN12}, where also higher orders can be found.

    We show that they form a BOS and, by (ii) above, this basis is then suitable for approximation in uniform random points on $D=[0,1]$ using plain least squares and only logarithmic oversampling.
    The $H^2$ basis is prone to numerical errors.
    To overcome this, we propose a numerically stable approximation and proof its accuracy.
\end{itemize}

As for the structure of this paper, we start with some tools from probability theory in Section~\ref{sec:prob}.
In Section~\ref{sec:weighted} we show error bounds for the weighted least squares method.
The construction of the $H^1$ and $H^2$ basis mentioned above are found in Section~\ref{sec:application} along with a comparison to the Legendre and Chebyshev polynomials.
To indicate the applicability of our error bounds and the proposed basis, we conduct numerical experiments in one and five dimensions.

 \section{Tools from probabilty theory}\label{sec:prob}

For the later analysis we need concentration inequalities starting with Bernstein's inequality, which is found in the standard literature, cf.\ \cite[Theorem 6.12]{SC08} or \cite[Corollary 7.31]{FR13}.

\begin{theorem}[Bernstein] Let $\xi_1,\dots,\xi_n$ be independent real-valued mean-zero random variables satisfying $\mathds E(\xi_i^2) \le \sigma^2$ and $\|\xi_i\|_\infty \le B$ for $i=1,\dots,n$ and real numbers $\sigma^2$ and $B$.
    Then
    \begin{align*}
        \frac 1n \sum_{i=1}^n \xi_i
        \le \frac{2B t}{3n} + \sqrt{\frac{2\sigma^2 t}{n}}
    \end{align*}
    with probability exceeding $1-\exp(-t)$.
\end{theorem}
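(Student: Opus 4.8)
The plan is to use the Cram\'er--Chernoff (exponential moment) method. First I would fix $\lambda>0$ and apply Markov's inequality to the exponentiated sum,
\[
    \mathds P\Bigl(\sum_{i=1}^n \xi_i \ge s\Bigr)
    \le e^{-\lambda s}\,\mathds E\,e^{\lambda\sum_{i=1}^n\xi_i}
    = e^{-\lambda s}\prod_{i=1}^n \mathds E\,e^{\lambda\xi_i} \,,
\]
where the factorization into a product uses the independence of the $\xi_i$. Everything then reduces to controlling the individual moment generating functions and afterwards choosing $\lambda$ well.

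The key step is to bound each factor $\mathds E\,e^{\lambda\xi_i}$. Expanding the exponential and using $\mathds E(\xi_i)=0$ together with the moment estimate $\lvert\mathds E(\xi_i^k)\rvert\le B^{k-2}\mathds E(\xi_i^2)\le B^{k-2}\sigma^2$ for $k\ge 2$ (a consequence of $\lvert\xi_i\rvert\le B$) and the elementary inequality $k!\ge 2\cdot 3^{k-2}$, I would sum the resulting geometric series to obtain, for $0<\lambda<3/B$,
\[
    \mathds E\,e^{\lambda\xi_i}
    \le 1 + \frac{\lambda^2\sigma^2/2}{1-\lambda B/3}
    \le \exp\Bigl(\frac{\lambda^2\sigma^2/2}{1-\lambda B/3}\Bigr) \,,
\]
the last step being $1+x\le e^x$. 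This is the one genuinely delicate computation; the rest is bookkeeping.

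Multiplying these bounds over $i=1,\dots,n$ gives $\mathds P(\sum_{i=1}^n\xi_i\ge s)\le \exp(-\lambda s + \tfrac{n\lambda^2\sigma^2/2}{1-\lambda B/3})$. The choice $\lambda = s/(n\sigma^2+Bs/3)$ then lies in $(0,3/B)$ and collapses the exponent exactly to the classical Bernstein tail $\exp(-\tfrac{s^2/2}{n\sigma^2+Bs/3})$. Setting this equal to $e^{-t}$ leads to the quadratic $s^2 - \tfrac{2Bt}{3}\,s - 2tn\sigma^2 = 0$; solving for its positive root and bounding it above via the subadditivity $\sqrt{a+b}\le\sqrt a+\sqrt b$ yields $s\le \tfrac{2Bt}{3}+\sqrt{2tn\sigma^2}$. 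Since enlarging the deviation only shrinks the tail, the probability at this $s$ is still at most $e^{-t}$, and dividing by $n$ gives the stated bound.

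The main obstacle is producing the dimension-free moment generating function bound with exactly the right constants---the $3$ appearing in $1-\lambda B/3$ and the factor $\tfrac12$ in front of $\lambda^2\sigma^2$---since these propagate directly into the constants $2/3$ and $2$ of the final statement, so any slack in the series estimate degrades the result. Once the Bernstein tail is in hand, the inversion from a tail probability to an explicit deviation is routine. As the result is classical, one may alternatively simply invoke \cite[Corollary 7.31]{FR13} or \cite[Theorem 6.12]{SC08}.
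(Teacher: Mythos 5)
Your proof is correct: the Cram\'er--Chernoff argument, the moment-generating-function bound $\mathds E\,e^{\lambda\xi_i}\le\exp\bigl(\tfrac{\lambda^2\sigma^2/2}{1-\lambda B/3}\bigr)$ obtained from $\lvert\mathds E(\xi_i^k)\rvert\le B^{k-2}\sigma^2$ and $k!\ge 2\cdot 3^{k-2}$, the choice $\lambda=s/(n\sigma^2+Bs/3)$, and the inversion of the resulting tail $\exp\bigl(-\tfrac{s^2/2}{n\sigma^2+Bs/3}\bigr)$ into the stated deviation bound all check out. The paper does not prove this theorem at all but imports it from the standard literature (cf.\ \cite[Corollary 7.31]{FR13} or \cite[Theorem 6.12]{SC08}), and your argument is precisely the classical proof found in those references, so this is essentially the same approach.
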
 

Bernstein's inequality gives a concentration bound for the sum of independent random variables.
We need similar bounds for quadratic forms in random vectors, which are known as Hanson-Wright inequalities.
To formulate them, we need to introduce the spectral norm and the Frobenius norm of a matrix $\bm A\in\mathds C^{m\times n}$
\begin{align*}
    \|\bm A\|_{2\to 2} = \sqrt{\lambda_{\max}(\bm A\herm \bm A)} = \sigma_{\max}(\bm A)
    \quad\text{and}\quad
    \|\bm A\|_F = \sqrt{\sum_{k=1}^{m}\sum_{i=1}^{n} \lvert a_{k,i}\rvert^2 } \,,
\end{align*}
where $\lambda_{\max}$ and $\sigma_{\max}$ denote the largest eigenvalues and singular values, respectively.
The following result is such an inequality with a Bernstein condition on the random variables and was shown in \cite[Theorem 3]{Bellec19}.

\begin{theorem}[Hanson-Wright]\label{bellec}Let $\bm\xi = (\xi_1,\dots,\xi_n)\transp$ be a vector of independent mean-zero random variables such that for all integers $p\ge 1$
    \begin{align}\label{bernsteincondition}
        \mathds E(\lvert\xi_i\rvert^{2p})
        \le p! B^{2p-2}\sigma_i^2/2
    \end{align}
    for real numbers $B\ge 0$, $\sigma_i \ge 0$, and all $i = 1,\dots,n$.
    Let further $\bm A \in\mathds C^{n\times n}$ and $m = \mathds E(\bm\xi\herm\bm A\bm\xi)$.
    Then $\bm D_\sigma = \diag(\sigma_1, \dots, \sigma_n)$
    \begin{align*}
        \bm\xi\herm\bm A\bm\xi - m
        \le 256 B^2 \|\bm A \bm D_\sigma\|_{2\to 2} t+8\sqrt 3 B \|\bm A \bm D_\sigma\|_F\sqrt{t}
    \end{align*}
    with probability exceeding $1-\exp(-t)$.
\end{theorem}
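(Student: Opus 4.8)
The plan is to bound the upper tail of $Z \coloneqq \bm\xi\herm\bm A\bm\xi - m$ by the Chernoff/Laplace-transform method, controlling $\mathds E\exp(\lambda Z)$ for $\lambda>0$ and then optimising the exponent, exactly in the spirit in which Bernstein's inequality above is obtained from a moment generating function estimate. First I would make two reductions. Since $\bm\xi\herm\bm A\bm\xi$ sees only the Hermitian part of $\bm A$, I replace $\bm A$ by $(\bm A+\bm A\herm)/2$ and assume $\bm A=\bm A\herm$ (so the quantity is real and the diagonal entries $a_{i,i}$ are real); for complex $\bm\xi$ I stack real and imaginary parts to reduce to the real case, noting that the Bernstein condition \eqref{bernsteincondition} and the two matrix norms are preserved up to absolute constants. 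Using that $\mathds E(\overline{\xi_i}\xi_j)=0$ for $i\neq j$, so that $m=\sum_i a_{i,i}\mathds E\lvert\xi_i\rvert^2$, I then split
\[
    Z = \sum_{i} a_{i,i}\bigl(\lvert\xi_i\rvert^2 - \mathds E\lvert\xi_i\rvert^2\bigr) + \sum_{i\neq j}\overline{\xi_i}\,a_{i,j}\,\xi_j \eqqcolon Z_{\mathrm{diag}} + Z_{\mathrm{off}} \,.
\]

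The diagonal term is tailor-made for the hypothesis: each $\lvert\xi_i\rvert^2$ satisfies $\mathds E\lvert\xi_i\rvert^{2p}\le p!\,(B^2)^{p-1}\sigma_i^2/2$, which is exactly the moment input of the standard Bernstein moment-to-MGF lemma applied to the variable $\lvert\xi_i\rvert^2$. This yields a sub-exponential bound for $a_{i,i}(\lvert\xi_i\rvert^2-\mathds E\lvert\xi_i\rvert^2)$ with variance proxy of order $a_{i,i}^2\sigma_i^2 B^2$ and scale $\lvert a_{i,i}\rvert B^2$, valid for $\lvert\lambda a_{i,i}\rvert B^2<1$. Summing over $i$ by independence, the variance proxies accumulate to $\sum_i a_{i,i}^2\sigma_i^2 B^2\le B^2\|\bm A\bm D_\sigma\|_F^2$ and the scale is $B^2\max_i\lvert a_{i,i}\rvert \le B^2\|\bm A\bm D_\sigma\|_{2\to2}/\min_i\sigma_i$; that is, the diagonal contribution already carries the Frobenius norm in its $\sqrt t$ term and the spectral norm in its linear-$t$ term.

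For $Z_{\mathrm{off}}$ I would invoke a \emph{decoupling} inequality to replace it, at the cost of a universal constant, by the decoupled chaos $\sum_{i,j}\overline{\xi_i}\,a_{i,j}\,\xi_j'$ with $\bm\xi'$ an independent copy of $\bm\xi$. Conditioning on $\bm\xi'$, this becomes a linear form $\sum_i\overline{\xi_i}(\bm A\bm\xi')_i$ in the independent variables $\xi_i$, to which the Bernstein MGF bound applies with conditional variance $\sum_i\sigma_i^2\lvert(\bm A\bm\xi')_i\rvert^2=\|\bm D_\sigma\bm A\bm\xi'\|_2^2$ and scale $B\max_i\lvert(\bm A\bm\xi')_i\rvert$. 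Integrating the conditional MGF over $\bm\xi'$ — where $\|\bm D_\sigma\bm A\bm\xi'\|_2^2$ is itself a quadratic form governed by $\|\bm A\bm D_\sigma\|_{2\to2}^2$ and $\|\bm A\bm D_\sigma\|_F^2$ — again produces the two target norms. Finally I multiply the MGF bounds for $Z_{\mathrm{diag}}$ and the decoupled $Z_{\mathrm{off}}$, apply Markov's inequality $\mathds P(Z\ge u)\le e^{-\lambda u}\mathds E e^{\lambda Z}$, and optimise $\lambda$: the small-$\lambda$ regime gives the $\|\bm A\bm D_\sigma\|_F\sqrt t$ term and the saturated regime at the boundary of validity gives the $\|\bm A\bm D_\sigma\|_{2\to2}\,t$ term, which upon setting the probability to $1-\exp(-t)$ yields the stated form (with the explicit constants $256$ and $8\sqrt3$ tracked in \cite{Bellec19}).

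The main obstacle is the off-diagonal term. Decoupling is lossy in the constants, and after conditioning one must re-estimate the \emph{random} conditional variance $\|\bm D_\sigma\bm A\bm\xi'\|_2^2$, which re-introduces a quadratic form and risks a circular blow-up unless the integration over $\bm\xi'$ is handled carefully (e.g.\ by truncation or an iterated/bootstrapped MGF argument). Equally delicate is the bookkeeping that keeps the spectral and Frobenius norms of the \emph{asymmetric} matrix $\bm A\bm D_\sigma$ separate throughout — rather than prematurely collapsing them to $\|\bm A\|_{2\to2}$ — since it is precisely this separation that encodes the heterogeneous variances $\sigma_i$ and produces the sharp two-regime bound. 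This is the content carried out in detail in \cite[Theorem~3]{Bellec19}, which we invoke here.
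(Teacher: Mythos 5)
The paper gives no proof of this theorem at all: it is imported as an external result with the single remark that it ``was shown in \cite[Theorem~3]{Bellec19}'', and your proposal ends by invoking exactly the same reference, so the two coincide in the only respect that matters. Your preceding sketch (diagonal/off-diagonal split, Bernstein moment-to-MGF bound for the diagonal, decoupling plus conditional MGF for the off-diagonal chaos, then Chernoff with a two-regime optimisation) is a faithful outline of the standard machinery behind such inequalities, but since both you and the paper delegate all details — including the tracking of the constants $256$ and $8\sqrt 3$ and of the norms of $\bm A\bm D_\sigma$ — to \cite{Bellec19}, it carries no independent load.
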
 

The following is a special case of the above Hanson-Wright inequality for Hermitian positive semi-definite matrices and random variables with known variance $\mathds E(\lvert\xi_i\rvert^2)$ and a uniform bound $\|\xi_i\|_\infty$.

\begin{corollary}\label{hanson-wright} Let $\bm\xi = (\xi_1,\dots,\xi_n)\transp$ be a vector of independent complex-valued mean-zero random variables satisfying $\mathds E(\lvert\xi_i\rvert^2) \le \sigma^2$ and $\|\xi_i\|_\infty \le B$ for $i=1,\dots,n$.
    Then for all $\bm A \in\mathds C^{m\times n}$
    \begin{align*}
        \|\bm A\bm\xi\|_2^2
        \le 128 B^2 \|\bm A\|_{2\to 2}^2 t+(8\sqrt 3 B\sqrt{t\sigma^2} + \sigma^2 )\|\bm A\|_F^2
    \end{align*}
    with probability exceeding $1-\exp(-t)$.
\end{corollary}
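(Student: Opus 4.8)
The plan is to recognize $\|\bm A\bm\xi\|_2^2$ as a quadratic form in $\bm\xi$ and apply the Hanson--Wright inequality of Theorem~\ref{bellec}. Setting $\bm M \coloneqq \bm A\herm\bm A \in \mathds C^{n\times n}$, which is Hermitian and positive semi-definite, we have $\|\bm A\bm\xi\|_2^2 = \bm\xi\herm\bm M\bm\xi$, so the claim is precisely an upper tail bound for $\bm\xi\herm\bm M\bm\xi$. The whole argument then consists of (a) verifying the moment hypothesis~\eqref{bernsteincondition} for the given $\xi_i$, (b) identifying the expectation $\mathds E(\bm\xi\herm\bm M\bm\xi)$, and (c) rewriting the spectral and Frobenius norms of $\bm M$ in terms of those of $\bm A$.

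The step I regard as the crux is (a): certifying the full Bernstein hierarchy~\eqref{bernsteincondition} from the two comparatively weak assumptions $\mathds E(\lvert\xi_i\rvert^2)\le\sigma^2$ and $\|\xi_i\|_\infty\le B$. Here I would exploit the almost-sure bound pointwise, writing $\lvert\xi_i\rvert^{2p} = \lvert\xi_i\rvert^2\,\lvert\xi_i\rvert^{2p-2}\le B^{2p-2}\lvert\xi_i\rvert^2$ for every integer $p\ge1$, and taking expectations to obtain $\mathds E(\lvert\xi_i\rvert^{2p})\le B^{2p-2}\sigma^2\le p!\,B^{2p-2}(2\sigma^2)/2$, the last inequality using only $p!\ge1$. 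Thus~\eqref{bernsteincondition} holds uniformly with variance proxy $\sigma_i^2 = 2\sigma^2$, i.e.\ $\bm D_\sigma = \sqrt2\,\sigma\,\Id$.

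For (b) I would use independence and the mean-zero property: the off-diagonal terms of $\mathds E(\bm\xi\herm\bm M\bm\xi)=\sum_{i,j}M_{ij}\,\mathds E(\overline{\xi_i}\xi_j)$ vanish, leaving $\sum_{i=1}^n M_{ii}\,\mathds E(\lvert\xi_i\rvert^2)\le\sigma^2\trace(\bm M)=\sigma^2\trace(\bm A\herm\bm A)=\sigma^2\|\bm A\|_F^2$, which supplies the additive $\sigma^2\|\bm A\|_F^2$ summand of the claimed bound.

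Finally, for (c) I would feed $\bm M$ and $\bm D_\sigma=\sqrt2\,\sigma\,\Id$ into Theorem~\ref{bellec}. Since $\bm D_\sigma$ is a scalar multiple of the identity it factors out of both norms, and the two norms of $\bm M$ reduce to norms of $\bm A$: one has $\|\bm M\|_{2\to2}=\sigma_{\max}(\bm A\herm\bm A)=\|\bm A\|_{2\to2}^2$, and, because $\bm M$ is positive semi-definite with eigenvalues $\lambda_1,\dots,\lambda_n\ge0$, $\|\bm M\|_F=(\sum_i\lambda_i^2)^{1/2}\le\sum_i\lambda_i=\trace(\bm M)=\|\bm A\|_F^2$. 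Substituting these, adding the expectation from (b), and collecting the numerical constants from Theorem~\ref{bellec} yields a bound of the stated shape. I expect the only delicate part to be the constant bookkeeping---keeping the factor $\sqrt2\,\sigma$ coming from $\bm D_\sigma$ and the constants $256$ and $8\sqrt3$ of Theorem~\ref{bellec} aligned with the target constants $128$ and $8\sqrt3$---which is the main, essentially computational, obstacle.
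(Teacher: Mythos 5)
Your overall route is exactly the paper's: write $\|\bm A\bm\xi\|_2^2 = \bm\xi\herm(\bm A\herm\bm A)\bm\xi$, apply Theorem~\ref{bellec} to $\bm A\herm\bm A$, bound the expectation by $\sigma^2\trace(\bm A\herm\bm A)=\sigma^2\|\bm A\|_F^2$ using independence and mean-zero, and convert norms via $\|\bm A\herm\bm A\|_{2\to 2}=\|\bm A\|_{2\to 2}^2$ and $\|\bm A\herm\bm A\|_F\le\trace(\bm A\herm\bm A)=\|\bm A\|_F^2$. Steps (b) and (c) are correct and coincide with what the paper does (the Frobenius conversion is left implicit there).

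The gap is in step (a), and it is not mere bookkeeping: it is precisely what determines whether you can reach the stated constants. You verify \eqref{bernsteincondition} with parameters $(\tilde B,\tilde\sigma_i^2)=(B,2\sigma^2)$, using only $p!\ge 1$. Feeding these into Theorem~\ref{bellec} produces a leading coefficient $256\,\tilde B^2 = 256\,B^2$ on the spectral-norm term and $8\sqrt3\,\tilde B\,\tilde\sigma_i = 8\sqrt3\cdot\sqrt2\,B\sigma = 8\sqrt6\,B\sigma$ on the Frobenius term, i.e.\ you prove the corollary with $256$ and $8\sqrt 6$ in place of the claimed $128$ and $8\sqrt 3$; no amount of rearranging afterwards recovers the factor $2$ (resp.\ $\sqrt2$). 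The paper's resolution is a sharper verification of \eqref{bernsteincondition}: check $p=1$ directly, and for $p\ge 2$ use $p!\ge 2^{p-1}$ to write
\begin{align*}
    \mathds E(\lvert\xi_i\rvert^{2p})
    \le B^{2p-2}\sigma^2
    \le p!\left(\frac{B}{\sqrt2}\right)^{2p-2}\frac{(\sqrt2\sigma)^2}{2}\,,
\end{align*}
so that the Bernstein condition holds with the \emph{smaller} scale $\tilde B = B/\sqrt 2$ (and the same $\tilde\sigma_i^2=2\sigma^2$). Then $256\,\tilde B^2 = 128\,B^2$ and $8\sqrt3\,\tilde B\,\tilde\sigma_i = 8\sqrt3\,B\sigma$, which is exactly the statement. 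In short: the excess power of $B$ in $\mathds E(\lvert\xi_i\rvert^{2p})\le B^{2p-2}\sigma^2$ relative to the factorial growth $p!$ must be spent on shrinking $B$, not discarded, and your proposal as written does not do this, so it falls short of the claimed inequality by a constant factor.
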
 

\begin{proof} Since $\|\bm A\bm\xi\|_2^2 = \bm\xi\herm\bm A\herm\bm A\bm\xi$ is a quadratic form we want to apply Theorem~\ref{bellec} on $\bm A\herm\bm A$.
    For that we check the moment condition \eqref{bernsteincondition} on $\xi_1^2, \dots, \xi_n^2$.
    For $p=1$ it is fulfilled for constants $B/\sqrt 2$ and $(\sqrt 2\sigma_i)^2$.
    For $p\ge 2$, we have $p! \ge 2^{p-1}$ and obtain
    \begin{align*}
        \mathds E(\lvert a_i\xi_i\rvert^{2p})
        &\le \|\xi_i\|_\infty^{2p-2}\mathds E(\lvert\xi_i\rvert^2) \\
        &\le (B)^{2p-2} \sigma^2 \\
        &\le p! \Big(\frac{B}{\sqrt{2}}\Big)^{2p-2} \frac{(\sqrt 2\sigma)^2}{2} \,.
    \end{align*}
    Therefore, Theorem~\ref{bellec} is applicable.

    It is left to estimate the expected value.
    Since $\xi_1,\dots,\xi_n$ are independent and have bounded variance, we obtain
    \begin{align*}
        \mathds E\Big(\|\bm A\bm\xi\|_2^2\Big)
        &= \sum_{k=1}^{m}
        \sum_{i=1}^{n} \sum_{j=1}^{n} a_{i,k}\overline{a_{j,k}} \, \mathds E(\xi_i \overline{\xi_j}) \\
        &= \sum_{k=1}^{m}
        \Big(\sum_{i=1}^{n} \sum_{j\neq i} a_{i,k}\overline{a_{j,k}} \, \mathds E( \xi_i \overline{\xi_j})\Big)
        + \sum_{i=1}^{n} \lvert a_{i,k}\rvert^2\mathds E(\lvert\xi_i\rvert^2) \\
        &\le \sigma^2 \|\bm A\|_F^2 \,.\qedhere
    \end{align*}
\end{proof}

The following tool is a concentration bound on the maximal singular values of random matrices which was shown in \cite[Theorem~1.1]{Tr12}.

\begin{lemma}[Matrix Chernoff]\label{matrixchernoff} For a finite sequence $\bm A_1, \dots, \bm A_n \in \mathds {C}^{m \times m}$ of independent, Hermitian, positive semi-definite random matrices satisfying $\lambda_{\max}(\bm A_i) \le R$ almost surely it holds
    \begin{align*}
        \mathds P\Big(
            \lambda_{\min}\Big( \sum_{i=1}^{n} \bm A_i \Big) \le (1-t)\mu_{\min}
        \Big)
        &\le {m}\exp\Big(-\frac{\mu_{\min}}{R} (t+(1-t)\log(1-t)) \Big) \\
        &\le {m}\exp\Big(-\frac{\mu_{\min}t^2}{2R} \Big)
    \end{align*}
    and
    \begin{align*}
        \mathds P\Big(
            \lambda_{\max}\Big( \sum_{i=1}^{n} \bm A_i \Big) \ge (1+t)\mu_{\max}
        \Big)
        &\le {m}\exp\Big(-\frac{\mu_{\max}}{R} (-t+(1+t)\log(1+t)) \Big) \\
        &\le {m}\exp\Big(-\frac{\mu_{\max}t^2}{3R} \Big) 
    \end{align*}
    for $t\in[0,1]$ where $\mu_{\min} \coloneqq \lambda_{\min} (\sum_{i=1}^{n} \mathds E (\bm A_i))$ and $\mu_{\max} \coloneqq \lambda_{\max} (\sum_{i=1}^{n} \mathds E (\bm A_i))$.
\end{lemma}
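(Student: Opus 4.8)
The plan is to follow the matrix Laplace transform method of Ahlswede--Winter and Tropp; since the statement is exactly \cite[Theorem~1.1]{Tr12}, I reconstruct that argument. The starting point is the Markov-type bound
\[
    \mathds P\bigl(\lambda_{\max}(\bm Y)\ge s\bigr)
    \le \inf_{\theta>0} e^{-\theta s}\,\mathds E\,\trace\, e^{\theta\bm Y},
\]
valid for any random Hermitian $\bm Y$, together with its mirror image for $\lambda_{\min}$ obtained from $\lambda_{\min}(\bm Y)=-\lambda_{\max}(-\bm Y)$ and the choice $\theta<0$. Applying this to $\bm Y=\sum_{i=1}^n\bm A_i$, I would first invoke subadditivity of the matrix cumulant generating function,
\[
    \mathds E\,\trace\exp\Bigl(\theta\textstyle\sum_{i=1}^n\bm A_i\Bigr)
    \le \trace\exp\Bigl(\textstyle\sum_{i=1}^n \log\mathds E\, e^{\theta\bm A_i}\Bigr),
\]
which rests on Lieb's concavity theorem and decouples the summands.

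Next I would bound each factor. Since $\bm A_i$ is positive semi-definite with $\lambda_{\max}(\bm A_i)\le R$, its spectrum lies in $[0,R]$, and convexity of $x\mapsto e^{\theta x}$ on that interval gives the semidefinite estimate $e^{\theta\bm A_i}\preceq \bm I+\frac{e^{\theta R}-1}{R}\bm A_i$ for every real $\theta$. Taking expectations and using $\bm I+\bm X\preceq e^{\bm X}$ yields $\mathds E\,e^{\theta\bm A_i}\preceq\exp\bigl(\frac{e^{\theta R}-1}{R}\mathds E\,\bm A_i\bigr)$, so by operator monotonicity of the logarithm $\sum_i\log\mathds E\,e^{\theta\bm A_i}\preceq\frac{e^{\theta R}-1}{R}\sum_i\mathds E\,\bm A_i$. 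For the maximum bound take $\theta>0$: the coefficient is positive, so the largest eigenvalue of the right-hand matrix is $\frac{e^{\theta R}-1}{R}\mu_{\max}$, the trace-exponential is at most $m\exp\bigl(\frac{e^{\theta R}-1}{R}\mu_{\max}\bigr)$, and
\[
    \mathds P\bigl(\lambda_{\max}\ge(1+t)\mu_{\max}\bigr)
    \le m\exp\Bigl(-\theta(1+t)\mu_{\max}+\tfrac{e^{\theta R}-1}{R}\mu_{\max}\Bigr).
\]
Minimizing the scalar exponent over $\theta>0$, with optimizer $\theta=R^{-1}\log(1+t)$, produces the first line of the claimed $\lambda_{\max}$ estimate with exponent $-\frac{\mu_{\max}}{R}\bigl(-t+(1+t)\log(1+t)\bigr)$.

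The $\lambda_{\min}$ bound is identical with $\theta<0$: now $\frac{e^{\theta R}-1}{R}<0$, so the relevant largest eigenvalue of the scaled mean matrix equals $\frac{e^{\theta R}-1}{R}\mu_{\min}$, and optimizing over $\theta<0$ (optimizer $\theta=R^{-1}\log(1-t)$) gives exponent $-\frac{\mu_{\min}}{R}\bigl(t+(1-t)\log(1-t)\bigr)$. Finally, the coarser sub-Gaussian forms follow from the elementary scalar inequalities $(1+t)\log(1+t)-t\ge t^2/3$ and $t+(1-t)\log(1-t)\ge t^2/2$ on $[0,1]$, each checked by noting the difference and its first derivative vanish at $t=0$ while its second derivative is nonnegative there. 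The main obstacle I anticipate is the subadditivity step: it is the only non-elementary ingredient and is precisely what yields the sharp exponents, whereas a cruder Golden--Thompson or Ahlswede--Winter argument would sidestep Lieb's theorem at the cost of degrading the constant in the exponent.
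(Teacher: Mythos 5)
Your reconstruction of the sharp estimates is correct, but note that it does substantially more than the paper: the paper's proof simply cites \cite[Theorem~1.1]{Tr12} for the first inequality in each display, and its actual content is only the reduction to the quadratic exponents, carried out by expanding $(1+t)\log(1+t)=t+\sum_{k\ge 2}\frac{(-1)^k}{k(k-1)}t^k$ and truncating the series. Your matrix-Laplace-transform argument --- the Markov bound via $\mathds E\,\trace\,e^{\theta \bm Y}$, subadditivity through Lieb's concavity theorem, the chord bound $e^{\theta\bm A_i}\preceq \bm I+\frac{e^{\theta R}-1}{R}\bm A_i$ for spectra contained in $[0,R]$, operator monotonicity of the logarithm, and optimization at $\theta=R^{-1}\log(1\pm t)$ --- is exactly Tropp's proof, and every step is sound, including the sign discussion that turns the largest eigenvalue of the scaled mean matrix into $\frac{e^{\theta R}-1}{R}\mu_{\min}$ when $\theta<0$. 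So for the first lines of both displays you give a self-contained proof where the paper gives a citation; that is a legitimate, more detailed route.

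The genuine gap is in your last step. You claim both scalar inequalities follow because ``the difference and its first derivative vanish at $t=0$ while its second derivative is nonnegative there.'' For $f(t)=(1+t)\log(1+t)-t-\frac{t^2}{3}$ this fails: $f''(t)=\frac{1}{1+t}-\frac{2}{3}$ is negative for $t>\frac12$, so $f$ is not convex on $[0,1]$; and nonnegativity of $f''$ at the single point $t=0$ can never yield a global inequality (consider $\frac{t^2}{2}-t^4$). The inequality itself is true, and two repairs are available: either argue via the first derivative, $f'(t)=\log(1+t)-\frac{2t}{3}\ge t\left(\log 2-\frac23\right)\ge 0$ on $[0,1]$ by concavity of $\log(1+t)$, so $f$ is nondecreasing and $f\ge f(0)=0$; or do what the paper does, namely $-t+(1+t)\log(1+t)=\sum_{k\ge2}\frac{(-1)^k}{k(k-1)}t^k\ge\frac{t^2}{2}-\frac{t^3}{6}\ge\frac{t^2}{3}$, using the alternating-series truncation and $t^3\le t^2$ on $[0,1]$. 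Your check is fine for the other inequality, since $\frac{d^2}{dt^2}\left[t+(1-t)\log(1-t)-\frac{t^2}{2}\right]=\frac{t}{1-t}\ge 0$ on all of $[0,1)$, so the convexity argument goes through in that case.
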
 

\begin{proof} The first estimates are provided by \cite[Theorem 1.1]{Tr12}. Based on the Taylor expansion
    \begin{align*}
    (1+t)\log(1+t) = t + \sum_{k=2}^{\infty} \frac{(-1)^k}{k(k-1)} t^k    \,,
    \end{align*}
    which holds true for $t\in[-1,1]$, we further derive the inequalities
    \begin{flalign*}
     && t+(1-t)\log(1-t) &= \sum_{k=2}^{\infty} \frac{1}{k(k-1)} t^k \ge \frac{t^2}{2}    && \\
    \text{and} &&    -t+(1+t)\log(1+t) &=    \sum_{k=2}^{\infty} \frac{(-1)^k}{k(k-1)} t^k \ge \frac{t^2}{2} - \frac{t^3}{6} \ge    \frac{t^2}{3} &&
    \end{flalign*}
    for the range $t\in[0,1]$.
\end{proof}

 \section{Error bounds for least squares}\label{sec:weighted}
In this section we develop $ L_2$- and $ L_\infty$-error bounds for the least squares method.
To this end we introduce some notation and the method itself.
Let $\eta_0, \dots, \eta_{m-1} \colon D\to\mathds C$ be an $L_2$-orthonormal basis of $V_m$,
\begin{align*}
    N(V_m, \bm x)
    = \sum_{k=0}^{m-1} \lvert\eta_k(\bm x)\rvert^2
    \quad\text{and}\quad
    N(V_m)
    = \sup_{\bm x\in D} N(V_m, \bm x)
\end{align*}
be the \emph{Christoffel function} and its supremum.
For our approximation method $S_m$ we use the \emph{weighted least squares approximation} depending on $\eta_0, \dots, \eta_{m-1}$ and $\bm x ^1, \dots, \bm x^n$:
\begin{align}
    (S_m \bm y)(\bm x)
    = \sum_{k=0}^{m-1}\hat g_k \eta_k(\bm x)
    \quad\text{with}\quad
    &\bm{\hat g} = \argmin_{\bm{\hat a}\in\mathds C^m} \| \bm L \bm{\hat a} - \bm y \|_{\bm W}^2 \,, \nonumber \\
    \bm L = \begin{bmatrix}\eta_0(\bm x^1) & \dots & \eta_{m-1}(\bm x^1) \\
    \vdots & \ddots & \vdots \\
    \eta_0(\bm x^n) & \dots & \eta_{m-1}(\bm x^n)\end{bmatrix} \in \mathds C^{n\times m}
    &\text{, and}\quad
    \bm W = \begin{bmatrix}\omega_1\\&\ddots\\&&\omega_n\end{bmatrix} \in \mathds [0, \infty)^{n\times n}
    \label{eq:lsqrmatrix}
\end{align}
where $\|\bm L\bm{\hat a}-\bm y\|_{\bm W}^2 = (\bm L\bm{\hat a}-\bm y)\herm\bm W(\bm L\bm{\hat a}-\bm y)$.
If all weights are equal we speak of \emph{plain least squares approximation}.

The coefficients $\bm{\hat g}$ of the approximation $S_m\bm y$ are found by solving the normal equation
\begin{align*}
    \bm{\hat g} 
    = (\bm L\herm\bm W\bm L)\inv\bm L\herm\bm W\bm y \,.
\end{align*}
Doing this by the means of an iterative solver, the stability and the iteration count for a desired precision are determined by the spectral properties of the above matrix, cf.\ \cite[Theorem~3.1.1]{Greenbaum97}.
However, these are fully determined by the spectral properties of $\bm W^{1/2}\bm L$, since for a singular value decomposition $\bm W^{1/2}\bm L = \bm U\bm\Sigma\bm V\herm$, we obtain
\begin{align}\label{eq:svd}
    (\bm L\herm\bm W\bm L)\inv\bm L\herm\bm W^{1/2}
    = \bm V (\bm\Sigma\herm\bm\Sigma)\inv \bm\Sigma\bm U\herm \,.
\end{align}
For $f = \sum_{k=0}^{m-1} \hat f_k \eta_k \in V_m$, the singular values of $\bm W^{1/2}\bm L$ relate the coefficients $\bm{\hat f} = (\hat f_0,\dots,\hat f_{m-1})\transp$ with the samples $\bm f = (f(\bm x^1), \dots, f(\bm x^n))\transp = \bm L\bm{\hat f}$.
Such connection is known as $L_2$-Marcinkiewicz-Zygmund inequality for $V_m$.
It was established in \cite[Thm.~2.1]{CM17} that random points also fulfill this, which is central in all theorems presented.
This makes it applicable in a very general setting, cf.\ \cite[Theorem~2.3]{NSU21}, \cite[Theorem~5.1]{MU21}, \cite[Lemma~2.1]{DC22a}, or \cite[Theorem~2.1]{BSU22}.

\begin{lemma}\label{l:trand} Let $t\ge 0$, $n\in\mathds N$, $\bm x^1,\dots,\bm x^n$ be points drawn according to a probability measure $\mathrm d\varrho_S = 1/\beta\,\mathrm d\varrho_T$.
    Let further, $V_m$ be an $m$-dimensional function space with an orthonormal basis $\eta_0, \dots, \eta_{m-1}$ in $ L_2$ with $m$ satisfying
    \begin{align*}
        10\| \beta(\cdot) N(V_m, \cdot) \|_\infty (\log(m)+t) \le n
    \end{align*}
    and $\bm L, \bm W$ be as in \eqref{eq:lsqrmatrix} with $\omega_i = \beta(\bm x^i)$.
    Then
    \begin{align*}
        \frac n2 \|\bm{\hat g}\|_2^2
        \le \|\bm W^{1/2}\bm L\bm{\hat g}\|_2^2
        \le \frac{3n}{2} \|\bm{\hat g}\|_2^2
        \quad\text{for all}\quad
        \bm{\hat g}\in\mathds C^m,
    \end{align*}
    where each inequality holds with probability exceeding $1-\exp(-t)$, repectively.
\end{lemma}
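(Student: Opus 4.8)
The plan is to recast both inequalities as eigenvalue bounds on the Gram-type matrix $\bm G = \bm L\herm \bm W \bm L$ and then invoke the Matrix Chernoff inequality, Lemma~\ref{matrixchernoff}. Writing $\bm\eta(\bm x) = (\eta_0(\bm x),\dots,\eta_{m-1}(\bm x))\transp$, I observe that $\|\bm W^{1/2}\bm L\bm{\hat g}\|_2^2 = \bm{\hat g}\herm \bm G\bm{\hat g}$, so the claimed two-sided estimate holds for \emph{every} $\bm{\hat g}\in\mathds C^m$ if and only if $\lambda_{\min}(\bm G)\ge n/2$ and $\lambda_{\max}(\bm G)\le 3n/2$. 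The advantage of this reformulation is that, with $\omega_i = 1/\varrho(\bm x^i)$,
\[
    \bm G = \sum_{i=1}^n \bm A_i, \qquad \bm A_i = \frac{1}{\varrho(\bm x^i)}\,\bm\eta(\bm x^i)\bm\eta(\bm x^i)\herm ,
\]
is a sum of independent (since the $\bm x^i$ are drawn i.i.d.), Hermitian, rank-one, positive semi-definite random matrices, which is exactly the structure Lemma~\ref{matrixchernoff} requires.

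Next I would verify the two inputs of that lemma. For the almost-sure bound $R$, since $\bm A_i$ has rank one its only nonzero eigenvalue equals its trace, whence $\lambda_{\max}(\bm A_i) = \varrho(\bm x^i)\inv\|\bm\eta(\bm x^i)\|_2^2 = N(V_m,\bm x^i)/\varrho(\bm x^i) \le \|N(V_m,\cdot)/\varrho(\cdot)\|_\infty =: R$ almost surely. For the expectation, the change of measure is what makes things cancel: the $(k,\ell)$-entry of $\mathds E(\bm A_i)$ is $\int_D \varrho(\bm x)\inv\eta_k(\bm x)\overline{\eta_\ell(\bm x)}\,\varrho(\bm x)\,\mathrm d\mu(\bm x)=\int_D \eta_k\overline{\eta_\ell}\,\mathrm d\mu=\delta_{k\ell}$ by $L_2(D,\mu)$-orthonormality, so $\mathds E(\bm A_i)=\Id$ and therefore $\mu_{\min}=\mu_{\max}=n$.

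I would then apply both tail bounds with deviation parameter $t=1/2$, matching $(1-1/2)n=n/2$ and $(1+1/2)n=3n/2$, and check that the failure probabilities fall below $\exp(-t)$ using the hypothesis $n\ge 10R(\log m+t)$, i.e.\ $n/R\ge 10(\log m + t)$. For the lower tail the relaxed exponent already suffices: $m\exp(-\mu_{\min}(1/2)^2/(2R)) = m\exp(-n/(8R))\le m^{-1/4}\exp(-5t/4)\le\exp(-t)$.

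The delicate point, and the one I would flag as the main obstacle, is the upper tail, where the convenient relaxation $-t+(1+t)\log(1+t)\ge t^2/3$ is too lossy: it yields only the exponent $n/(12R)$ and hence a bound $m^{1/6}\exp(-5t/6)$, which is \emph{not} controlled by $\exp(-t)$. Instead one must retain the exact Chernoff exponent at $t=1/2$, namely $-\tfrac12+\tfrac32\log\tfrac32\approx 0.108 > 1/10$, so that $m\exp(-(n/R)(-\tfrac12+\tfrac32\log\tfrac32))\le m\exp(-1.08(\log m+t)) = m^{-0.08}\exp(-1.08\,t)\le\exp(-t)$. This is precisely why the constant $10$ in the sampling condition is calibrated as it is; a verification that merely substituted the quadratic relaxation would break down here. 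Since the lemma asserts the two estimates \emph{respectively} rather than jointly, no union bound is needed, and each event having probability at least $1-\exp(-t)$ completes the argument.
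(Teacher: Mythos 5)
Your proof is correct and follows essentially the same route as the paper's own: the identical rank-one decomposition of the Gram matrix, the same change-of-measure computation giving $\mathds E(\bm A_i)$ proportional to $\Id$, the same almost-sure bound $R$ via the Christoffel function, and the same application of Lemma~\ref{matrixchernoff} with deviation parameter $1/2$ (the paper additionally normalizes each $\bm A_i$ by $1/n$, which rescales $\mu_{\min}$, $\mu_{\max}$, and $R$ simultaneously and changes nothing). The point where you diverge is the upper tail, and your observation there is substantive: the paper disposes of it with the remark that the largest-eigenvalue bound ``works analogue,'' yet the literal analogue of its lower-tail computation, namely the relaxed exponent $\mu_{\max}t^2/(3R)$, yields only $m\exp(-n/(12R)) \le m^{1/6}\exp(-5t/6)$ under the hypothesis $n \ge 10R(\log m + t)$, and this is not dominated by $\exp(-t)$ once $m$ is large. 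Your remedy — retaining the exact Chernoff exponent $-\tfrac12+\tfrac32\log\tfrac32 \approx 0.108 > \tfrac{1}{10}$ for the upper tail — is precisely what the stated constant $10$ requires; the only alternative would be to strengthen the oversampling constant from $10$ to $12$ so that the relaxed exponent suffices. In short, your write-up proves the lemma and, in passing, makes precise the one step that the paper's proof leaves implicit.
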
 

The proof ideas go back to \cite[Thm.~1]{CDL13} and \cite[Thm.~2.1]{CM17} but for the sake of readability we state it here as well.

\begin{proof} The result is a direct consequence of Tropp's result in Lemma~\ref{matrixchernoff}.
    For a randomly chosen point $\bm x^i$ we define the random rank-one matrix $\bm A_i = \frac{1}{n}\beta(\bm x^i) (\bm y^i \otimes \bm y^i)$ with $\bm y^i = (\eta_0(\bm x^i), \dots, \eta_{m-1}(\bm x^i))\transp$.
    By construction, it holds
    \begin{align*}
        \sum_{i=1}^n \bm A_i
        = \bm L\herm\bm W\bm L
    \end{align*}
    and by the orthogonality of $\eta_k$
    \begin{align*}
        \Big( \mathds E ( \bm A_i) \Big)_{k,l}
        = \frac 1n \int_{D} \eta_k(\bm x) \overline{\eta_l(\bm x)} \beta(\bm x) \beta\inv(\bm x) \; d\varrho_T(\bm x)
        = \frac{\delta_{k,l}}{n} \,,
    \end{align*}
    which gives $\mathds E \Big(\sum_{i=1}^{n} \bm A_i\Big) = \Id_{m\times m}$ and, therefore, $\mu_{\max} = 
    \mu_{\min} = 1$.
    Further, we have
    \begin{align*}
            \lambda_{\max}\Big(\frac{1}{n}\beta(\bm x^i) (\bm y^i \otimes \bm y^i)\Big) 
            = \frac{1}{n}\beta(\bm x^i)\|\bm y^i\|_2^2 
            \le \frac 1 n \|\beta(\cdot) N(V_m, \cdot) \|_\infty \,.
    \end{align*}
    Lemma~\ref{matrixchernoff} with $t = 1/2$ then gives the lower bound
    \begin{align*}
            \mathds P \Big(\lambda_{\min}\Big(\frac{1}{n}\bm L\herm\bm W\bm L\Big) \le \frac 12\Big)
            &\leq {m}\exp\Big(-\frac{n}{10} \|\beta(\cdot) N(V_m,\cdot) \|_\infty\inv \Big) \,,
    \end{align*}
    which is smaller than $\exp(-t)$ by the assumption on $m$.

    The bound for the largest eigenvalue works analogue.
\end{proof}

We now formulate a bound on the $ L_2$-error of the weighted least squares method for exact function values.
This result is heavily based on \cite[Theorem~3.20]{LPU21} which extends to a more general setting.

\begin{theorem}[$ L_2$-error bound without noise]\label{L2wo} Let $f\colon D\to\mathds C$, $\bm x^1,\dots,\bm x^n$, $n\in\mathds N$ be points drawn according to a probability measure $\mathrm d\varrho_S = 1/\beta\,\mathrm d\varrho_T$ and $\bm y = (f(\bm x^1), \dots, f(\bm x^n))\transp$ exact function values.
    Let further, $t\ge 0$, $V_m$ be an $m$-dimensional function space with an orthonormal basis $\eta_0, \dots, \eta_{m-1}$ satisfying
    \begin{align*}
        10\| \beta(\cdot) N(V_m, \cdot) \|_\infty (\log(m)+t) \le n \,.
    \end{align*}
    Then, for $S_m$ the weighted least squares method defined in \eqref{eq:lsqrmatrix} with $\omega_i = \beta(\bm x^i)$, we have with probability exceeding $1-2\exp(-t)$:
    \begin{align*}
        \| f - S_m \bm y \|_{ L_2}^{2}
        &\le 8 \Big( e(f,V_m, L_2)_{ L_2}+ \sqrt{\frac t n} e(f,V_m, L_2)_{ L_\infty} \Big)^2 \,.
    \end{align*}
\end{theorem}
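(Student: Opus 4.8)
The plan is to exploit the linearity of $S_m$ together with the reproduction property it enjoys once the Gram matrix is well conditioned. Write $g^{*} = P(f,V_m,L_2)$ and $h = f - g^{*}$, so that $h \perp V_m$ in $L_2$ and $e(f,V_m,L_2)_{L_2} = \|h\|_{L_2}$, $e(f,V_m,L_2)_{L_\infty} = \|h\|_{L_\infty}$. First I would condition on the event of Lemma~\ref{l:trand} governing the smallest singular value, which fails with probability at most $\exp(-t)$; on it $\lambda_{\min}(\bm L\herm\bm W\bm L)\ge n/2$, so $\bm L$ is injective and $S_m$ reproduces every element of $V_m$. In particular $S_m\bm v = g^{*}$ for the sample vector $\bm v = (g^{*}(\bm x^i))_{i=1}^{n}$. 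Since $S_m\bm y\in V_m$ while $h\perp V_m$, the Pythagorean identity together with $S_m\bm y - g^{*} = S_m(\bm y - \bm v) = S_m\bm h$ gives
\[
  \| f - S_m\bm y \|_{L_2}^2 = \|h\|_{L_2}^2 + \|S_m\bm h\|_{L_2}^2 = e(f,V_m,L_2)_{L_2}^2 + \|S_m\bm h\|_{L_2}^2 ,
\]
where $\bm h = (h(\bm x^i))_{i=1}^{n}$ is the vector of samples of the residual.

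Next I would estimate the discretization term $\|S_m\bm h\|_{L_2}^2$. Because the $\eta_k$ are $L_2$-orthonormal, $\|S_m\bm h\|_{L_2}$ equals the Euclidean norm of the coefficient vector $(\bm L\herm\bm W\bm L)\inv\bm L\herm\bm W\bm h$, and by the singular value decomposition \eqref{eq:svd} the map $\bm W^{1/2}\bm h \mapsto$ coefficients has operator norm $1/\sigma_{\min}(\bm W^{1/2}\bm L)$. On the event above this yields
\[
  \|S_m\bm h\|_{L_2}^2 \le \frac{2}{n}\,\|\bm W^{1/2}\bm h\|_2^2 = \frac{2}{n}\sum_{i=1}^{n}\frac{|h(\bm x^i)|^2}{\varrho(\bm x^i)} .
\]
The right-hand side is twice the empirical weighted $L_2$-norm of the residual, a sum of independent nonnegative random variables $\xi_i = |h(\bm x^i)|^2/\varrho(\bm x^i)$ whose common expectation is $\int_D |h|^2\,\mathrm d\mu = e(f,V_m,L_2)_{L_2}^2$.

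The crux is then to concentrate $\tfrac1n\sum_i\xi_i$ around this mean with Bernstein's inequality applied to the centred variables $\xi_i - \mathds E(\xi_i)$. This needs a uniform bound on the $\xi_i$ and a variance proxy, and bounding one of the two factors $|h(\bm x^i)|^2$ by its supremum $e(f,V_m,L_2)_{L_\infty}^2$ is what produces the $\sqrt{t/n}\,e(f,V_m,L_2)_{L_\infty}$ fluctuation. Bernstein then delivers, with probability at least $1-\exp(-t)$,
\[
  \frac1n\sum_{i=1}^{n}\xi_i \le e(f,V_m,L_2)_{L_2}^2 + c_1\sqrt{\tfrac tn}\,e(f,V_m,L_2)_{L_2}\,e(f,V_m,L_2)_{L_\infty} + c_2\,\tfrac tn\,e(f,V_m,L_2)_{L_\infty}^2
\]
for absolute constants $c_1,c_2$. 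A union bound over this event and the one from Lemma~\ref{l:trand} costs $2\exp(-t)$, matching the stated probability; substituting into the Pythagorean identity and completing the square inside $\bigl(e(f,V_m,L_2)_{L_2}+\sqrt{t/n}\,e(f,V_m,L_2)_{L_\infty}\bigr)^2$ absorbs the constants into the factor $8$. I expect the main obstacle to lie exactly in this concentration step: matching the uniform bound and variance parameters of the $\xi_i$ so that only the $L_2$- and $L_\infty$-errors of the residual survive, with no spurious dependence on $\|1/\varrho\|_\infty$, and then tracking all constants and failure probabilities through the union bound to arrive cleanly at the constant $8$.
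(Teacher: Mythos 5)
Your proposal follows essentially the same route as the paper's proof: the Pythagorean split into truncation and discretization error, the reproduction of $V_m$ by $S_m$ on the smallest-singular-value event of Lemma~\ref{l:trand}, the reduction of $\|S_m\bm h\|_{L_2}^2$ via \eqref{eq:svd} to the empirical weighted mean of the residual samples, Bernstein's inequality for the centred variables $\xi_i$, and a final union bound costing $2\exp(-t)$, with the constant $8$ emerging exactly as you predict (the paper obtains $13/3+2\sqrt 2\le 8$, using $t\le n$, which follows from the assumption on $m$). The one subtlety you flag -- keeping $\|1/\varrho\|_\infty$ out of the Bernstein parameters, whose natural form involves $\sup\bigl(\lvert h\rvert^2/\varrho\bigr)$ rather than $e(f,V_m,L_2)_{L_\infty}^2$ -- is indeed the delicate point, and the paper's own proof passes over it silently in the bounds on $\mathds E(\xi_i^2)$ and $\|\xi_i\|_\infty$.
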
 

\begin{proof} For abbreviation, we use $ e_2 = e(f, V_m,  L_2)_{ L_2} $ and $ e_\infty = e(f, V_m,  L_2)_{ L_\infty} $.
    Further, we define the event
    \begin{align}\label{eq:L2woA}
        A \coloneqq
        \Big\{ \bm x^1, \dots, \bm x^n \in D : 
        \frac n2
        \le \|\bm W^{1/2}\bm L \|_{2\to 2}^2
        \Big\}
    \end{align}
    which has probability $\mathds P(A) \ge 1-\exp(-t)$ by Lemma~\ref{l:trand} and the assumption on $V_m$.
    We split the approximation error
    \begin{align*}
        \| f - S_m \bm f \|_{ L_2}^{2}
        &= e_2^2
        + \| P(f, V_m,  L_2) - S_m \bm f \|_{ L_2}^{2} \,.
    \end{align*}
    Due to the invariance of $S_m$ to functions in $V_m$, we pull it in front and use compatibility of the operator norm to obtain
    \begin{align*}
        \| f - S_m \bm f \|_{ L_2}^{2}
        &\le e_2^2
        + \|S_m\|_{2\to 2}^2 \sum_{i=1}^{n} \beta(\bm x^i) \lvert(f-P(f, V_m,  L_2))(\bm x^i)\rvert^2 \,.
    \end{align*}
    By \eqref{eq:svd} and the event \eqref{eq:L2woA}, we have $\|S_m\|_{2\to 2}^2 = \|\bm W^{1/2}\bm L\|_{2\to 2}^{-1} \le 2/n$.
    Thus,
    \begin{align*}
        \| f - S_m \bm f \|_{ L_2}^{2}
        &\le 3 e_2^2
        + \frac 2n \sum_{i=1}^{n} \Big\lvert \lvert\omega_i (f-P(f, V_m,  L_2))(\bm x^i)\rvert^2 - e_2^2 \Big\rvert \,.
    \end{align*}

    It remains to estimate the latter summand.
    We define
    \begin{align*}
        \xi_i = \beta(\bm x^i) \Big\lvert(f - P_m f)(\bm x^i)\Big\rvert^2 - e_2^2,
    \end{align*}
    which is mean-zero since we sample with respect to the distribution $\varrho_S$.
    Further, we have
    \begin{align*}
        \mathds E (\xi_i^2)
        = \mathds E\Big((\beta(\bm x^1))^2 \lvert(f-P_m f)(\bm x^i)\rvert^4\Big) - e_2^4
        \le \|f-P_m f\|_{ L_\infty}^{2} e_2^2 - e_2^4
        \le e_2^2 (e_2 + e_\infty)^2 \,,
    \end{align*}
    and
    \begin{align*}
        \|\xi_i\|_{\infty}
        \le \sup_{x\in D}\Big\lvert
            \beta(\bm x) \lvert(f-P_m f)(\bm x)\rvert^2 - e_2^2
        \Big\rvert
        \le
            e_\infty^2
            +e_2^2 \,.
    \end{align*}
    Thus, the conditions in order to apply Bernstein are fulfilled:
    \begin{align}
        \frac 1n\sum_{i=1}^n \xi_i
        &\le 
        \frac{2t}{3n}\Big(e_2^2 + e_\infty^2\Big)
        +\sqrt{\frac{2t}{n}} \Big(e_\infty e_2
        +e_2^2\Big) \nonumber \\
        &\le \Big(\frac 23 +\sqrt 2\Big)e_2^2 
        + \sqrt{\frac{2t}{n}} e_\infty e_2
        + \frac{2t}{3n}e_\infty^2
        \label{eq:L2woB}
    \end{align}
    with probability $1-\exp(-t)$, where $t\le n$ was used in the last inequality.
    Thus,
    \begin{align*}
        \| f - S_m \bm f \|_{ L_2}^{2}
        &\le \Big(\frac{13}{3} +2\sqrt 2\Big)e_2^2 
        + \sqrt{\frac{8t}{n}} e_\infty e_2
        + \frac{4t}{3n}e_\infty^2 \\
        &\le \Big(\frac{13}{3} +2\sqrt 2\Big)
        \Big(e_2
        + \sqrt{\frac{t}{n}}e_\infty\Big)^2 \,.
    \end{align*}
    By union bound we obtain the overall probability exceeding the sum of the probabilities of events given by \eqref{eq:L2woA} and \eqref{eq:L2woB}.
\end{proof} 

Provided $N(V_m)/\|\beta(\cdot) N(V_m, \cdot)\|_\infty$ is finite, Theorem~\ref{L2wo} says that the least squares approximation from a finite-dimensional function space $V_m$ and given the oversampling condition has the same error as the $L_2$-projection up to a multiplicative constant with high probability.
This improves on \cite[Theorem~2.1]{CM17} where the same bound was shown in expectation or bounded by the $L_\infty$-error with high probability.

Next, we prove the central theorem which includes the noisy case.

\begin{proof}[Proof of Theorem~\ref{L2w}] We split the approximation error
    \begin{align*}
        \| f - S_m \bm y \|_{ L_2}^{2}
        &\le e(f, V_m,  L_2)_{ L_2} + 2 \| f - S_m\bm f \|_{ L_2}^{2}
        + 2 \|S_m \bm\varepsilon \|_{ L_2}^{2}
    \end{align*}
    and bound the first two summands as in the proof of Theorem~\ref{L2wo} with the events given by \eqref{eq:L2woA} and \eqref{eq:L2woB}.
    Note, the constant changes from $13/3+2\sqrt 2$ to $23/3+4\sqrt 2 \le 14$.
    Now, we focus on the third summand.
    Applying Corollary~\ref{hanson-wright} gives
    \begin{align}
        &\|S_m \bm\varepsilon \|_{ L_2}^{2}
        = \|(\bm L\herm\bm W\bm L)\inv\bm L\herm\bm W \bm\varepsilon \|_{2}^{2} \nonumber \\
        &\le
        128 \|\bm\varepsilon\|_\infty^2 \|(\bm L\herm\bm W\bm L)\inv\bm L\herm\bm W\|_{2\to 2}^2 t+(8\sqrt 3 \|\bm\varepsilon\|_\infty\sqrt{t\sigma^2} + \sigma^2 )\|(\bm L\herm\bm W\bm L)\inv\bm L\herm\bm W\|_F^2 \label{eq:L2wC}
    \end{align}
    with probability $1-\exp(-t)$.
    Since $\bm L\herm\bm W\bm L \in \mathds C^{m\times m}$, the matrix $(\bm L\herm\bm W\bm L)\inv\bm L\bm W^{1/2}$ has rank at most $m$ and, thus, we use $\|\bm A\|_F^2 \le \rank(\bm A)\|\bm A\|_{2\to 2}^2$ to obtain
    \begin{align*}
        \|(\bm L\herm\bm W\bm L)\inv\bm L\herm\bm W\|_F^2
        \le \|\beta\|_\infty m \|(\bm L\herm\bm W\bm L)\inv\bm L\herm\bm W^{1/2}\|_{2\to 2}^2
        \le \|\beta\|_\infty \frac{2m}{n}
    \end{align*}
    where the last inequality follows from  \eqref{eq:svd} and event \eqref{eq:L2woA}.
    Therefore,
    \begin{align*}
        \|S_m \bm\varepsilon \|_{ L_2}^{2}
        \le 
        \|\beta\|_\infty \Big(
        128 \|\bm\varepsilon\|_\infty^2 \frac 2n t+(8\sqrt 3 \|\bm\varepsilon\|_\infty\sqrt{t\sigma^2} + \sigma^2 )\frac{2m}{n}
        \Big) \,.
    \end{align*}
    By union bound we obtain the overall probability exceeding the sum of the probabilities of the events given by \eqref{eq:L2woA}, \eqref{eq:L2woB}, and \eqref{eq:L2wC}.
\end{proof} 

Next, we prove Theorem~\ref{Linfw} bounding the approximation error of least squares in the $L_\infty$-norm.

\begin{proof}[Proof of Theorem~\ref{Linfw}] For abbreviation, we use $ e_2 = e(f, V_m,  L_\infty)_{ L_2} $ and $ e_\infty = e(f, V_m,  L_\infty)_{ L_\infty}$.
    For any $g = \sum_{k=1}^{m} \langle g, \eta_k \rangle_{ L_2} \eta_k \in V_m$ the Hölder-inequality gives an estimate on the $ L_\infty$-norm in terms of the $ L_2$-norm:
    \begin{align}\label{eq:L2Linfty}
        \|g\|_{ L_\infty}
        = \Big\|\sum_{k=1}^{m} \langle g, \eta_k \rangle_{ L_2} \eta_k \Big\|_{ L_\infty}
        \hspace{-6pt}\le \Big\|\sqrt{\sum_{k=1}^{m} \lvert\langle g, \eta_k \rangle_{ L_2}\rvert^2} \sqrt{ \sum_{k=1}^{m} \lvert\eta_k\rvert^2} \Big\|_{ L_\infty}
        \hspace{-6pt}= \sqrt{N(V_m)} \|g\|_{ L_2} \,.
    \end{align}
    Using this, we reduce the $ L_\infty$-case to the $ L_2$-case which we already covered.
    We split the approximation error
    \begin{align*}
        \| f - S_m \bm y \|_{ L_\infty}
        &\le \| f - P(f, V_m,  L_\infty) \|_{ L_\infty}
        + \| P(f, V_m,  L_\infty) - S_m \bm f \|_{ L_\infty}
        + \|S_m \bm\varepsilon \|_{ L_\infty} \\
        &\le e_\infty
        + \sqrt{N(V_m)} \| P(f, V_m,  L_\infty) - S_m \bm f \|_{ L_2}
        + \sqrt{N(V_m)} \|S_m \bm\varepsilon \|_{ L_2} \,.
    \end{align*}
    Analogously to \eqref{eq:L2woB} we obtain
    \begin{align*}
        \| P(f, V_m,  L_\infty) - S_m \bm f \|_{ L_2}^2
        &\le \Big(\frac 23 +\sqrt 2\Big)e_\infty^2 
        + \sqrt{\frac{2t}{n}} e_\infty e_2
        + \frac{2t}{3n}e_2^2 \\
        &\le \Big(\frac 23+\sqrt 2\Big) \Big( e_\infty+ \sqrt{\frac t n} e_2 \Big)^2 \,,
    \end{align*}
    where the last inequality follows from $t\le n$.
    Thus,
    \begin{align*}
        \| f - S_m \bm y \|_{ L_\infty}
        &\le \Big(1+\sqrt{\frac{4+6\sqrt 2}{3} N(V_m)}\Big)
        \Big( e_\infty+ \sqrt{\frac t n} e_2 \Big)
        + \sqrt{N(V_m)} \|S_m \bm\varepsilon \|_{ L_2} \,.
    \end{align*}
    Using the same bound as in Theorem~\ref{L2w} for $\|S_m \bm\varepsilon \|_{ L_2}$ we obtain the assertion.
\end{proof}  \section{Application on the unit cube}\label{sec:application}
In this section we are interested in approximating functions on the $d$-dimensional unit cube $D = [0,1]^d$ when sample points are drawn uniformly, i.e., with respect to the Lebesgue measure $\mathrm d\bm x$.
The deterministic equivalent to uniform sampling are equispaced points.
When using these for polynomial interpolation, Runge already knew in 1901, that higher degree polynomials lead to oscillatory behaviour towards the border which spoil the approximation error.
Even though, we do not interpolate, we will observe similar behaviour using Legendre and Chebyshev polynomials.
We propose alternative bases, which are stable for large $m = \dim(V_m)$ as well.

Throughout this section we have $ L_2 =  L_2((0,1)^d, \mathrm d\bm x)$ unless stated otherwise.

We consider function spaces to know about the decay of the coefficients.
Note, that for individual functions they may decay faster in contrast to the worst-case setting.
Literature for the worst-case setting can be found in the papers \cite{KU21} for random points with logarithmic oversampling, \cite{NSU21, LT22} for subsampled points with linear oversampling and a logarithmic gap in the error bound
(this was made constructive in \cite{BSU22}), and \cite{DKU23} for subsampled points with linear oversampling and sharp bounds.

\subsection{Sobolev spaces on the unit interval}\label{subs:hs} 

Let $d = 1$, $D = [0,1]$ be the unit interval equipped with the Lebesgue measure $ \mathrm d x$.
In order to get hold on appropriate finite-dimensional function spaces $V_m$ for approximation, we have a look at Sobolev spaces $ H^s =  H^s(0,1)$ with integer smoothness $s\ge 0$.
The inner product of these Hilbert spaces is given by
\begin{align*}
    \langle f, g \rangle_{ H^s}
    = \langle f, g \rangle_{ L_2}
    + \langle f^{(s)}, g^{(s)} \rangle_{ L_2} \,.
\end{align*}
Since $\|f\|_{ L_2}^2 \le \|f\|_{ H^s}^2 = \langle f, f \rangle_{ H^s}$, the embedding operator $\Id\colon  H^s\hookrightarrow L_2$ is compact.
Thus, $W = \Id\herm\circ\Id \colon  H^s\to H^s$ is compact and self-adjoint.
Applying the spectral theorem gives for $f\in H^s$
\begin{align*}
    W(f)
    = \sum_{k=0}^{\infty} \sigma_k \langle f,e_k \rangle_{ H^s} e_k
\end{align*}
where $(\sigma_k)_{k=0}^{\infty}$ is the non-increasing rearrangement of the singular values of $W$ and $(e_k)_{k=0}^{\infty}\subset  H^s$ the corresponding system of eigenfunctions forming an orthonormal basis in $ H^s$.
Since
\begin{align*}
    \langle e_k, e_l \rangle_{ L_2}
    = \langle \Id(e_k), \Id(e_l) \rangle_{ L_2}
    = \langle W (e_k), e_l \rangle_{ H^s}
    = \sigma_k^2 \langle e_k, e_l \rangle_{ H^s}
    = \sigma_k^2 \delta_{k,l}\,,
\end{align*}
the functions $\eta_k = \sigma_k\inv e_k$ form an orthonormal system in $ L_2$.
Setting $V_m = \spn\{\eta_k\}_{k=0}^{m-1}$, we obtain for $ H^s \ni f = \sum_{k=0}^{\infty} \langle f, e_k \rangle_{ H^s} e_k $
\begin{align}\label{eq:R}
    e(f,V_m, L_2)_{ L_2}^2
    = \Big\| \sum_{k=m}^{\infty} \langle f, e_k \rangle_{ H^s} e_k \Big\|_{ L_2}^{2}
    = \sum_{k=m}^{\infty} \Big\lvert \langle f, e_k \rangle_{ H^s} \sigma_k \Big\rvert^2
    \le \|f\|_{ H^s}^2 \sigma_m^2 \,.
\end{align}
Thus, the eigenfunctions corresponding to the largest singular values are a canonical candidate for the approximation space $V_m$.
To put this into concrete terms, in the next two theorems, we give the singular values and eigenfunctions for $ H^1$ and $ H^2$.

\begin{theorem}\label{h1basis} The operator $W = \Id\herm\circ\Id \colon  H^1\to H^1$ has singular values $\sigma_k^2 = \frac{1}{1+\pi^2k^2}$ with corresponding $ L_2$-normalized eigenfunctions
    \begin{align*}
        \eta_k(x)
        = \begin{cases}
            1 & \text{for } k=0 \\
            \sqrt 2\cos(\pi k x) & \text{for } k \ge 1 \,.
        \end{cases}
    \end{align*}
\end{theorem}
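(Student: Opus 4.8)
The plan is to turn the abstract eigenvalue problem for $W$ into a concrete Sturm--Liouville problem whose solutions are the claimed cosines. The starting point is the variational characterization of $W$: since $W=\Id\herm\circ\Id$, for all $f,g\in H^1$ one has $\langle Wf,g\rangle_{H^1}=\langle \Id f,\Id g\rangle_{L_2}=\langle f,g\rangle_{L_2}$. Hence an eigenpair $We=\lambda e$ is equivalent to requiring $\langle e,g\rangle_{L_2}=\lambda\langle e,g\rangle_{H^1}$ for every test function $g\in H^1$, which rearranges to $(1-\lambda)\langle e,g\rangle_{L_2}=\lambda\langle e',g'\rangle_{L_2}$. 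Because $\|\Id\|\le 1$ and $\langle We,e\rangle_{H^1}=\|e\|_{L_2}^2>0$ for $e\neq 0$, the eigenvalues satisfy $\lambda\in(0,1]$, so I may divide by $\lambda$ and set $\mu\coloneqq(1-\lambda)/\lambda\ge 0$.

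Next I would extract the strong form. Bootstrapping in the identity above makes each eigenfunction smooth, so integration by parts gives $\lambda\langle e',g'\rangle_{L_2}=\lambda\,[e'\bar g]_0^1-\lambda\langle e'',g\rangle_{L_2}$. Testing first against $g\in C_c^\infty(0,1)$ kills the boundary term and yields the interior equation $-e''=\mu e$; feeding this back in, the remaining identity forces the boundary contribution $e'(1)\overline{g(1)}-e'(0)\overline{g(0)}$ to vanish for every $g\in H^1$, and since $H^1$ functions take arbitrary endpoint values this delivers the natural (Neumann) conditions $e'(0)=e'(1)=0$. This is the step I expect to be the crux: the boundary conditions are not imposed but arise precisely because the test space $H^1$ does not vanish at the endpoints, and getting them right is what pins down cosines rather than sines or complex exponentials.

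It then remains to solve $-e''=\mu e$ with $e'(0)=e'(1)=0$. The general solution $a\cos(\sqrt\mu\,x)+b\sin(\sqrt\mu\,x)$ together with $e'(0)=0$ forces $b=0$, and $e'(1)=0$ then forces $\sqrt\mu\in\pi\mathds Z$, giving $\mu_k=\pi^2k^2$ with eigenfunctions proportional to $\cos(\pi k x)$, $k\ge 0$. Inverting $\mu_k=(1-\lambda_k)/\lambda_k$ yields $\lambda_k=\sigma_k^2=1/(1+\pi^2k^2)$, matching the claim. Finally I would normalize in $L_2$ using $\int_0^1\cos^2(\pi k x)\,\mathrm dx=1/2$ for $k\ge 1$ and $=1$ for $k=0$, which produces $\eta_0=1$ and $\eta_k=\sqrt 2\cos(\pi k x)$. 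To be sure no eigenfunctions are missed, I would invoke the classical fact that $\{1,\sqrt2\cos(\pi k x):k\ge 1\}$ is a complete orthonormal system in $L_2(0,1)$ (the half-period cosine basis); completeness guarantees that this list exhausts the spectrum of the compact, self-adjoint, positive operator $W$, so the non-increasing rearrangement of $(\sigma_k^2)$ is exactly the stated sequence.
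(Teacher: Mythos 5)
Your proposal is correct and follows essentially the same route as the paper: the variational identity $\langle \eta,\varphi\rangle_{L_2}=\sigma^2\langle \eta,\varphi\rangle_{H^1}$ is integrated by parts to produce the Neumann eigenvalue problem $-\eta''=\frac{1-\sigma^2}{\sigma^2}\eta$ with $\eta'(0)=\eta'(1)=0$, whose solutions are exactly the claimed cosines. Your extra touches --- ruling out negative $\mu$ via $\|\Id\|\le 1$, testing first against $C_c^\infty(0,1)$ and then against general $H^1$ functions to derive the natural boundary conditions, and invoking completeness of the half-period cosine system to confirm the spectrum is exhausted --- merely make explicit details that the paper's terser argument leaves implicit.
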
 

\begin{proof} For $\sigma$ a singular value of $W$ with corresponding eigenfunction $\eta\in  H^1$ and $\varphi\in H^1$ a test function, we have
    \begin{align*}
        \langle \eta, \varphi \rangle_{ L_2}
        = \langle \Id(\eta), \Id(\varphi) \rangle_{ L_2}
        = \langle W(\eta), \varphi \rangle_{ H^1}
        = \sigma^2 \langle \eta, \varphi \rangle_{ H^1}
        = \sigma^2 \Big( \langle \eta, \varphi \rangle_{ L_2} + \langle \eta', \varphi' \rangle_{ L_2} \Big) \,.
    \end{align*}
    Partial differentiation yields
    $ \langle \eta', \varphi' \rangle_{ L_2} = \eta'(1)\varphi(1)-\eta'(0)\varphi(0)-\langle \eta'', \varphi \rangle_{ L_2} $.
    Thus
    \begin{align*}
        \Big\langle \frac{1-\sigma^2}{\sigma^2}\eta + \eta'', \varphi \Big\rangle_{ L_2}
        = \eta'(1)\varphi(1)-\eta'(0)\varphi(0) \,.
    \end{align*}
    Since this has to hold for all test functions $\varphi\in H^1$, we obtain the differential equation
    \begin{align*}
        \frac{1-\sigma^2}{\sigma^2}\eta = -\eta''
        \quad\text{with}\quad
        \eta(0)' = \eta(1)' = 0 \,.
    \end{align*}
    The proposed functions are exactly the ones fulfilling this differential equation.
\end{proof}

To our knowledge, the $ H^1$ basis above was originally introduced in \cite{Krein35} and was already considered in \cite[Lemma~4.1]{WW08} with the same proof technique, in \cite{IN08} as a modified Fourier expansion.
It is further used in \cite{SNC16} as a basis for multivariate approximation in the context of samples along tent-transformed rank-1 lattices, and in \cite{Adcock10, AH11, DDP14, CKNS16, KMNN21}.
The following $H^2$ basis was already posed in \cite[Section 3]{AIN12}, where higher-order Sobolev-spaces are found as well.
The proof of the following theorem is found in Appendix~\ref{app:h2}.

\begin{theorem}\label{h2basis} The operator $W = \Id\herm\circ\Id \colon  H^2\to H^2$ has singular values $\sigma_0^2 = 1$ with corresponding $ L_2$-normalized eigenfunctions
    \begin{align*}
        \eta_0(x) = 1
        \quad\text{and}\quad
        \eta_1(x) = 2\sqrt 3 x-\sqrt 3
    \end{align*}
    and for $k\ge 2$, $\sigma_k^2 = \frac{1}{1+t_k^4}$ with $t_k>0$ the solutions of $\cosh(t_k)\cos(t_k) = 1$ ($t_k \approx \frac{2k-1}{2}\pi$, cf. Lemma~\ref{lemma:coszeros}) and
    \begin{align*}
        \eta_k(x) = \cosh(t_kx)+\cos(t_kx)-\displaystyle{\frac{\cosh(t_k)-\cos(t_k)}{\sinh(t_k)-\sin(t_k)}(\sinh(t_kx)+\sin(t_kx))} \,.
    \end{align*}
    Further, it holds
    \begin{align*}
        \|\eta_k\|_\infty
        \le \begin{cases} 1 & \text{for } k = 0 \\
         \sqrt 3 & \text{for } k = 1 \\
         \sqrt 6 & \text{for } k \ge 2 \,.
        \end{cases}
    \end{align*}
\end{theorem}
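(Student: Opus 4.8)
The plan is to mirror the proof of Theorem~\ref{h1basis}, only now integrating by parts twice. Starting from the weak eigenvalue identity $\langle\eta,\varphi\rangle_{L_2}=\sigma^2\langle\eta,\varphi\rangle_{H^2}=\sigma^2(\langle\eta,\varphi\rangle_{L_2}+\langle\eta'',\varphi''\rangle_{L_2})$ for all test functions $\varphi\in H^2$, I would integrate $\langle\eta'',\varphi''\rangle_{L_2}$ by parts twice to move all derivatives onto $\eta$. Testing first against $\varphi\in C_c^\infty(0,1)$ yields the fourth-order ODE $\eta''''=\frac{1-\sigma^2}{\sigma^2}\eta$, and then letting $\varphi,\varphi'$ assume arbitrary independent boundary values forces the natural (free) boundary conditions $\eta''(0)=\eta''(1)=\eta'''(0)=\eta'''(1)=0$. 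Since $\|\Id\|_{H^2\to L_2}\le1$ gives $\sigma\le1$, the constant $\lambda:=\frac{1-\sigma^2}{\sigma^2}$ is nonnegative, so it suffices to treat $\lambda=0$ and $\lambda=t^4>0$.

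For $\lambda=0$ the equation $\eta''''=0$ has cubic solutions, but the four boundary conditions kill the quadratic and cubic terms, leaving the two-dimensional space of affine functions; its $L_2(0,1)$-orthonormalization produces exactly $\eta_0=1$ and $\eta_1=2\sqrt3\,x-\sqrt3$, both with $\sigma^2=1$. For $\lambda=t^4>0$ I would write the general solution $\eta=A\cosh tx+B\sinh tx+C\cos tx+D\sin tx$; the conditions $\eta''(0)=\eta'''(0)=0$ give $A=C$ and $B=D$, and the conditions at $x=1$ reduce to a homogeneous $2\times2$ system in $(A,B)$ whose determinant, after using $\cosh^2-\sinh^2=1$ and $\cos^2+\sin^2=1$, collapses to $2-2\cosh t\cos t$. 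Hence nontrivial solutions require $\cosh t\cos t=1$, and the resulting null vector gives the eigenfunction shape with $\beta:=B/A=-\frac{\cosh t-\cos t}{\sinh t-\sin t}$, matching the stated formula. I would invoke Lemma~\ref{lemma:coszeros} for the existence, ordering, and asymptotics $t_k\approx\frac{2k-1}{2}\pi$ of the positive roots; since $\sigma_k^2=1/(1+t_k^4)$ is decreasing in $t_k$, this fixes the indexing and the non-increasing rearrangement of the singular values.

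It remains to normalize and to bound the sup-norms. The normalization $\|\eta_k\|_{L_2}=1$ is a direct (if lengthy) evaluation of $\int_0^1\eta_k^2$, in which the elementary integrals of products of $\cosh,\sinh,\cos,\sin$ simplify dramatically once $\cosh t_k\cos t_k=1$ is substituted; this is consistent with the construction $\eta_k=\sigma_k^{-1}e_k$, which is $L_2$-orthonormal by design, so in particular $|\eta_k(0)|=|\eta_k(1)|=2$. For the sup-norm the cases $k=0,1$ are immediate ($\|\eta_0\|_\infty=1$, $\|\eta_1\|_\infty=\sqrt3$). For $k\ge2$ I would split $\eta_k=\tfrac12(w+u)$ into its hyperbolic part $w=2(\cosh tx+\beta\sinh tx)$, solving $w''=t^2w$, and its trigonometric part $u=2(\cos tx+\beta\sin tx)$, solving $u''=-t^2u$. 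The conserved energy $t^2u^2+(u')^2\equiv4t^2(1+\beta^2)$ gives $|u|\le2\sqrt{1+\beta^2}$ at once. For $w$, the invariant $t^2w^2-(w')^2\equiv4t^2(1-\beta^2)$ bounds $w$ at any interior critical point by $w^2=4(1-\beta^2)\le4$ (no such point existing when $\beta^2>1$), while at the endpoints $w=\eta_k$ with $|\eta_k|=2$; as $w$ solves a second-order equation it has at most one interior critical point, so $|w|\le2$ throughout $[0,1]$. Thus $\|\eta_k\|_\infty\le1+\sqrt{1+\beta^2}$, and I would finish by proving $\beta^2\le6-2\sqrt6$ — equivalently $1+\sqrt{1+\beta^2}\le\sqrt6$ — using the monotone bound $|\beta|\le\frac{\cosh t+1}{\sinh t-1}$ evaluated at the smallest relevant root $t_2>\tfrac{3\pi}{2}$.

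The main obstacle is this last, uniform-in-$k$ control of the sup-norm. Because $|\beta|\to1$ as $t_k\to\infty$, the hyperbolic part $w$ is a near-cancellation of the exponentially growing mode against $e^{-tx}$, so crude triangle-inequality bounds applied to the explicit formula blow up and the conserved-quantity argument above is essential. Moreover the target constant $\sqrt6\approx2.449$ is barely above $1+\sqrt2\approx2.414$, so both the estimate on $\beta^2$ and the normalization must be sharp rather than merely asymptotic.
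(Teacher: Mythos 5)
Your proposal follows the paper's skeleton for the spectral part: the same weak-form derivation of $\eta^{(4)}=\frac{1-\sigma^2}{\sigma^2}\eta$ with the natural conditions $\eta''(0)=\eta''(1)=\eta'''(0)=\eta'''(1)=0$, the same case split, the same $2\times 2$ determinant condition $\cosh t\cos t=1$, and in fact the same decomposition for the sup-norm: your $w/2$ and $u/2$ are exactly the paper's $\eta^{I}_{t_k}$ from \eqref{eq:h2I} and $\eta^{II}_{t_k}$. Two of your choices genuinely differ, and both are sound. Dismissing $\sigma^2>1$ via $\sigma^2\|\eta\|_{H^2}^2=\|\eta\|_{L_2}^2\le\|\eta\|_{H^2}^2$ is cleaner than the paper's explicit third case (a four-parameter ansatz in $\cosh\cos$-type products whose determinant is shown to vanish only at $t=0$). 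And for $k\ge 2$ the paper bounds $\lvert\eta^{I}\rvert\le 1$ through two dedicated lemmas (parity about $x=1/2$, Lemma~\ref{lemma:sym}, plus convexity/monotonicity, Lemma~\ref{lemma:monotone}) and $\lvert\eta^{II}\rvert\le\sqrt{1+B^2}$ through a phase-shift identity, whereas your oscillator and hyperbolic invariants $t^2u^2+(u')^2$ and $t^2w^2-(w')^2$, combined with the fact that $w$ has at most one interior critical point, recover both bounds more economically; your closing estimate $\beta^2\le 6-2\sqrt 6$ via the decreasing bound $\lvert\beta\rvert\le(\cosh t+1)/(\sinh t-1)$ at $t_2>3\pi/2$ (Lemma~\ref{lemma:coszeros}) does close numerically, about as tightly as the paper's $1+1.01\sqrt 2\le\sqrt 6$.

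There is, however, one genuine gap: the endpoint value $\lvert w(1)\rvert=2$. You justify it by ``at the endpoints $w=\eta_k$ with $\lvert\eta_k\rvert=2$'', and earlier you derive $\lvert\eta_k(0)\rvert=\lvert\eta_k(1)\rvert=2$ from the fact that $\eta_k=\sigma_k^{-1}e_k$ is ``$L_2$-orthonormal by design''. That is a non sequitur: normalization says nothing about boundary values. While $\eta_k(0)=2$ is immediate, $\lvert\eta_k(1)\rvert=2$ (equivalently $\lvert\eta^{I}_{t_k}(1)\rvert=1$, the paper's \eqref{eq:I}) is precisely as nontrivial as the symmetry statement the paper proves in Lemma~\ref{lemma:sym}; without it, your argument controls $w$ at $x=0$ and at interior critical points but not at $x=1$, where $w$ is a near-cancellation of exponentially large terms. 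The repair stays inside your own framework: since $\eta_k=\frac12(w+u)$ with $w''=t^2w$ and $u''=-t^2u$, the boundary conditions at $x=1$ read $\eta_k''(1)=\frac{t^2}{2}\bigl(w(1)-u(1)\bigr)=0$ and $\eta_k'''(1)=\frac{t^2}{2}\bigl(w'(1)-u'(1)\bigr)=0$, so $w(1)=u(1)$ and $w'(1)=u'(1)$; adding your two invariants at $x=1$ then gives $2t^2w(1)^2=4t^2(1-\beta^2)+4t^2(1+\beta^2)=8t^2$, hence $\lvert w(1)\rvert=\lvert u(1)\rvert=2$, with no symmetry lemma needed. Two smaller points: the reason $w$ has at most one interior critical point is not that it ``solves a second-order equation'' (false for $u$, which solves one too) but that $w'=2t(\sinh tx+\beta\cosh tx)$ vanishes at most once; and the $L_2$-normalization of the stated formula, which you defer, is a genuine computation (the paper devotes a substantial part of its proof to it) --- it cannot be waved through by appeal to the abstract construction, since one must verify that the concrete formula with leading coefficient $1$ on $\cosh+\cos$ has norm exactly one.
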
 

The singular values $\sigma_k$ for $ H^2$ decay quadratic in contrast to linearly for $ H^1$.
Thus, approximating a twice differentiable function, $m = \dim(V_m)$ can be chosen smaller when using the $ H^2$ basis whilst achieving the same truncation error $e(f,V_m,  L_2)_{ L_2}$.
Furthermore, as noise enters with the factor $m/n$, cf.\ Theorem~\ref{L2w}, this helps prevent overfitting as well and leads to a smaller approximation error.

However, as $\cosh$ and $\sinh$ both grow exponentially, the representation of the $ H^2$ basis in Theorem~\ref{h2basis} is prone to cancellations and, therefore, numerical unstable.
In the next theorem we pose an approximation which is numerically stable.

\begin{theorem}\label{h2approximation} For $0<t_2<t_3<\dots$ fulfilling $\cosh(t_k)\cos(t_k) = 1$, let $\eta_k$ be as in Theorem~\ref{h2basis}.
    Further, for $n \ge 2$, let $\tilde t_k = \pi(2k-1)/2$ and
    \begin{align*}
        &\tilde \eta_k(x)
        = \sqrt 2\cos\Big(\tilde t_k x+\pi/4\Big)\\
        &\quad+\mathds 1_{[0,1/2]}(x)\exp\Big(-\tilde t_k x\Big)+\mathds 1_{[1/2,1]}(x)(-1)^k\exp\Big(-\tilde t_k (1-x)\Big) \,.
    \end{align*}
    Then $\lvert \eta_k(x) -\tilde\eta_k(x) \rvert \le \varepsilon $ for $k \ge \frac 2\pi \log(16/\varepsilon)+1$.
    In particular, the approximation $\tilde \eta_k$ is exact up to machine precision $\varepsilon = 10^{-16}$ for $k\ge 27$.
\end{theorem}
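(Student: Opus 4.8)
The plan is to split each eigenfunction into a hyperbolic and a trigonometric part and to control the two exponentially small quantities $t_k-\tilde t_k$ and $c_k-1$, where $c_k=\frac{\cosh(t_k)-\cos(t_k)}{\sinh(t_k)-\sin(t_k)}$ is the coefficient appearing in $\eta_k$ of Theorem~\ref{h2basis}. Writing $\eta_k=H_k+T_k$ with
\begin{align*}
    H_k(x)=\cosh(t_kx)-c_k\sinh(t_kx),\qquad T_k(x)=\cos(t_kx)-c_k\sin(t_kx),
\end{align*}
I would first extract the elementary consequences of the defining equation $\cosh(t_k)\cos(t_k)=1$, i.e.\ $\cos(t_k)=1/\cosh(t_k)$. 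Since $|\cos(t_k)|=1/\cosh(t_k)\le 2e^{-t_k}$ while $t_k\approx\tilde t_k=\pi(2k-1)/2$ by Lemma~\ref{lemma:coszeros} and $|\cos'|=|\sin|$ stays bounded away from zero near $\tilde t_k$, the mean value theorem yields $|t_k-\tilde t_k|\le C e^{-t_k}$.

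The trigonometric part is the easy one. Using $\cos\theta-\sin\theta=\sqrt2\cos(\theta+\pi/4)$ I write
\begin{align*}
    T_k(x)=\sqrt2\cos(t_kx+\pi/4)-(c_k-1)\sin(t_kx),
\end{align*}
so that replacing $t_k$ by $\tilde t_k$ costs at most $\sqrt2\,|t_k-\tilde t_k|$ (the cosine is Lipschitz in its argument and $x\le1$), while the remaining term is bounded by $|c_k-1|$. Both are exponentially small, so $T_k(x)=\sqrt2\cos(\tilde t_kx+\pi/4)+O(e^{-t_k})$ uniformly on $[0,1]$.

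The hyperbolic part carries the whole difficulty, and I expect it to be the main obstacle. In exponentials $H_k(x)=e^{-t_kx}-(c_k-1)\sinh(t_kx)$, and since $\sinh(t_kx)$ grows like $\tfrac12e^{t_kx}$, reaching $\tfrac12e^{t_k}$ at $x=1$, a crude bound $|c_k-1|\le Ce^{-t_k}$ only produces an $O(1)$ error. Hence the key step is a \emph{second-order} expansion of $c_k-1$. Inserting $\cos(t_k)=1/\cosh(t_k)$ and $\sin(t_k)=(-1)^{k-1}\sqrt{1-\cos^2(t_k)}$ into $c_k-1=\frac{e^{-t_k}-\cos(t_k)+\sin(t_k)}{\sinh(t_k)-\sin(t_k)}$ and expanding the resulting geometric series in $e^{-t_k}$ gives
\begin{align*}
    c_k-1=2(-1)^{k-1}e^{-t_k}+r_k,\qquad |r_k|\le Ce^{-2t_k}.
\end{align*}
With this, $2(-1)^{k-1}e^{-t_k}\sinh(t_kx)=(-1)^{k-1}\bigl(e^{-t_k(1-x)}-e^{-t_k(1+x)}\bigr)$, whence
\begin{align*}
    H_k(x)=e^{-t_kx}+(-1)^ke^{-t_k(1-x)}+O(e^{-t_k}),
\end{align*}
because $e^{-t_k(1+x)}\le e^{-t_k}$ and $|r_k\sinh(t_kx)|\le Ce^{-2t_k}\cdot\tfrac12e^{t_k}=\tfrac{C}{2}e^{-t_k}$. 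It is precisely here that the second-order precision of $c_k-1$ is indispensable: only the exact leading coefficient $2(-1)^{k-1}$ survives multiplication by the growing factor $\sinh(t_kx)$.

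It remains to compare with $\tilde\eta_k$. On $[0,1/2]$ the term $(-1)^ke^{-t_k(1-x)}$ is at most $e^{-t_k/2}$ and is dropped, while $e^{-t_kx}$ is kept and $t_k$ replaced by $\tilde t_k$; on $[1/2,1]$ the roles are interchanged. Each replacement $|e^{-t_kx}-e^{-\tilde t_kx}|$ is controlled by $|t_k-\tilde t_k|$ and is exponentially small. Collecting the $O(e^{-t_k})$ contributions from $H_k$ and $T_k$ together with the dropped subdominant exponential, all terms are bounded by a constant multiple of $e^{-\tilde t_k/2}$; keeping track of the constants yields $|\eta_k(x)-\tilde\eta_k(x)|\le 16\,e^{-\tilde t_k/2}$. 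Since $k\ge\frac2\pi\log(16/\varepsilon)+1$ forces $\tilde t_k=\pi(2k-1)/2\ge 2\log(16/\varepsilon)$, hence $16\,e^{-\tilde t_k/2}\le\varepsilon$, the claim follows, and the stated case $\varepsilon=10^{-16}$, $k\ge27$ is the numerical specialization.
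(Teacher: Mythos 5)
Your proposal is correct, but it takes a genuinely different route from the paper's proof. The paper first reduces to $x\in[0,1/2]$ via the parity of the eigenfunctions about $x=1/2$ (Lemma~\ref{lemma:sym}, which itself rests on the convexity/monotonicity statement of Lemma~\ref{lemma:monotone}); on that half-interval $\sinh(t_kx)\le\sinh(t_k/2)$, so the crude first-order estimate $\lvert(1-c_k)\sinh(t_kx)\rvert=O(\mathrm e^{-t_k/2})$ of Lemma~\ref{lemma:orange} already suffices, and the reflected exponential $(-1)^k\mathrm e^{-\tilde t_k(1-x)}$ on $[1/2,1]$ never has to be produced analytically --- it is inherited from the common symmetry of $\eta_k$ and $\tilde\eta_k$. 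You instead work on all of $[0,1]$ and compensate with the second-order expansion $c_k-1=2(-1)^{k-1}\mathrm e^{-t_k}+O(\mathrm e^{-2t_k})$, whose leading term, multiplied by $\sinh(t_kx)$, yields exactly $(-1)^{k-1}\bigl(\mathrm e^{-t_k(1-x)}-\mathrm e^{-t_k(1+x)}\bigr)$; thus the subdominant exponential in $\tilde\eta_k$, including its sign $(-1)^k$, is derived rather than inherited. You also correctly identify why second order is indispensable: the remainder $r_k$ is multiplied by $\sinh(t_kx)\sim\tfrac12\mathrm e^{t_k}$ near $x=1$, so only $\lvert r_k\rvert=O(\mathrm e^{-2t_k})$ keeps that contribution exponentially small --- a first-order bound on $c_k-1$, which is all the paper ever needs, would fail there. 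What each approach buys: the paper keeps every estimate coarse at the price of the auxiliary Lemmas~\ref{lemma:monotone} and~\ref{lemma:sym}; your argument dispenses with symmetry and monotonicity altogether and explains the structure of $\tilde\eta_k$, at the price of a more delicate expansion whose sign bookkeeping ($\sin(t_k)=(-1)^{k-1}\sqrt{1-\cos^2(t_k)}$, valid by the localization of $t_k$ in Lemma~\ref{lemma:coszeros}) must be done carefully. Two minor points to tighten: the constant in your final bound $\lvert\eta_k-\tilde\eta_k\rvert\le 16\,\mathrm e^{-\tilde t_k/2}$ is asserted rather than tracked (there is ample slack, since the only term of order $\mathrm e^{-t_k/2}$ has coefficient $1+o(1)$ and the hypothesis gives $\tilde t_k\ge 2\log(16/\varepsilon)+\pi/2$, hence $16\,\mathrm e^{-\tilde t_k/2}\le\mathrm e^{-\pi/4}\varepsilon<\varepsilon$); and your mean-value-theorem estimate for $t_k-\tilde t_k$ presupposes that $t_k$ is already localized near $\tilde t_k$, which is precisely the content of Lemma~\ref{lemma:coszeros} --- once that lemma is invoked for the localization, its exponential bound $\lvert t_k-\tilde t_k\rvert\le\pi\mathrm e^{-(k-1)\pi}$ makes the MVT refinement unnecessary.
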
 

For the proof see Appendix~\ref{app:h2}.
For the numerical experiments we use the exact representation from Theorem~\ref{h2basis} for $m < 10$ and the approximation from Theorem~\ref{h2approximation} for $m\ge 10$.

\subsection{Polynomial approximation on the unit interval} 

Next, we examine how the $H^1$ and $ H^2$ bases compares to polynomial approximation when points are distributed uniformly, i.e., $V_m = \Pi_m = \spn\{1, x, \dots, x^{m-1}\}$ and $\varrho_S(x) = 1/\beta(x) d\varrho_T(x) =  \mathrm d x$.
Polynomial approximation results often assume $f\in X^s$ with
\begin{align*}
    X^s
    \coloneqq \{f\colon[0,1]\to\mathds C : f,\dots,f^{(s-1)} \text{ absolute continuous},\, f^{(s)}\in BV([0,1])\}\,,
\end{align*}
where $BV([0,1])$ are all functions with bounded variation.
This assumtion is stronger than assuming $f\in H^s$ as the following remark shows.

\begin{remark}[$X^s\hookrightarrow H^{s+1/2-\varepsilon}$]\label{crazyfunctionspaces} For a rigorous investigation of the relation of $X^s$ and $H^s$, we need to define the Besov space $B_{p,q}^s$ for $p=1$, $q=\infty$, and integer smoothness $s$
    \begin{align*}
        B_{1,\infty}^{s}
        \coloneqq \Big\{ f\in L_1 : \sup_{h\neq 0} \frac{\|\Delta_h^2f^{(s-1)}\|_{ L_1}}{\lvert h\rvert} < \infty \Big\}
    \end{align*}
    with the finite difference $(\Delta_h f)(x) \coloneqq f(x+h)-f(x)$ and $\Delta_h^2 = \Delta_h\circ\Delta_h$, cf.\ \cite[Section~1.2.5]{Triebel92}.
    
    For $f\in X^s$ the derivative $f^{(s)}$ is of bounded variation.
    Thus, also the finite difference $\Delta_h^2 f$ is of bounded variation.
    In particular, $f^{(s)}\in L_1$ and, therefore, $f\in B_{1,\infty}^{s+1}$.
    By \cite[(2.3.2/23)]{Triebel92}, we further have $B_{1,\infty}^{s+1}\hookrightarrow B_{1,1}^{s+1-\varepsilon}$ for any $\varepsilon > 0$.
    Thus,
    \begin{align*}
        X^s
        \hookrightarrow B_{1,\infty}^{s+1}
        \hookrightarrow B_{1,1}^{s+1-\varepsilon}
\hookrightarrow H^{s+1/2-\varepsilon} \,,
    \end{align*}
    where the third embedding follows from the Sobolev inequality, cf.\ \cite[(2.7.1/1)]{Triebel83}, and the Sobolev space for non-integer smoothness $s$ consists of functions $f$ such that $\langle f, \eta_k\rangle \le Ck^{-s}$ for some constant $C<\infty$.
\end{remark} 

Assuming $f\in X^s$, we have a look into approximating with Legendre- and Chebyshev polynomials:

\begin{itemize}
\item
    The canonical choice for the target measure is $\mathrm d\varrho_T =  \mathrm dx$ and $\beta\equiv 1$ such that $\mathrm \varrho_S(x) = 1/\beta(x) \mathrm d\varrho_T(x) =  \mathrm d x$.
    Orthogonalizing the first $m-1$ monomials with respect to $\mathrm d\varrho_T(x) =  \mathrm dx$, we obtain the Legendre polynomials $P_k$.
    
    For the error of the projection, assuming $f\in  H^s$, the following was shown in \cite[Theorem~3.5]{Wang21}:
    \begin{align}\label{eq:legbound}
        e(f, V_m,  L_2)_{ L_2}^2
        \le \frac{2V}{\pi(s+1/2)(m-s)^{2s+1}} \,,
    \end{align}
    where $V$ is the total variation of $f^{(s)}$.
    With this stronger assumption $f\in X^s$ half an order is gained by polynomial approximation in contrast to the $H^s$ bases.
    This is expected as we require half an order of smoothness in $L_2$ more, cf.\ Remark~\ref{crazyfunctionspaces}. (In the later numerical experiments, we observe the gain of half an order for $ H^1$ and $ H^2$ as well.)

    A drawback comes with the Christoffel function $N(V_m,\cdot)$.
    Since $N(V_m, 0) = m^2$, cf.\ \eqref{eq:stupidlegendre}, this spoils the choice of $m$ to quadratic oversampling:
    \begin{align*}
        10m^2(\log(m)+t) \le n \,,
    \end{align*}
    which is usual for polynomial approximation in uniform points, cf.\ \cite{MNST14}.
\item
    When using the Chebyshev measure $\mathrm d \varrho_T(x) = (1-(2x-1)^2)^{-1/2}\;\mathrm dx$ we have to compensate with $\beta(x) = (1-(2x-1)^2)^{-1/2}$ to obtain uniform random samples as well.
    Orthogonalizing the first $m-1$ monomials with respect to the Chebyshev measure, we obtain the Chebyshev polynomials $T_k$, which are a BOS.
    
    As for the error, assuming $f\in X^s$, we use \cite[Theorem~7.1]{Trefethen13} or \cite[Theorem~6.16]{PPST18} to obtain the same bound as for Legendre polynomials \eqref{eq:legbound} but with respect to the $ L_2((0,1), (1-(2x-1)^2)^{-1/2}\mathrm dx)$ norm.

    As $\|\beta\|_{\infty}$ diverges at the border, this spoils the choice of the polynomial degree $m$ and our bound.
    Note, when we exclude some area around the border for sampling, it does not diverge and the resulting error can be controlled.
    This is called padding and was done in \cite[Section~4.1.2]{PS22}
\end{itemize}

Thus, with polynomial approximation we assume half an order of smoothness more, cf.\ Remark~\ref{crazyfunctionspaces}, which we also see in the approximation rate $\mathcal O(m^{s+1/2})$.

\begin{remark}
    Note, when using the Chebyshev polynomials and samples with respect to the Chebyshev measure, we have $\beta \equiv 1$.
    Since the Chebyshev polynomials are a BOS, this does not spoil our bounds.

    Furthermore, using the Legendre polynomials ($\mathrm d \varrho_T =  \mathrm dx$) and samples with respect to the Chebyshev measure ($\beta(x) = \pi (1-(2x-1)^2)^{1/2}$) works as well.
    To see this we use \cite[Lemma~5.1]{RW12}:
    \begin{align*}
        \sqrt{1-(2x-1)^2} \lvert P_k(x)\rvert^2
        \le \frac{2}{\pi}\Big(2+\frac 1 k\Big)
    \end{align*}
    for $k\ge 1$.
    Thus, $\|\beta(\cdot) N(V_m,\cdot)\|_\infty$ and $\|\beta(\cdot)\|_\infty$ are bounded and do not spoil the choice of polynomial degree $m$ nor the error bound.
\end{remark}

\subsection{Numerics on the unit interval} 

To support our findings, we give a numerical example.
As a test function we use
\begin{align}\label{eq:B2cut}
    f(x) = B_2^{\text{cut}}(x)
    \quad\text{with}\quad
    B_2^{\text{cut}}(x) = \begin{cases}
        -x^2+3/4 &\text{for }x\in [0, 1/2] \\
        x^2/2-3/2x+9/8 &\text{for } x\in[1/2, 1]\\
    \end{cases}
\end{align}
which was already considered in \cite{PV15, NP21}.
The function $B_2^{\text{cut}}$ is shown in Figure~\ref{fig:example_1d} and is a cutout of the B-spline of order two.
It and its first derivative are absolute continuous and the second derivative is of bounded variation.
Therefore $f\in X^3$ and the polynomial approximation bounds from above are applicable.
According to Remark~\ref{crazyfunctionspaces} we further have $f\in H^{5/2-\varepsilon}$ for any $\varepsilon>0$, i.e., there exists $C\ge 0$ such that for $k\ge 0$ it holds $\langle f, \eta_k\rangle_{ L_2} \le Ck^{-5/2+\varepsilon}$.
In particular, $f\in H^2$ and \eqref{eq:R} is applicable for approximating with the $H^1$ and $H^2$ bases.

We sample $f$ in $10\,000$ uniformly random points and add $0.1\%M$ Gaussian noise to obtain $\bm y = \bm f + \bm\varepsilon$, where $M = \max_{x\in[0,1]} f(x)-\min_{x\in[0,1]} f(x) = 5/8$.
For $V_m$ we consider the four choices from above:
The Chebyshev polynomials $V_m = \spn\{T_k\}_{k=0}^{m-1}$ ($\mathrm d\varrho_T(x) = (1-(2x-1)^2)^{-1/2}\;\mathrm dx$ and $\beta = \pi/2$);
the Legendre polynomials $V_m = \spn\{P_k\}_{k=0}^{m-1}$ ($\mathrm d\varrho_T(x) = \mathrm dx$ and $\beta \equiv 1$);
the first $m$ basis functions of $H^1$ from Theorem~\ref{h1basis}, and the the first $m$ basis functions of $H^2$ from Theorems~\ref{h2basis} and \ref{h2approximation} ($\mathrm d\varrho_T(x) = \mathrm dx$ and $\beta \equiv 1$ as well).
For $m = \dim(V_m)$ up to $1\,000$ we do the following:
\begin{enumerate}[(i)]
\item
    Compute the minimal and maximal singular values of $1/\sqrt n \bm W^{1/2}\bm L$, with $\bm W$ and $\bm L$ given in \eqref{eq:lsqrmatrix}.
\item
    We use least squares with $20$ iterations to obtain the approximation $S_m\bm y = \sum_{k=0}^{m-1} \hat g_k \eta_k$, defined in \eqref{eq:lsqrmatrix}.
\item
    We compute the $ L_2$-error by using Parseval's equality:
    \begin{align*}
        \|f-S_m\bm y\|_{ L_2}^2
        = \|f\|_{ L_2}^2-\sum_{k=0}^{m-1}\lvert \hat f_k\rvert^2 + \sum_{k=0}^{m-1} \lvert \hat f_k-\hat g_k\rvert^2\,,
    \end{align*}
    where the coefficients $\hat f_k = \langle f, \eta_k \rangle_{ L_2}$ are computed analyticaly.
\item
    We compute the split approximation error:
    \begin{align*}
        \|f-S_m\bm y\|_{ L_2}^2
        \le 
        2\|f-S_m\bm f\|_{ L_2}^2 + 2\|S_m\bm\varepsilon\|_{ L_2}^2 \,,
    \end{align*}
    where we compute both quantities separately, again, using Parseval's equality.
\end{enumerate}
The results are depicted in Figure~\ref{fig:experiment_1d}.
\begin{figure}
    \centering
    \begingroup
  \makeatletter
  \providecommand\color[2][]{\GenericError{(gnuplot) \space\space\space\@spaces}{Package color not loaded in conjunction with
      terminal option `colourtext'}{See the gnuplot documentation for explanation.}{Either use 'blacktext' in gnuplot or load the package
      color.sty in LaTeX.}\renewcommand\color[2][]{}}\providecommand\includegraphics[2][]{\GenericError{(gnuplot) \space\space\space\@spaces}{Package graphicx or graphics not loaded}{See the gnuplot documentation for explanation.}{The gnuplot epslatex terminal needs graphicx.sty or graphics.sty.}\renewcommand\includegraphics[2][]{}}\providecommand\rotatebox[2]{#2}\@ifundefined{ifGPcolor}{\newif\ifGPcolor
    \GPcolortrue
  }{}\@ifundefined{ifGPblacktext}{\newif\ifGPblacktext
    \GPblacktexttrue
  }{}\let\gplgaddtomacro\g@addto@macro
\gdef\gplbacktext{}\gdef\gplfronttext{}\makeatother
  \ifGPblacktext
\def\colorrgb#1{}\def\colorgray#1{}\else
\ifGPcolor
      \def\colorrgb#1{\color[rgb]{#1}}\def\colorgray#1{\color[gray]{#1}}\expandafter\def\csname LTw\endcsname{\color{white}}\expandafter\def\csname LTb\endcsname{\color{black}}\expandafter\def\csname LTa\endcsname{\color{black}}\expandafter\def\csname LT0\endcsname{\color[rgb]{1,0,0}}\expandafter\def\csname LT1\endcsname{\color[rgb]{0,1,0}}\expandafter\def\csname LT2\endcsname{\color[rgb]{0,0,1}}\expandafter\def\csname LT3\endcsname{\color[rgb]{1,0,1}}\expandafter\def\csname LT4\endcsname{\color[rgb]{0,1,1}}\expandafter\def\csname LT5\endcsname{\color[rgb]{1,1,0}}\expandafter\def\csname LT6\endcsname{\color[rgb]{0,0,0}}\expandafter\def\csname LT7\endcsname{\color[rgb]{1,0.3,0}}\expandafter\def\csname LT8\endcsname{\color[rgb]{0.5,0.5,0.5}}\else
\def\colorrgb#1{\color{black}}\def\colorgray#1{\color[gray]{#1}}\expandafter\def\csname LTw\endcsname{\color{white}}\expandafter\def\csname LTb\endcsname{\color{black}}\expandafter\def\csname LTa\endcsname{\color{black}}\expandafter\def\csname LT0\endcsname{\color{black}}\expandafter\def\csname LT1\endcsname{\color{black}}\expandafter\def\csname LT2\endcsname{\color{black}}\expandafter\def\csname LT3\endcsname{\color{black}}\expandafter\def\csname LT4\endcsname{\color{black}}\expandafter\def\csname LT5\endcsname{\color{black}}\expandafter\def\csname LT6\endcsname{\color{black}}\expandafter\def\csname LT7\endcsname{\color{black}}\expandafter\def\csname LT8\endcsname{\color{black}}\fi
  \fi
    \setlength{\unitlength}{0.0500bp}\ifx\gptboxheight\undefined \newlength{\gptboxheight}\newlength{\gptboxwidth}\newsavebox{\gptboxtext}\fi \setlength{\fboxrule}{0.5pt}\setlength{\fboxsep}{1pt}\definecolor{tbcol}{rgb}{1,1,1}\begin{picture}(9060.00,4520.00)\gplgaddtomacro\gplbacktext{\csname LTb\endcsname \put(582,2914){\makebox(0,0)[r]{\strut{}$10^{-2}$}}\csname LTb\endcsname \put(582,3630){\makebox(0,0)[r]{\strut{}$10^{0}$}}\csname LTb\endcsname \put(1009,2352){\makebox(0,0){\strut{}$10^{1}$}}\csname LTb\endcsname \put(1539,2352){\makebox(0,0){\strut{}$10^{2}$}}\csname LTb\endcsname \put(2069,2352){\makebox(0,0){\strut{}$10^{3}$}}}\gplgaddtomacro\gplfronttext{\csname LTb\endcsname \put(1353,2147){\makebox(0,0){\strut{}$m$}}\csname LTb\endcsname \put(1353,4295){\makebox(0,0){\strut{}Chebyshev}}}\gplgaddtomacro\gplbacktext{\csname LTb\endcsname \put(2842,2914){\makebox(0,0)[r]{\strut{}$10^{-2}$}}\csname LTb\endcsname \put(2842,3630){\makebox(0,0)[r]{\strut{}$10^{0}$}}\csname LTb\endcsname \put(3269,2352){\makebox(0,0){\strut{}$10^{1}$}}\csname LTb\endcsname \put(3799,2352){\makebox(0,0){\strut{}$10^{2}$}}\csname LTb\endcsname \put(4329,2352){\makebox(0,0){\strut{}$10^{3}$}}}\gplgaddtomacro\gplfronttext{\csname LTb\endcsname \put(3613,2147){\makebox(0,0){\strut{}$m$}}\csname LTb\endcsname \put(3613,4295){\makebox(0,0){\strut{}Legendre}}}\gplgaddtomacro\gplbacktext{\csname LTb\endcsname \put(5102,2914){\makebox(0,0)[r]{\strut{}$10^{-2}$}}\csname LTb\endcsname \put(5102,3630){\makebox(0,0)[r]{\strut{}$10^{0}$}}\csname LTb\endcsname \put(5529,2352){\makebox(0,0){\strut{}$10^{1}$}}\csname LTb\endcsname \put(6059,2352){\makebox(0,0){\strut{}$10^{2}$}}\csname LTb\endcsname \put(6589,2352){\makebox(0,0){\strut{}$10^{3}$}}}\gplgaddtomacro\gplfronttext{\csname LTb\endcsname \put(5873,2147){\makebox(0,0){\strut{}$m$}}\csname LTb\endcsname \put(5873,4295){\makebox(0,0){\strut{}$\mathrm H^1$ basis}}}\gplgaddtomacro\gplbacktext{\csname LTb\endcsname \put(7362,2914){\makebox(0,0)[r]{\strut{}$10^{-2}$}}\csname LTb\endcsname \put(7362,3630){\makebox(0,0)[r]{\strut{}$10^{0}$}}\csname LTb\endcsname \put(7789,2352){\makebox(0,0){\strut{}$10^{1}$}}\csname LTb\endcsname \put(8319,2352){\makebox(0,0){\strut{}$10^{2}$}}\csname LTb\endcsname \put(8849,2352){\makebox(0,0){\strut{}$10^{3}$}}}\gplgaddtomacro\gplfronttext{\csname LTb\endcsname \put(8133,2147){\makebox(0,0){\strut{}$m$}}\csname LTb\endcsname \put(8133,4295){\makebox(0,0){\strut{}$\mathrm H^2$ basis}}}\gplgaddtomacro\gplbacktext{\csname LTb\endcsname \put(582,902){\makebox(0,0)[r]{\strut{}$10^{-10}$}}\csname LTb\endcsname \put(582,1422){\makebox(0,0)[r]{\strut{}$10^{-6}$}}\csname LTb\endcsname \put(582,1942){\makebox(0,0)[r]{\strut{}$10^{-2}$}}\csname LTb\endcsname \put(1009,307){\makebox(0,0){\strut{}$10^{1}$}}\csname LTb\endcsname \put(1539,307){\makebox(0,0){\strut{}$10^{2}$}}\csname LTb\endcsname \put(2069,307){\makebox(0,0){\strut{}$10^{3}$}}}\gplgaddtomacro\gplfronttext{\csname LTb\endcsname \put(1353,102){\makebox(0,0){\strut{}$m$}}}\gplgaddtomacro\gplbacktext{\csname LTb\endcsname \put(2842,902){\makebox(0,0)[r]{\strut{}$10^{-10}$}}\csname LTb\endcsname \put(2842,1422){\makebox(0,0)[r]{\strut{}$10^{-6}$}}\csname LTb\endcsname \put(2842,1942){\makebox(0,0)[r]{\strut{}$10^{-2}$}}\csname LTb\endcsname \put(3269,307){\makebox(0,0){\strut{}$10^{1}$}}\csname LTb\endcsname \put(3799,307){\makebox(0,0){\strut{}$10^{2}$}}\csname LTb\endcsname \put(4329,307){\makebox(0,0){\strut{}$10^{3}$}}}\gplgaddtomacro\gplfronttext{\csname LTb\endcsname \put(3613,102){\makebox(0,0){\strut{}$m$}}}\gplgaddtomacro\gplbacktext{\csname LTb\endcsname \put(5102,902){\makebox(0,0)[r]{\strut{}$10^{-10}$}}\csname LTb\endcsname \put(5102,1422){\makebox(0,0)[r]{\strut{}$10^{-6}$}}\csname LTb\endcsname \put(5102,1942){\makebox(0,0)[r]{\strut{}$10^{-2}$}}\csname LTb\endcsname \put(5529,307){\makebox(0,0){\strut{}$10^{1}$}}\csname LTb\endcsname \put(6059,307){\makebox(0,0){\strut{}$10^{2}$}}\csname LTb\endcsname \put(6589,307){\makebox(0,0){\strut{}$10^{3}$}}}\gplgaddtomacro\gplfronttext{\csname LTb\endcsname \put(5873,102){\makebox(0,0){\strut{}$m$}}}\gplgaddtomacro\gplbacktext{\csname LTb\endcsname \put(7362,902){\makebox(0,0)[r]{\strut{}$10^{-10}$}}\csname LTb\endcsname \put(7362,1422){\makebox(0,0)[r]{\strut{}$10^{-6}$}}\csname LTb\endcsname \put(7362,1942){\makebox(0,0)[r]{\strut{}$10^{-2}$}}\csname LTb\endcsname \put(7789,307){\makebox(0,0){\strut{}$10^{1}$}}\csname LTb\endcsname \put(8319,307){\makebox(0,0){\strut{}$10^{2}$}}\csname LTb\endcsname \put(8849,307){\makebox(0,0){\strut{}$10^{3}$}}}\gplgaddtomacro\gplfronttext{\csname LTb\endcsname \put(8133,102){\makebox(0,0){\strut{}$m$}}}\gplbacktext
    \put(0,0){\includegraphics[width={453.00bp},height={226.00bp}]{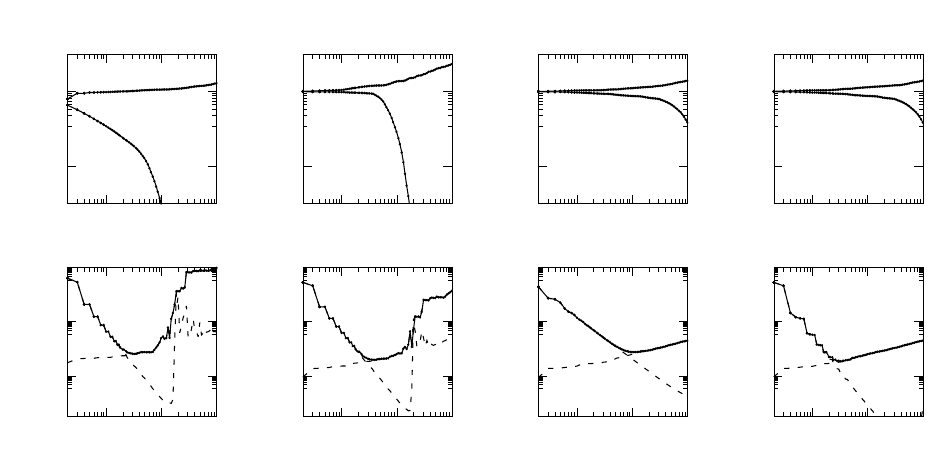}}\gplfronttext
  \end{picture}\endgroup
     \caption{One-dimensional experiment for different choices of $V_m$.
    Top row: minimal and maximal singular value of $1/\sqrt n \bm W^{1/2}\bm L$.
    Bottom row: the $L_2$-approximation error $\|f-S_m\bm y\|_{L_2}^2$ (solid line) split into the error for exact function values $\|f-S_m\bm f\|_{L_2}^2$ and the noise error $\|S_m\bm\varepsilon\|_{L_2}^2$ (dashed lines) with respect to $m$.}\label{fig:experiment_1d}
\end{figure}

\begin{itemize}
\item
    The smallest singular values for the Chebyshev polynomials and the Legendre polynomials decay rapidly for bigger $m$.
    This coincides with the violation of the assumtion in Lemma~\ref{l:trand} for small $m$:
    \begin{align*}
        10\|\beta(\cdot) N(V_m,\cdot)\|_\infty(\log(m)+t) \le n,
    \end{align*}
    where $\|\beta(\cdot) N(V_m,\cdot)\|_\infty$ is unbounded in the Chebyshev case and quadratic in the Legendre case, cf.\ \eqref{eq:stupidlegendre}.
    In this experiment, for $m=1\,000$ the condition number $\sigma_{\max}(\bm W^{1/2}\bm L)/\sigma_{\min}(\bm W^{1/2}\bm L)$ exceeded $10^{29}$ for the algebraic polynomials and was below 14 for the $ H^s$ basis.
\item
    The error for exact function values $\|f-S_m\bm f\|_{ L_2}^2$ has decay $3/2$ for $ H^1$ and $5/2$ for the other bases.
    This conforms with the theory for the polynomial bases.
    For the $ H^1$ and $ H^2$ bases the theory predicted only decay rate $1$ and $2$, cf.\ Theorems~\ref{h1basis}, \ref{h2basis}, and \eqref{eq:R}.
\item
    For the noise error $\|S_m\bm\varepsilon\|_{ L_2}^2$ we observe linear growth in $m = \dim(V_m)$ as predicted in Theorem~\ref{L2w}.
    Furthermore, this error is bigger by a factor of around $40$ in the Chebyshev case compared to the others.
    The maximal weight $\|\bm W\|_\infty$ in this case is around $40$ as well.
    The error due to noise in our bound has the factor $\|\beta\|_\infty$ which can be replaced by $\|\bm W\|_\infty$ to sharpen the bound and explain this effect.
\end{itemize}

This numerical experiment and the earlier theoretical discussion shows, that the $ H^1$ and the $ H^2$ bases are suitable for approximating functions on the unit interval given in uniform random samples.
They are numerically stable in contrast to polynomial approximation with Chebyshev or Legendre.
In particular, the least squares matrix is well-conditioned and we can limit the iterations when using an iterative solver, cf.\ \cite[Theorem~3.1.1]{Greenbaum97}.

\subsection{Sobolev spaces with dominating mixed smoothness on the unit cube} 

The ideas from Subsection~\ref{subs:hs} can be extended to higher dimensions using the concept of dominating mixed smoothness.
We focus on the case of $ H^1$ and $ H^2$, but the same can be done for polynomials as well, cf.\ \cite[Section~8.5.1]{STW11}.

Let $D = [0,1]^d$ be the $d$-dimensional unit cube equipped with the Lebesgue measure $ \mathrm d\bm x$.
The Sobolev space with dominating mixed smoothness of integer degree $s\ge 0$ is given by $\Hmix^s = \Hmix^s(0,1)^d =  H^s(0,1) \otimes\dots\otimes H^s(0,1)$.
The inner product of these Hilbert spaces is given by
\begin{align*}
    \langle f, g \rangle_{\Hmix^s}
    = \sum_{\bm j\in \{0, s\}^d} \langle D^{(\bm j)} f, D^{(\bm j)} g \rangle_{ L_2} \,.
\end{align*}
With $\sigma_k$ and $\eta_k$ the singular values and eigenfunctions of $ H^s$, the singular values and eigenfunctions of $W = \Id\herm\circ\Id \colon \Hmix^s \to \Hmix^s$ extend as follows:
\begin{align*}
    \sigma_{\bm k}^2 = \prod_{j=1}^{d} \sigma_{k_j}^2
    \quad\text{and}\quad
    \eta_{\bm k}(\bm x) = \prod_{j=1}^{d} \eta_{k_j}(x_j) \,.
\end{align*}
To obtain the eigenfunctions corresponding to the smallest singular values, we now work with multi-indices $\bm k$.
The indices corresponding to the largest singular values lie on a, so called, hyperbolic cross
\begin{align*}
    I_R(\Hmix^s)
    \coloneqq \Big\{ \bm k\in\mathds N^d : \prod_{j=1}^{d} \sigma_{k_j}^2 \ge R \Big\} \,.
\end{align*}
For $V_m = \spn\{\eta_{\bm k} : \bm k\in I_R(\Hmix^s)\}$ and $f\in\Hmix^s$, we obtain by \eqref{eq:R}
\begin{align*}
    e(f,V_m, L_2)_{ L_2}
    \le R\|f\|_{\Hmix^s}^2\,.
\end{align*}
In Figure~\ref{fig:hc} we have equally sized index sets for $\Hmix^1$ and $\Hmix^2$.
Note, that $R$ is smaller for $\Hmix^2$ as its singular values decay faster, cf.\ Theorems~\ref{h1basis} and \ref{h2basis}.
\begin{figure}
    \centering
    \begingroup
  \makeatletter
  \providecommand\color[2][]{\GenericError{(gnuplot) \space\space\space\@spaces}{Package color not loaded in conjunction with
      terminal option `colourtext'}{See the gnuplot documentation for explanation.}{Either use 'blacktext' in gnuplot or load the package
      color.sty in LaTeX.}\renewcommand\color[2][]{}}\providecommand\includegraphics[2][]{\GenericError{(gnuplot) \space\space\space\@spaces}{Package graphicx or graphics not loaded}{See the gnuplot documentation for explanation.}{The gnuplot epslatex terminal needs graphicx.sty or graphics.sty.}\renewcommand\includegraphics[2][]{}}\providecommand\rotatebox[2]{#2}\@ifundefined{ifGPcolor}{\newif\ifGPcolor
    \GPcolortrue
  }{}\@ifundefined{ifGPblacktext}{\newif\ifGPblacktext
    \GPblacktexttrue
  }{}\let\gplgaddtomacro\g@addto@macro
\gdef\gplbacktext{}\gdef\gplfronttext{}\makeatother
  \ifGPblacktext
\def\colorrgb#1{}\def\colorgray#1{}\else
\ifGPcolor
      \def\colorrgb#1{\color[rgb]{#1}}\def\colorgray#1{\color[gray]{#1}}\expandafter\def\csname LTw\endcsname{\color{white}}\expandafter\def\csname LTb\endcsname{\color{black}}\expandafter\def\csname LTa\endcsname{\color{black}}\expandafter\def\csname LT0\endcsname{\color[rgb]{1,0,0}}\expandafter\def\csname LT1\endcsname{\color[rgb]{0,1,0}}\expandafter\def\csname LT2\endcsname{\color[rgb]{0,0,1}}\expandafter\def\csname LT3\endcsname{\color[rgb]{1,0,1}}\expandafter\def\csname LT4\endcsname{\color[rgb]{0,1,1}}\expandafter\def\csname LT5\endcsname{\color[rgb]{1,1,0}}\expandafter\def\csname LT6\endcsname{\color[rgb]{0,0,0}}\expandafter\def\csname LT7\endcsname{\color[rgb]{1,0.3,0}}\expandafter\def\csname LT8\endcsname{\color[rgb]{0.5,0.5,0.5}}\else
\def\colorrgb#1{\color{black}}\def\colorgray#1{\color[gray]{#1}}\expandafter\def\csname LTw\endcsname{\color{white}}\expandafter\def\csname LTb\endcsname{\color{black}}\expandafter\def\csname LTa\endcsname{\color{black}}\expandafter\def\csname LT0\endcsname{\color{black}}\expandafter\def\csname LT1\endcsname{\color{black}}\expandafter\def\csname LT2\endcsname{\color{black}}\expandafter\def\csname LT3\endcsname{\color{black}}\expandafter\def\csname LT4\endcsname{\color{black}}\expandafter\def\csname LT5\endcsname{\color{black}}\expandafter\def\csname LT6\endcsname{\color{black}}\expandafter\def\csname LT7\endcsname{\color{black}}\expandafter\def\csname LT8\endcsname{\color{black}}\fi
  \fi
    \setlength{\unitlength}{0.0500bp}\ifx\gptboxheight\undefined \newlength{\gptboxheight}\newlength{\gptboxwidth}\newsavebox{\gptboxtext}\fi \setlength{\fboxrule}{0.5pt}\setlength{\fboxsep}{1pt}\definecolor{tbcol}{rgb}{1,1,1}\begin{picture}(7080.00,2820.00)\gplgaddtomacro\gplbacktext{}\gplgaddtomacro\gplfronttext{\csname LTb\endcsname \put(1764,2493){\makebox(0,0){\strut{}\shortstack{$I_R(\Hmix^1)$ with $254$ frequencies\\ and $R = 5.3\cdot 10^{-5}$}}}}\gplgaddtomacro\gplbacktext{}\gplgaddtomacro\gplfronttext{\csname LTb\endcsname \put(5294,2493){\makebox(0,0){\strut{}\shortstack{$I_R(\Hmix^2)$ with $254$ frequencies\\and $R = 8.3\cdot 10^{-8}$}}}}\gplbacktext
    \put(0,0){\includegraphics[width={354.00bp},height={141.00bp}]{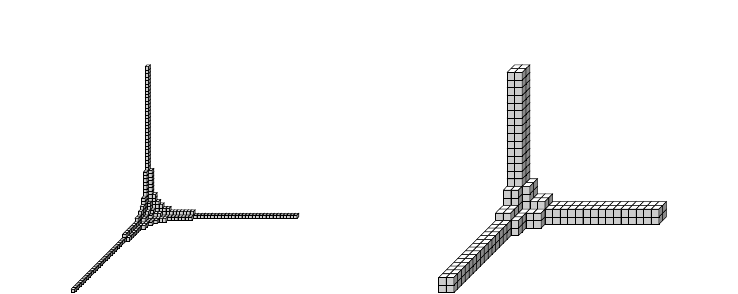}}\gplfronttext
  \end{picture}\endgroup
     \caption{Hyperbolic cross in three dimensions.}\label{fig:hc}
\end{figure}

\subsection{Numerics on the unit cube} 

For a numerical experiment we do the same as in the one-dimensional case but only consider the $\Hmix^2$ case.
For our test function we tensorize the B-Spline cutout
\begin{align*}
    f(\bm x) = \prod_{j=1}^{d} B_2^{\text{cut}}(x_j)
\end{align*}
where $B_2^{\text{cut}}$ was defined in \eqref{eq:B2cut}.
We increase the dimension to $d=5$ and the number of samples to $1\,000\,000$ and use Gaussian noise with variance $\sigma^2 \in \{0.00,\, 0.01M,\, 0.03M\}$ where $M = \max_{\bm x\in[0,1]^d} f(\bm x)-\min_{\bm x\in[0,1]^d} f(\bm x) = 5/8$.

Let $V_m = \spn\{\eta_{\bm k} : \bm k\in I_R(\Hmix^2)\}$ of size $m$ with $\eta_{\bm k}$ the tensorized $\Hmix^2$ basis, cf.\ Theorems~\ref{h2basis} and \ref{h2approximation}.
Since the $\Hmix^2$ basis is a BOS, we obtain
\begin{align*}
    \frac{N(V_m)}{m} \le 6 \,.
\end{align*}
With $t=6$, we satisfy the assumptions of Theorem~\ref{L2w} for $m\le 12\,250$ and obtain a probability exceeding $0.99$ for the error bound in Theorem~\ref{L2w}.
For $m = \dim(V_m)$ up to $10\,000$ we do the following:
\begin{enumerate}[(i)]
\item
    We use plain least squares with $20$ iterations to obtain the approximation $S_m\bm y = \sum_{k=0}^{m-1} \hat g_k \eta_k$, defined in \eqref{eq:lsqrmatrix}.
\item
    We compute the $ L_2$-error by using Parseval's equality analog to the one-dimensional case.
\item
    We compute our bound:
    Applying Theorem~\ref{L2w} using \eqref{eq:L2Linfty}, $t=6$, and $n = 1\,000\,000$, we obtain
    \begin{align*}
        \| f - S_m \bm y \|_{ L_2}^{2}
        &\le 14 \Big(1+\sqrt{\frac{6 N(V_m)}{n}}\Big)^2 e(f,V_m, L_2)_{ L_2}^2 \\
        &\quad+ 
        \frac{m}{n}
        \Big( 138 B \sqrt{\sigma^2} + 4\sigma^2 \Big)
        +
        0.0031 B^2
    \end{align*}
    with probability exceeding $0.99$ and all the remaining quantities known in our experiment.
\end{enumerate}

\begin{figure}
    \centering
    \begingroup
  \makeatletter
  \providecommand\color[2][]{\GenericError{(gnuplot) \space\space\space\@spaces}{Package color not loaded in conjunction with
      terminal option `colourtext'}{See the gnuplot documentation for explanation.}{Either use 'blacktext' in gnuplot or load the package
      color.sty in LaTeX.}\renewcommand\color[2][]{}}\providecommand\includegraphics[2][]{\GenericError{(gnuplot) \space\space\space\@spaces}{Package graphicx or graphics not loaded}{See the gnuplot documentation for explanation.}{The gnuplot epslatex terminal needs graphicx.sty or graphics.sty.}\renewcommand\includegraphics[2][]{}}\providecommand\rotatebox[2]{#2}\@ifundefined{ifGPcolor}{\newif\ifGPcolor
    \GPcolortrue
  }{}\@ifundefined{ifGPblacktext}{\newif\ifGPblacktext
    \GPblacktexttrue
  }{}\let\gplgaddtomacro\g@addto@macro
\gdef\gplbacktext{}\gdef\gplfronttext{}\makeatother
  \ifGPblacktext
\def\colorrgb#1{}\def\colorgray#1{}\else
\ifGPcolor
      \def\colorrgb#1{\color[rgb]{#1}}\def\colorgray#1{\color[gray]{#1}}\expandafter\def\csname LTw\endcsname{\color{white}}\expandafter\def\csname LTb\endcsname{\color{black}}\expandafter\def\csname LTa\endcsname{\color{black}}\expandafter\def\csname LT0\endcsname{\color[rgb]{1,0,0}}\expandafter\def\csname LT1\endcsname{\color[rgb]{0,1,0}}\expandafter\def\csname LT2\endcsname{\color[rgb]{0,0,1}}\expandafter\def\csname LT3\endcsname{\color[rgb]{1,0,1}}\expandafter\def\csname LT4\endcsname{\color[rgb]{0,1,1}}\expandafter\def\csname LT5\endcsname{\color[rgb]{1,1,0}}\expandafter\def\csname LT6\endcsname{\color[rgb]{0,0,0}}\expandafter\def\csname LT7\endcsname{\color[rgb]{1,0.3,0}}\expandafter\def\csname LT8\endcsname{\color[rgb]{0.5,0.5,0.5}}\else
\def\colorrgb#1{\color{black}}\def\colorgray#1{\color[gray]{#1}}\expandafter\def\csname LTw\endcsname{\color{white}}\expandafter\def\csname LTb\endcsname{\color{black}}\expandafter\def\csname LTa\endcsname{\color{black}}\expandafter\def\csname LT0\endcsname{\color{black}}\expandafter\def\csname LT1\endcsname{\color{black}}\expandafter\def\csname LT2\endcsname{\color{black}}\expandafter\def\csname LT3\endcsname{\color{black}}\expandafter\def\csname LT4\endcsname{\color{black}}\expandafter\def\csname LT5\endcsname{\color{black}}\expandafter\def\csname LT6\endcsname{\color{black}}\expandafter\def\csname LT7\endcsname{\color{black}}\expandafter\def\csname LT8\endcsname{\color{black}}\fi
  \fi
    \setlength{\unitlength}{0.0500bp}\ifx\gptboxheight\undefined \newlength{\gptboxheight}\newlength{\gptboxwidth}\newsavebox{\gptboxtext}\fi \setlength{\fboxrule}{0.5pt}\setlength{\fboxsep}{1pt}\definecolor{tbcol}{rgb}{1,1,1}\begin{picture}(6800.00,2820.00)\gplgaddtomacro\gplbacktext{\csname LTb\endcsname \put(530,871){\makebox(0,0)[r]{\strut{}$10^{-6}$}}\csname LTb\endcsname \put(530,1310){\makebox(0,0)[r]{\strut{}$10^{-4}$}}\csname LTb\endcsname \put(530,1748){\makebox(0,0)[r]{\strut{}$10^{-2}$}}\csname LTb\endcsname \put(530,2187){\makebox(0,0)[r]{\strut{}$10^{0}$}}\csname LTb\endcsname \put(890,448){\makebox(0,0){\strut{}$10^{2}$}}\csname LTb\endcsname \put(1506,448){\makebox(0,0){\strut{}$10^{3}$}}\csname LTb\endcsname \put(2121,448){\makebox(0,0){\strut{}$10^{4}$}}}\gplgaddtomacro\gplfronttext{\csname LTb\endcsname \put(1353,142){\makebox(0,0){\strut{}$m$}}\csname LTb\endcsname \put(1353,2493){\makebox(0,0){\strut{}$\sigma^2=0.00$}}}\gplgaddtomacro\gplbacktext{\csname LTb\endcsname \put(2790,959){\makebox(0,0)[r]{\strut{}$10^{-4}$}}\csname LTb\endcsname \put(2790,1573){\makebox(0,0)[r]{\strut{}$10^{-2}$}}\csname LTb\endcsname \put(2790,2187){\makebox(0,0)[r]{\strut{}$10^{0}$}}\csname LTb\endcsname \put(3150,448){\makebox(0,0){\strut{}$10^{2}$}}\csname LTb\endcsname \put(3766,448){\makebox(0,0){\strut{}$10^{3}$}}\csname LTb\endcsname \put(4381,448){\makebox(0,0){\strut{}$10^{4}$}}}\gplgaddtomacro\gplfronttext{\csname LTb\endcsname \put(3613,142){\makebox(0,0){\strut{}$m$}}\csname LTb\endcsname \put(3613,2493){\makebox(0,0){\strut{}$\sigma^2=0.01M$}}}\gplgaddtomacro\gplbacktext{\csname LTb\endcsname \put(5050,652){\makebox(0,0)[r]{\strut{}$10^{-4}$}}\csname LTb\endcsname \put(5050,1266){\makebox(0,0)[r]{\strut{}$10^{-2}$}}\csname LTb\endcsname \put(5050,1880){\makebox(0,0)[r]{\strut{}$10^{0}$}}\csname LTb\endcsname \put(5410,448){\makebox(0,0){\strut{}$10^{2}$}}\csname LTb\endcsname \put(6026,448){\makebox(0,0){\strut{}$10^{3}$}}\csname LTb\endcsname \put(6641,448){\makebox(0,0){\strut{}$10^{4}$}}}\gplgaddtomacro\gplfronttext{\csname LTb\endcsname \put(5873,142){\makebox(0,0){\strut{}$m$}}\csname LTb\endcsname \put(5873,2493){\makebox(0,0){\strut{}$\sigma^2=0.03M$}}}\gplbacktext
    \put(0,0){\includegraphics[width={340.00bp},height={141.00bp}]{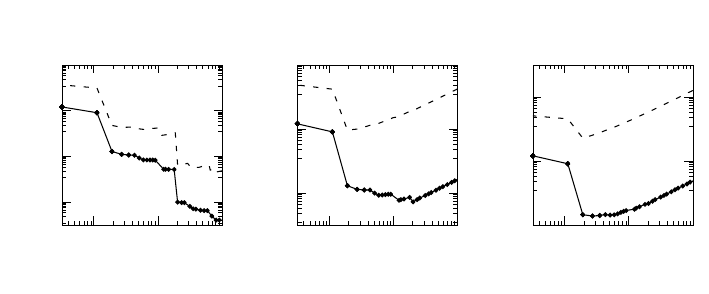}}\gplfronttext
  \end{picture}\endgroup
     \caption{Five-dimensional experiment for $\Hmix^2$. The solid lines represent the $ L_2$-error $\|f-S_m\bm y\|_{ L_2}^2$ and the dashed lines the bound from Theorems~\ref{L2wo} and \ref{L2w}.}
\end{figure}
The results are depicted in Figure~\ref{h2approximation}.
\begin{itemize}
\item 
    The bounds capture the error behaviour well.
    But it seems that there is room for improvement in the constants, especially in the experiments with noise.
    Here, improving constants in the Hanson-Wright inequality in Theorem~\ref{hanson-wright} could be a starting point.
\item
    Furthermor, this experiment shows, that the $ H^2$ basis is easily suitable for high-dimensional approximation as well.
\end{itemize}
 
\section*{Acknowledgement}
I would like to thank Tino~Ullrich for many helpful discussions and his expertise with function spaces.
Further, I would like to thank Michael~Schmischke for several insights on approximation on the unit cube, Sergei~Pereverzyev for his knowledge on regularization theory as well as Ralf~Hielscher and Daniel~Potts for further discussions.

\appendix
\section{Calculations for the $H^2(0,1)$ basis}\label{app:h2}
\begin{proof}[Proof of the first part of Theorem~\ref{h2basis}] Analogously to Theorem~\ref{h1basis}, for $\sigma$ a singular value of $W$ with corresponding eigenfunction $\eta\in \Hmix^2$, we obtain the following differential equation
    \begin{align*}
        \frac{1-\sigma^2}{\sigma^2}\eta = \eta^{(4)}
        \quad\text{with}\quad
        \eta^{(2)}(0)
        = \eta^{(2)}(1)
        = \eta^{(3)}(0)
        = \eta^{(3)}(1)
        = 0.
    \end{align*}

    Now we distinguish three cases for the value of $\sigma^2$: 

    \textbf{First case.}
    Let us assume $\sigma^2 = 1$.
    The ansatz function becomes
        \begin{align*}
        \eta(x)
        = A + Bx + Cx^2 + Dx^3.
        \end{align*}
    From the conditions $\eta^{(2)}(0) = \eta^{(3)}(0) = 0$ we obtain $D = C = 0$.
    The two remaining degrees of freedom are restricted by demanding $ L_2(0,1)$-orthonormality.
    By simple calculus we obtain the proposed eigenfunctions $\eta_0$ and $\eta_1$.
    
    \textbf{Second case.}
    Lets assume $(1-\sigma^2)/\sigma^2 > 0 \Leftrightarrow \sigma^2 < 1$.
    Introducing $t \coloneqq \sqrt[4]{(1-\sigma^2)/\sigma^2}$, we use the ansatz
    \begin{align*}
        \eta(x)
        = A\cos(tx)+B\sin(tx)+C\cosh(tx)+D\sinh(tx).
    \end{align*}
The conditions $\eta^{(2)}(0) = \eta^{(3)}(0) = 0$ transform to $A = C$ and $B = D$, respectively.
    The conditions $\eta^{(2)}(1) = \eta^{(3)}(1) = 0$ can be put into a system of equations:
    \begin{align*}
        \begin{bmatrix} \cosh(t)-\cos(t) & \sinh(t)-\sin(t) \\ \sinh(t)+\sin(t) & \cosh(t)-\cos(t) \end{bmatrix}
        \begin{bmatrix} A \\ B \end{bmatrix}
         = \bm 0
    \end{align*}
    or, by using $\cosh^2(t)-\sinh^2(t) = \cos^2(t)+\sin^2(t) = 1$, equivalently
    \begin{align*}
        \begin{bmatrix} \cosh(t)-\cos(t) & \sinh(t)-\sin(t) \\ 0 & 1-\cosh(t)\cos(t) \end{bmatrix}
        \begin{bmatrix} A \\ B \end{bmatrix}
         = \bm 0.
    \end{align*}
    For non-trivial solutions we need non-regularity of that matrix which transforms to the condition
    $ \cosh(t)\cos(t) = 1 $.
    With the leftover degree of freedom we choose
    \begin{align*}
        A = C = 1
        \quad\text{and}\quad
        B = D = -\frac{\cosh(t)-\cos(t)}{\sinh(t)-\sin(t)}
    \end{align*}
    and obtain $\eta_k$ for $k\ge 2$ as proposed in the theorem.

    For the $ L_2$-norm we obtain
    \begin{align*}
        &\int_0^1 \lvert \eta_n\rvert^2 \; dx
        \quad= \int_0^1 (\cosh(tx)+\cos(tx))^2 \; dx
        + B^2 \int_0^1 (\sinh(tx)+\sin(tx))^2 \; dx \\
        &\qquad+ 2B \int_0^1 (\cosh(tx)+\cos(tx))(\sinh(tx)+\sin(tx)) \; dx \\
        &\quad= 1+ \frac{\sin(2t)+\sinh(2t)+4\cos(t)\sinh(t)+4\sin(t)\cosh(t)}{4t} \\
        &\qquad +B^2 \frac{-\sin(2t)+\sinh(2t)-4\cos(t)\sinh(t)+4\sin(t)\cosh(t)}{4t}
        + 2B\frac{(\sin(t)+\sinh(t))^2}{2t} \\
        &\quad= 1+\frac{1+B^2}{4t}(\sinh(2t)+4\sin(t)\cosh(t)) + \frac{1-B^2}{4t}(\sin(2t)+4\cos(t)\sinh(t)) \\
        &\qquad +B\frac{(\sin(t)+\sinh(t))^2}{t} \\
        &\quad= 1+\frac{1+B^2}{2t}\cosh(t)(\sinh(t)+2\sin(t)) + \frac{1-B^2}{2t}\cos(t)(\sin(t)+2\sinh(t)) \\
        &\qquad +B\frac{(\sin(t)+\sinh(t))^2}{t} \,.
    \end{align*}
    Using $\cos(t)\cosh(t) = 1$, we obtain
    \begin{align}\label{eq:1B}
        1+B^2 = 2\frac{\sinh(t)}{\sinh(t)-\sin(t)}
        \quad\text{and}\quad
        1-B^2 = -2\frac{\sin(t)}{\sinh(t)-\sin(t)} \,.
    \end{align}
    Thus,
    \begin{align*}
        \int_0^1 \lvert \eta_n\rvert^2 \; dx
        &= 1+\frac{1}{t(\sinh(t)-\sin(t))} \Bigg[ \sinh(t)\cosh(t)(\sinh(t)+2\sin(t)) \\
        &\quad- \sin(t)\cos(t)(\sin(t)+2\sinh(t))
        -(\cosh(t)-\cos(t))(\sin(t)+\sinh(t))^2 \Bigg] \\
        &= 1+\frac{\cos(t)\sinh^2(t)-\cosh(t)\sin^2(t)}{t(\sinh(t)-\sin(t))} \\
        &= 1+\frac{\cos(t)\cosh^2(t)-\cos(t)-\cosh(t)+\cosh(t)\cos^2(t)}{t(\sinh(t)-\sin(t))}
    \end{align*}
    where $\cos^2(t)+\sin^2(t) = \cosh^2(t)-\sinh^2(t) = 1$ was used in the last equality.
    Using $\cosh(t)\cos(t) = 1$, the latter summand evaluates to zero and we have proven the $ L_2$-normality.

    \textbf{Third case.}
    Assume $\sigma^2 > 1$. Set $t \coloneqq \sqrt[4]{(\sigma^2-1)/(\sigma^2)}$. The ansatz becomes
    \begin{align*}
        \eta(x)
        &= A\cosh(tx)\cos(tx)
        + B\cosh(tx)\sin(tx)\\
        &\phantom= + C\sinh(tx)\cos(tx)
        + D\sinh(tx)\sin(tx).
    \end{align*}
    The conditions $\eta^{(2)}(0) = \eta^{(3)}(0) = 0$ transform to $D=0$ and $B=C$.
    The two remaining degrees of freedom are fixed by the conditions $\eta^{(2)}(1) = \eta^{(3)}(1) = 0$ which, in matrix form, look as follows
    \begin{align*}
        \begin{bmatrix}
            -\sinh(t)\sin(t) & \sinh(t)\cos(t)-\cosh(t)\sin(t) \\
            -\sinh(t)\cos(t)-\cosh(t)\sin(t) & -2\sinh(t)\sin(t)
        \end{bmatrix}
        \begin{bmatrix} A \\ B \end{bmatrix}
        = \begin{bmatrix}0\\0\end{bmatrix}.
    \end{align*}
    For a non-trivial solution we need that matrix to be non-regular.
    To achieve that we have a look at the roots of its determinant:
    \begin{align*}
        2\sinh^2(t)\sin^2(t)
        +\sinh^2(t)\cos^2(t)-\cosh^2(t)\sin^2(t)
        \overset != 0.
    \end{align*}
        Using $\sin^2(t)+\cos^2(t) = \cosh^2(t)-\sinh^2(t) = 1$ we have
        \begin{align*}
        \sinh^2(t)-\sin^2(t)
        = \frac 12 \cosh(2t) + \frac 12 \cos(2t) - 1
        \overset != 0
        \end{align*}
    which is only fulfilled for $t = 0$, or equivalently, $\sigma^2 = 1$.
    Hence, there are no eigenvalues bigger than 1.
\end{proof}

\begin{lemma}\label{lemma:coszeros} For $0<t_2<t_3<\dots$ fulfilling $\cosh(t_k)\cos(t_k) = 1$ and $\tilde t_k = \frac{2k-1}{2}\pi$, we have
    \begin{align*}
        \frac 32 \pi < t_2
        \quad\text{and}\quad
        \Big\lvert \tilde t_k - t_k \Big\rvert
        \le \varepsilon
    \end{align*}
    for $k \ge \frac 1\pi \log(\pi/\varepsilon)$.
    In particular $ \lvert \tilde t_k - t_k \rvert \le \pi\exp(-2\pi) $ for all $k\ge 2$.
\end{lemma}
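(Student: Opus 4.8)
The plan is to rewrite the defining equation in the equivalent form $\cos t = 1/\cosh t$, which is legitimate since $\cosh t\ge 1>0$, and to exploit that the right-hand side is a small positive quantity comparable to $2e^{-t}$. A solution can therefore occur only where $\cos t$ is positive and close to zero, i.e.\ just to one side of a zero $\tilde t_k=\frac{2k-1}{2}\pi$ of the cosine; this already explains the approximation $t_k\approx\tilde t_k$ and drives the quantitative estimate.

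For the first assertion $t_2>\frac32\pi$ I would show the equation has no solution in $(0,\frac32\pi]$. Put $h(t)=\cosh t\cos t$; then $h(0)=1$, $h'(0)=0$, and $h''(t)=-2\sinh t\sin t<0$ on $(0,\pi)$, so $h$ is strictly decreasing there and $h(t)<1$ on $(0,\pi)$. On $[\pi/2,\frac32\pi]$ one has $\cos t\le 0$, hence $h(t)\le 0<1$. Since $(0,\pi)\cup[\pi/2,\frac32\pi]=(0,\frac32\pi]$, there is no root in this range, which forces the smallest positive root $t_2$ to satisfy $t_2>\frac32\pi=\tilde t_2$.

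For the general estimate I would first localize the roots: on each interval $((k-1)\pi,k\pi)$ the cosine is monotone and sweeps through all of $(-1,1)$, while $1/\cosh t\in(0,1)$ varies slowly, so $\phi(t)=\cos t-1/\cosh t$ has, on the monotone half where $\cos t>0$, derivative $\phi'(t)=-\sin t+\sinh t/\cosh^2 t$ of a fixed sign (since $\sin t$ is near $\pm1$ there and the second term is exponentially small). Hence $\phi$ has exactly one zero $t_k$, and it lies within $\pi/2$ of $\tilde t_k$. Writing $t_k=\tilde t_k+\delta_k$ and using $\cos(\tilde t_k+\delta)=(-1)^k\sin\delta$, the equation becomes $(-1)^k\sin\delta_k=1/\cosh t_k>0$, so $|\sin\delta_k|=1/\cosh t_k$. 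Jordan's inequality $|\sin\delta_k|\ge\frac2\pi|\delta_k|$, valid because $|\delta_k|\le\pi/2$, then gives $|\delta_k|\le\frac{\pi}{2\cosh t_k}$, and bounding $\cosh t_k\ge\frac12 e^{t_k}$ together with $t_k>(k-1)\pi$ yields $|\delta_k|\le\pi e^{-(k-1)\pi}$. Requiring $\pi e^{-(k-1)\pi}\le\varepsilon$ produces a threshold of the stated form $k\ge 1+\frac1\pi\log(\pi/\varepsilon)$, and substituting the smallest admissible index $k=2$ gives the uniform bound for all $k\ge 2$.

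The main obstacle is the localization/bootstrap step: the bound $t_k>(k-1)\pi$ used to estimate $\cosh t_k$ presupposes $|\delta_k|<\pi/2$, so I must first establish, independently of the exponential estimate, that the unique root near $\tilde t_k$ indeed lies in $((k-1)\pi,k\pi)$ before invoking Jordan's inequality. Once the monotonicity of $\phi$ on the correct half-interval and the signs of $\phi$ at the endpoints are pinned down, the remaining estimates are routine; the only genuinely delicate point is tracking the precise constant in the threshold (and hence in the ``in particular'' clause), which requires sharpening the lower bound on $t_k$ from $(k-1)\pi$ toward $\tilde t_k=(k-\tfrac12)\pi$ by feeding the tiny value of $|\delta_k|$ back into $\cosh t_k\ge\frac12 e^{t_k}$.
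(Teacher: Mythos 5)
Your proposal is correct in substance and follows essentially the same route as the paper's proof: rewrite the equation as $\cos t = 1/\cosh t$, localize the roots between consecutive multiples of $\pi$ using $0 < 1/\cosh t < 1$, then play the exponential bound $1/\cosh t \le 2e^{-t}$ against the linear behaviour of the cosine near its zeros. Your Jordan inequality $\lvert\sin\delta_k\rvert \ge \tfrac{2}{\pi}\lvert\delta_k\rvert$ is literally the paper's minorant $\cos t \ge (t-\tilde t_k)/(\pi/2)$, and both arguments land on the same estimate $\lvert\tilde t_k - t_k\rvert \le \pi e^{-(k-1)\pi}$. Where you differ is in presentation, mostly to your advantage: the identity $\cos(\tilde t_k+\delta) = (-1)^k\sin\delta$ lets you treat both parities at once instead of the paper's even/odd case distinction, and your argument for $t_2 > \tfrac32\pi$ (concavity of $h(t)=\cosh t\cos t$ on $(0,\pi)$ via $h''=-2\sinh t\sin t<0$, plus the sign of $\cos$ on $[\pi/2,\tfrac32\pi]$) is an actual proof, whereas the paper only appeals to the picture in Figure~\ref{fig:cos}.

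Two caveats. First, your claim that $\phi'(t) = -\sin t + \sinh t/\cosh^2 t$ has a fixed sign on the whole half-interval where $\cos t>0$ is false as stated: at the endpoint where $\cos t = 1$ one has $\sin t = 0$ and $\phi' > 0$, while near $\tilde t_k$ the sign is governed by $-\sin t$. The conclusion (exactly one root, within $\pi/2$ of $\tilde t_k$) survives with a standard patch: any root satisfies $\cos t = 1/\cosh t \le 2e^{-t}$, hence lies so close to $\tilde t_k$ that $\lvert\sin t\rvert > 0.99$; since $\sin$ has constant sign on the open half-interval, $\phi'$ has one and the same nonzero sign at every root there, which excludes a second root. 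Second, the constants: as you yourself note, the argument yields the threshold $k \ge 1 + \tfrac1\pi\log(\pi/\varepsilon)$, not the stated $k \ge \tfrac1\pi\log(\pi/\varepsilon)$ --- the paper's own proof has exactly the same discrepancy (its closing step ``$\ge \log(\pi/\varepsilon)/\pi$'' does not repair it). Consequently ``substituting $k=2$'' gives the uniform bound $\pi e^{-\pi}$ for all $k\ge 2$, not the stated $\pi e^{-2\pi}$; the latter follows only for $k\ge 3$ and is in fact false at $k=2$, since $t_2 \approx 4.7300$ and $\tilde t_2 = \tfrac32\pi \approx 4.7124$ give $\lvert t_2-\tilde t_2\rvert \approx 0.018 > \pi e^{-2\pi} \approx 0.006$. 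This is a defect of the lemma as stated rather than of your argument, and the weaker constant $\pi e^{-\pi}$ is all that the later applications need (e.g.\ the final estimate in Lemma~\ref{lemma:monotone} still closes, as $e^{-3\pi/2}+\sqrt2\,\pi e^{-\pi} < 1$).
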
 

\begin{proof} \begin{figure}
        \centering
        \begingroup
  \makeatletter
  \providecommand\color[2][]{\GenericError{(gnuplot) \space\space\space\@spaces}{Package color not loaded in conjunction with
      terminal option `colourtext'}{See the gnuplot documentation for explanation.}{Either use 'blacktext' in gnuplot or load the package
      color.sty in LaTeX.}\renewcommand\color[2][]{}}\providecommand\includegraphics[2][]{\GenericError{(gnuplot) \space\space\space\@spaces}{Package graphicx or graphics not loaded}{See the gnuplot documentation for explanation.}{The gnuplot epslatex terminal needs graphicx.sty or graphics.sty.}\renewcommand\includegraphics[2][]{}}\providecommand\rotatebox[2]{#2}\@ifundefined{ifGPcolor}{\newif\ifGPcolor
    \GPcolortrue
  }{}\@ifundefined{ifGPblacktext}{\newif\ifGPblacktext
    \GPblacktexttrue
  }{}\let\gplgaddtomacro\g@addto@macro
\gdef\gplbacktext{}\gdef\gplfronttext{}\makeatother
  \ifGPblacktext
\def\colorrgb#1{}\def\colorgray#1{}\else
\ifGPcolor
      \def\colorrgb#1{\color[rgb]{#1}}\def\colorgray#1{\color[gray]{#1}}\expandafter\def\csname LTw\endcsname{\color{white}}\expandafter\def\csname LTb\endcsname{\color{black}}\expandafter\def\csname LTa\endcsname{\color{black}}\expandafter\def\csname LT0\endcsname{\color[rgb]{1,0,0}}\expandafter\def\csname LT1\endcsname{\color[rgb]{0,1,0}}\expandafter\def\csname LT2\endcsname{\color[rgb]{0,0,1}}\expandafter\def\csname LT3\endcsname{\color[rgb]{1,0,1}}\expandafter\def\csname LT4\endcsname{\color[rgb]{0,1,1}}\expandafter\def\csname LT5\endcsname{\color[rgb]{1,1,0}}\expandafter\def\csname LT6\endcsname{\color[rgb]{0,0,0}}\expandafter\def\csname LT7\endcsname{\color[rgb]{1,0.3,0}}\expandafter\def\csname LT8\endcsname{\color[rgb]{0.5,0.5,0.5}}\else
\def\colorrgb#1{\color{black}}\def\colorgray#1{\color[gray]{#1}}\expandafter\def\csname LTw\endcsname{\color{white}}\expandafter\def\csname LTb\endcsname{\color{black}}\expandafter\def\csname LTa\endcsname{\color{black}}\expandafter\def\csname LT0\endcsname{\color{black}}\expandafter\def\csname LT1\endcsname{\color{black}}\expandafter\def\csname LT2\endcsname{\color{black}}\expandafter\def\csname LT3\endcsname{\color{black}}\expandafter\def\csname LT4\endcsname{\color{black}}\expandafter\def\csname LT5\endcsname{\color{black}}\expandafter\def\csname LT6\endcsname{\color{black}}\expandafter\def\csname LT7\endcsname{\color{black}}\expandafter\def\csname LT8\endcsname{\color{black}}\fi
  \fi
    \setlength{\unitlength}{0.0500bp}\ifx\gptboxheight\undefined \newlength{\gptboxheight}\newlength{\gptboxwidth}\newsavebox{\gptboxtext}\fi \setlength{\fboxrule}{0.5pt}\setlength{\fboxsep}{1pt}\definecolor{tbcol}{rgb}{1,1,1}\begin{picture}(4520.00,1700.00)\gplgaddtomacro\gplbacktext{\csname LTb\endcsname \put(4260,687){\makebox(0,0)[l]{\strut{}$x$}}\csname LTb\endcsname \put(70,1434){\makebox(0,0)[l]{\strut{}$y$}}\csname LTb\endcsname \put(643,542){\makebox(0,0)[l]{\strut{}$\pi/2$}}\csname LTb\endcsname \put(1914,542){\makebox(0,0)[l]{\strut{}$3\pi/2$}}\csname LTb\endcsname \put(3227,542){\makebox(0,0)[l]{\strut{}$5\pi/2$}}}\gplgaddtomacro\gplfronttext{}\gplbacktext
    \put(0,0){\includegraphics[width={226.00bp},height={85.00bp}]{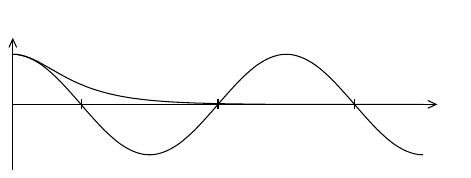}}\gplfronttext
  \end{picture}\endgroup
         \caption{$\cos(t)$ and $1/\cosh(t)$}\label{fig:cos}
    \end{figure}

    Since $0 < 1/\cosh(t) < 1$ for $t > 0$ and the oscillating behaviour of $\cos(t)$, as depicted in Figure~\ref{fig:cos}, we obtain
    \begin{align*}
        t_k \in \begin{cases}
            \Big(\frac{2k-1}{2}\pi, \frac{2k}{2}\pi\Big)
            &\text{for } k \text{ even} \\
            \Big(\frac{2k-2}{2}\pi, \frac{2k-1}{2}\pi\Big)
            &\text{for } k \text{ odd}\,.
        \end{cases}
    \end{align*}
    In particular, $\frac 32 \pi < t_2$.
    Furthermore, for even $k$ and $t \in \Big(\frac{2k-1}{2}\pi, \frac{2k}{2}\pi\Big)$ we have
    \begin{align*}
        \frac{1}{\cosh(t)}
        \le 2\exp(-t)
        \le 2\exp\Big(-\frac{2k-1}{2}\pi\Big)
        \quad\text{and}\quad
        \cos(t)
        \ge \frac{t-\frac{2k-1}{2}\pi}{\pi/2}.
    \end{align*}
    The function bounds intersect for a value larger than $t_k$, which we use to refine the interval:
    \begin{align*}
        t_k
        \in \Big( \tilde t_k ,\tilde t_k + \pi\exp\Big(-\frac{2k-1}{2}\pi\Big) \Big).
    \end{align*}

    Similarly, for odd $k$ and $t \in \Big(\frac{2k-2}{2}\pi, \frac{2k-1}{2}\pi\Big)$ we obtain
    \begin{align*}
        t_k
        \in \Big( \tilde t_k-\pi\exp\Big(-\frac{2k-2}{2}\pi\Big) ,\tilde t_k \Big).
    \end{align*}
    Thus, for $k\ge 2$ we have $\Big\lvert  \tilde t_k-t_k \Big\rvert \le \pi\exp(-(k-1)\pi)$,
    which is smaller than $\varepsilon$ for
    \begin{align*}
        k
        \ge \frac{\log(\pi/\varepsilon)}{\pi} + 1
        \ge \frac{\log(\pi/\varepsilon)}{\pi} \,.
    \end{align*}
\end{proof} 

\begin{lemma}\label{lemma:monotone} For $0<t_2<t_3<\dots$ fulfilling $\cosh(t_k)\cos(t_k) = 1$, we have that $\eta_{t_k}$ defined by
    \begin{align}\label{eq:h2I}
        \eta_{t_k}^{ I}(x) = \cosh(t_kx)-\frac{\cosh(t_k)-\cos(t_k)}{\sinh(t_k)-\sin(t_k)}\sinh(t_kx)
    \end{align}
    is convex and non-negative for all even $k$ and monotone for all odd $k$.
\end{lemma}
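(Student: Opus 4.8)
The plan is to reduce both assertions to the sign of the single scalar
$\beta_k \coloneqq \frac{\cosh(t_k)-\cos(t_k)}{\sinh(t_k)-\sin(t_k)}$,
which is exactly the coefficient appearing in \eqref{eq:h2I} and equals $-B$ for the quantity $B$ of Theorem~\ref{h2basis} and \eqref{eq:1B}. Since $\cosh(t_k) > 1 \ge \cos(t_k)$ and $\sinh(t_k) > t_k > \sin(t_k)$ for $t_k > 0$, we have $\beta_k > 0$, and I would rewrite the function in the exponential basis,
\begin{align*}
    \eta_{t_k}^{I}(x)
    &= \cosh(t_k x) - \beta_k \sinh(t_k x) \\
    &= \tfrac12\bigl((1-\beta_k)e^{t_k x} + (1+\beta_k)e^{-t_k x}\bigr).
\end{align*}
Differentiating twice yields $(\eta_{t_k}^{I})'' = t_k^2\,\eta_{t_k}^{I}$, so in the even case \emph{convexity is equivalent to non-negativity}; it therefore suffices to control the signs of the two coefficients $1\pm\beta_k$.

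First I would pin down the sign of $1-\beta_k$. As $\beta_k^2 = B^2$, the second identity in \eqref{eq:1B} reads $1-\beta_k^2 = -2\sin(t_k)/(\sinh(t_k)-\sin(t_k))$, whose denominator is positive; hence $1-\beta_k^2$ carries the opposite sign of $\sin(t_k)$. The sign of $\sin(t_k)$ in turn comes from the interval localization of Lemma~\ref{lemma:coszeros}: for even $k$ one has $t_k \in (k\pi-\tfrac\pi2,\,k\pi)$, where $\sin$ is negative, so $1-\beta_k^2 > 0$ and (using $\beta_k>0$) $0<\beta_k<1$; for odd $k$ one has $t_k \in ((k-1)\pi,\,(k-1)\pi+\tfrac\pi2)$, where $\sin$ is positive, so $1-\beta_k^2 < 0$ and $\beta_k>1$.

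With these signs both claims are immediate. For even $k$ both coefficients $1-\beta_k$ and $1+\beta_k$ are positive, so $\eta_{t_k}^{I}(x) > 0$ for every $x$; this gives non-negativity, and convexity follows from $(\eta_{t_k}^{I})'' = t_k^2\,\eta_{t_k}^{I} \ge 0$. For odd $k$ I would compute $(\eta_{t_k}^{I})'(x) = \tfrac{t_k}{2}\bigl((1-\beta_k)e^{t_k x} - (1+\beta_k)e^{-t_k x}\bigr)$; here $1-\beta_k<0$ and $-(1+\beta_k)<0$, so both summands are strictly negative and $\eta_{t_k}^{I}$ is strictly decreasing, in particular monotone.

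The only step that is not purely mechanical is the determination of the sign of $\sin(t_k)$, which I expect to be the main (though mild) obstacle: it rests entirely on the interval inclusions already established in Lemma~\ref{lemma:coszeros}, and one simply matches the parity of $k$ to the half-period of the sine on which $t_k$ lands. Everything downstream is a one-line consequence of whether $\beta_k<1$ or $\beta_k>1$, combined with the identity $(\eta_{t_k}^{I})'' = t_k^2\,\eta_{t_k}^{I}$.
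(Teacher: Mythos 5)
Your proof is correct, and its first half is essentially the paper's: both arguments rest on the dichotomy that the coefficient $\beta_k$ (the paper's $B(t_k)$ in its proof of this lemma) lying below $1$ forces convexity and non-negativity, while lying above $1$ forces monotonicity; the paper works directly with $\cosh(t_kx)-B\sinh(t_kx)$, you in the exponential basis, which makes the equivalence of convexity and non-negativity via $(\eta_{t_k}^{I})''=t_k^2\,\eta_{t_k}^{I}$ transparent. The genuine divergence is in deciding on which side of $1$ the coefficient falls. The paper converts $B(t_k)\lessgtr 1$ into the sign of $\exp(-t_k)-\sqrt 2\cos(t_k+\pi/4)$ and shows this has the sign of $(-1)^{k+1}$ by a quantitative perturbation argument, using the $1$-Lipschitz continuity of cosine and $\lvert t_k-\tilde t_k\rvert\le\pi\exp(-2\pi)$ from Lemma~\ref{lemma:coszeros} to bound the deviation by $\exp(-3\pi/2)+\sqrt 2\pi\exp(-2\pi)<1$. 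You instead invoke the exact identity \eqref{eq:1B}, $1-\beta_k^2=-2\sin(t_k)/(\sinh(t_k)-\sin(t_k))$, reducing everything to the sign of $\sin(t_k)$, which you read off from the half-period localization of $t_k$. Your route buys exactness: no numerical smallness constants need checking, and the parity-to-sign matching is algebraic. Two minor caveats. First, the interval inclusions you cite are established inside the \emph{proof} of Lemma~\ref{lemma:coszeros}, not in its statement; if you prefer to lean only on the statement, observe that $\sin(\tilde t_k)=(-1)^{k+1}$ and $\lvert\sin(t_k)-\sin(\tilde t_k)\rvert\le\lvert t_k-\tilde t_k\rvert\le\pi\exp(-2\pi)$, which pins down the sign of $\sin(t_k)$ equally well. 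Second, keep the sign conventions straight, as you already noted: your $\beta_k$ equals $-B$ in the notation of the proof of Theorem~\ref{h2basis}, where \eqref{eq:1B} is derived, but since \eqref{eq:1B} involves only $B^2$, your application of it to $\beta_k$ is legitimate.
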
 

\begin{proof} \textbf{Step 1.} We distinguish for different values of $B = B(t) \coloneqq (\cosh(t)-\cos(t))/(\sinh(t)-\sin(t))$.
    For $B < 1$ we have
        \begin{align*}
        \eta_t^{ I}(x)
        = \cosh(tx)-B\sinh(tx)
        \ge \cosh(tx)-\sinh(tx)
        \ge 0
        \end{align*}
    and by the same argument
    $ (\eta_t^{ I}(x))^{(2)} = t^2\eta_t^{ I}(x) \ge 0 $
    for all $x\ge 0$.
    Thus, $\eta_t^{ I}(x)$ is convex and non-negative.

    For $B > 1$ we obtain
        \begin{align*}
        (\eta_t^{{I}})'
= t(\sinh(tx)-B\cosh(tx))
        \le t(\sinh(tx)-\cosh(tx))
        \le 0
        \end{align*}
    for all $x \ge 0$.
    Thus, $\eta_t^{{I}}$ is monotone.

    \textbf{Step 2.} It is left to show for which $k$'s $B(t_k)$ attains a value smaller or bigger than one:
    \begin{align*}
        B(t_k) \lessgtr 1
        &\quad\Leftrightarrow\quad
        \cosh(t_k)-\cos(t_k) \lessgtr \sinh(t_k)-\sin(t_k) \\
        &\quad\Leftrightarrow\quad
        \exp(-t_k)-\sqrt{2}\cos(t_k+\pi/4) \lessgtr 0.
    \end{align*}
    We will show that $ \exp(-t_k)-\sqrt{2}\cos(t_k+\pi/4) $ has the same sign as $(-1)^{k+1}$ and, thus, are finished.
    We do this by estimating their difference by a quantity smaller than one.
    With $\tilde t_k = \frac{2k-1}{2}\pi$ we obtain
    \begin{align*}
        &\lvert \exp(-t_k)-\sqrt{2}\cos(t_k+\pi/4)-(-1)^{k+1}\rvert \\
&\quad = \lvert \exp(-t_k)-\sqrt{2}\cos(t_k+\pi/4)+\sqrt{2}\cos(\tilde t_k+\pi/4)\rvert.
    \end{align*}
    Using that $\cos$ is Lipschitz-continuous with constant $1$
    and Lemma~\ref{lemma:coszeros} we estimate the above by
    \begin{align*}
        \lvert \exp(-t_k)-\sqrt{2}\cos(t_k+\pi/4)-(-1)^{k+1}\rvert
        &\le \lvert \exp(-t_k)\rvert+\sqrt{2}\lvert t_k-\tilde t_k\rvert \\
        &\le \exp(-3/2\pi)+\sqrt 2 \pi \exp\Big(-2 \pi\Big),
    \end{align*}
    which is certainly smaller than one.
\end{proof}

\begin{lemma}\label{lemma:sym} For $0<t_2<t_3<\dots$ fulfilling $\cosh(t_k)\cos(t_k) = 1$, we have that $\eta_{t_k}^I$ defined in \eqref{eq:h2I} is even with respect to the axis $x=1/2$ for all even $k$ and vice versa.
\end{lemma}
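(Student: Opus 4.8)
The plan is to reduce the entire claim to the single identity $\eta_{t_k}^I(1-x) = (-1)^k\,\eta_{t_k}^I(x)$, since being even about the axis $x=1/2$ means $\eta_{t_k}^I(1-x)=\eta_{t_k}^I(x)$ while being odd means $\eta_{t_k}^I(1-x)=-\eta_{t_k}^I(x)$. Writing $c=\cosh(t_k)$, $s=\sinh(t_k)$ and $B=(c-\cos t_k)/(s-\sin t_k)$ as in \eqref{eq:h2I}, I would first expand $\eta_{t_k}^I(1-x)=\cosh(t_k-t_kx)-B\sinh(t_k-t_kx)$ with the hyperbolic addition theorems $\cosh(t_k-t_kx)=c\cosh(t_kx)-s\sinh(t_kx)$ and $\sinh(t_k-t_kx)=s\cosh(t_kx)-c\sinh(t_kx)$, and collect terms to obtain
\[
\eta_{t_k}^I(1-x) = (c-Bs)\cosh(t_kx) + (Bc-s)\sinh(t_kx).
\]
Comparing this to $\pm\eta_{t_k}^I(x)=\pm(\cosh(t_kx)-B\sinh(t_kx))$ reduces the task to the purely algebraic claim that $c-Bs=\pm1$ and $Bc-s=\mp B$ with matching signs.

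The second step is to evaluate $B$ using the two relations already available. From $\cosh(t_k)\cos(t_k)=1$ I get $\cos(t_k)=1/c$, and combining $\cos^2+\sin^2=1$ with $c^2-s^2=1$ gives $\sin^2(t_k)=1-1/c^2=s^2/c^2$, so $\sin(t_k)=\epsilon_k\, s/c$ with $\epsilon_k\in\{-1,+1\}$ the sign of $\sin(t_k)$. Substituting this together with $c-\cos t_k=(c^2-1)/c=s^2/c$ collapses $B$ to the clean value $B=s/(c-\epsilon_k)$. A one-line computation using $c^2-s^2=1$ then yields $c-Bs=(1-\epsilon_k c)/(c-\epsilon_k)=-\epsilon_k$ and $Bc-s=\epsilon_k s/(c-\epsilon_k)=\epsilon_k B$, whence $\eta_{t_k}^I(1-x)=-\epsilon_k\,\eta_{t_k}^I(x)$.

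The only remaining, and genuinely delicate, point is to pin down the sign $\epsilon_k$, and this is exactly where the interval localization of Lemma~\ref{lemma:coszeros} enters. For even $k$ that lemma places $t_k$ in $(\tfrac{2k-1}{2}\pi,k\pi)$, an interval lying just below an even multiple of $\pi$, where $\sin$ is negative, so $\epsilon_k=-1$; for odd $k$ it places $t_k$ in $((k-1)\pi,\tfrac{2k-1}{2}\pi)$, just above an even multiple of $\pi$, where $\sin$ is positive, so $\epsilon_k=+1$. Hence $\epsilon_k=(-1)^{k+1}$ and $\eta_{t_k}^I(1-x)=(-1)^k\,\eta_{t_k}^I(x)$, which is evenness about $x=1/2$ for even $k$ and oddness for odd $k$, as claimed. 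I expect the sign bookkeeping for $\epsilon_k$ to be the main obstacle; everything preceding it is routine once the addition theorems and $c^2-s^2=1$ are invoked.
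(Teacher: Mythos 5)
Your proof is correct, but it takes a genuinely different route from the paper. The paper argues indirectly: it shifts the function to $\eta_t^{I}(x+1/2)=\alpha\cosh(tx)+\beta\sinh(tx)$, shows by an algebraic computation (using \eqref{eq:1B} and $\cosh(t)\cos(t)=1$) that $\alpha\cdot\beta=0$, so that \emph{some} symmetry about $x=1/2$ must hold, and then identifies \emph{which} symmetry by invoking Lemma~\ref{lemma:monotone}: a convex non-constant function cannot be odd (even $k$), and a monotone non-zero function cannot be even (odd $k$). You instead compute the reflection directly via the addition theorems, collapse $B$ to the closed form $B=\sinh(t_k)/(\cosh(t_k)-\epsilon_k)$ with $\epsilon_k=\sign\sin(t_k)$, and obtain the explicit identity $\eta_{t_k}^{I}(1-x)=(-1)^k\eta_{t_k}^{I}(x)$ once $\epsilon_k=(-1)^{k+1}$ is pinned down. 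Your approach is self-contained and decouples the symmetry statement from Lemma~\ref{lemma:monotone} entirely (in the paper both this lemma and Lemma~\ref{lemma:monotone} ultimately pull the parity information from the localization of $t_k$); it also yields the quantitatively useful reflection formula and the simplified expression for $B$, neither of which the paper's existence-then-classification argument produces. One small bookkeeping point: the parity-dependent interval containment you quote appears inside the \emph{proof} of Lemma~\ref{lemma:coszeros}, not in its statement; but this is harmless, since the stated bound $\lvert t_k-\tilde t_k\rvert\le\pi\exp(-2\pi)$ together with $\sin(\tilde t_k)=(-1)^{k+1}$ and the Lipschitz continuity of $\sin$ already forces $\sin(t_k)$ to have sign $(-1)^{k+1}$, so your sign determination stands on the lemma as stated.
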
 

\begin{proof} \textbf{Step 1.} We will show that $\eta_{t_k}^{ I}$ has any symmetry around $x=1/2$.
    We shift the function and split it into an odd and an even part.
    For $B = B(t) = (\cosh(t)-\cos(t))/(\sinh(t)-\sin(h))$, we obtain
    \begin{align*}
        &\eta_t^{ I}(x+1/2)
        = \cosh(tx+t/2)-B\sinh(tx+t/2) \\
        &\quad= 
        \underbrace{(\cosh(t/2)-B\sinh(t/2))}_{\eqqcolon \alpha}\cosh(tx)
        +\underbrace{(\sinh(t/2)-B\cosh(t/2))}_{\eqqcolon \beta}\sinh(tx).
    \end{align*}
    Multiplying the two factors $\alpha$ and $\beta$ in front of $\cosh(tx)$ and $\sinh(tx)$, we obtain
    \begin{align*}
        \alpha\cdot \beta
        &= -B\cosh^2(t/2)
        -B\sinh^2(t/2)
        +(1+B^2)\cosh(t/2)\sinh(t/2) \\
        &= -B\frac{\cosh(t)-1}{2}
        -B\frac{\cosh(t)+1}{2}
        +(1+B^2) \frac{\sinh(t)}{2} \\
        &= -B\cosh(t)
        +(1+B^2) \frac{\sinh(t)}{2} \,.
    \end{align*}
    Using \eqref{eq:1B}, $\cosh(t)\cos(t) = 1$, and $1 = \cosh^2(t)-\sinh^2(t)$ this evaluates to
    \begin{align*}
        \alpha\cdot \beta
        = -\frac{\cosh^2(t)-1}{\sinh(t)-\sin(t)} + \frac{\sinh^2(t)}{\sinh(t)-\sin(t)}
        = 0 \,.
    \end{align*}
    And since we are not dealing with the zero function either $\alpha$ or $\beta$ is zero.
    Thus, $x\mapsto \eta_t^{ I}(x+1/2)$ obeys a symmetry.

    \textbf{Step 2.}
    It remains to specify the kind of symmetry.
    By Lemma~\ref{lemma:monotone} we have that $\eta_t^{ I}$ is convex for even $k$.
    Since a convex non-constant function cannot be odd it has to be even.
    Also by Lemma~\ref{lemma:monotone} we have that $\eta_t^{ I}$ is monotone for odd $k$.
    Since a monotone non-zero function cannot be even it has to be odd.
\end{proof} 

\begin{proof}[Proof of the second part of Theorem~\ref{h2basis}] The cases $k\in\{0,1\}$ are clear.
    For $k\ge 2$ we split the function into $\eta_t^{ I}$ defined in \eqref{eq:h2I} and
    \begin{align*}
        \eta_t^{{II}}(x) = \cos(tx)-\frac{\cosh(t)-\cos(t)}{\sinh(t)-\sin(t)}\sin(tx).
    \end{align*}
    We will show that each of these is bounded by $1.01\sqrt 2$ and, thus, obtain the assertion.

    \textbf{Step 1.} In order to bound $\eta_{t_k}^{{I}}$ we firstly have a look at the boundary points $x\in\{0, 1\}$.
    With Lemma~\ref{lemma:sym} we obtain
    \begin{equation}\label{eq:I}
        \eta_{t_k}^{{I}}(0) 
        = \Big\lvert  \eta_{t_k}^{ I}(1) \Big\rvert
        = 1.
    \end{equation}
    By Lemma~\ref{lemma:monotone} $\eta_{t_k}^{ I}$ is either non-negative and convex or monotone and, thus, cannot exceed its values on the boundary.

    \textbf{Step 2.} In order to bound $\eta_t^{{II}}$ we define
    \begin{align*}
        B \coloneqq \frac{\cosh(t)-\cos(t)}{\sinh(t)-\sin(t)}
        \quad\text{and}\quad
        \vartheta = \arg(1+B i).
    \end{align*}
    Next, we use the exponential definition of sine and cosine and the polar representation of complex numbers to obtain
    \begin{align*}
        \eta_t^{{II}}(x)
        &= \cos(tx)-B\sin(tx) \\
        &= \frac{\exp( i tx)+\exp(- i tx)}{2}
        +B  i\frac{\exp( i tx)-\exp(- i tx)}{2} \\
        &= \frac{(1+B i)\exp( i tx)+(1-B i)\exp(- i tx)}{2} \\
        &= \sqrt{1+B^2}\frac{\exp( i (tx+\vartheta))+\exp(- i (tx+\vartheta))}{2} \\
        &= \sqrt{1+B^2}\cos(tx+\vartheta)
    \end{align*}
    Thus, by \eqref{eq:1B}
    \begin{align}
        \lvert \eta_t^{{II}}(x)\rvert
        \le \sqrt{1+B^2}
        = \sqrt{\frac{2}{1-\sin(t)/\sinh(t)}}
        \le \sqrt{\frac{2}{1-1/\sinh(t)}} \label{eq:II}
    \end{align}
    From Lemma~\ref{lemma:coszeros} we use $t\ge 3/2\pi$ in combination with the monotonicity in \eqref{eq:II} we have $ \lvert \eta_t^{{II}}(x)\rvert \le 1.01\sqrt 2 $.

\end{proof}

\begin{lemma}\label{lemma:orange} For $t \ge \max\{ 2\log(4/\varepsilon), 3/2\pi \}$ we have for $x\in[0,1/2]$
    \begin{align*}
        \Big\lvert  \Big(1-\frac{\cosh(t)-\cos(t)}{\sinh(t)-\sin(t)}\Big)\sinh(tx) \Big\rvert
        \le \varepsilon \,.
    \end{align*}
\end{lemma}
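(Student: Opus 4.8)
The plan is to isolate the scalar factor $1-B$ with $B = (\cosh(t)-\cos(t))/(\sinh(t)-\sin(t))$, bound it explicitly over a common denominator, and then play the growth of $\sinh(tx)$ on $[0,1/2]$ against the exponential size of that denominator.

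First I would rewrite the factor as
\begin{align*}
    1 - \frac{\cosh(t)-\cos(t)}{\sinh(t)-\sin(t)}
    = \frac{(\sinh(t)-\cosh(t)) + (\cos(t)-\sin(t))}{\sinh(t)-\sin(t)}
    = \frac{-\mathrm e^{-t} + \sqrt 2\cos(t+\pi/4)}{\sinh(t)-\sin(t)},
\end{align*}
using $\sinh(t)-\cosh(t) = -\mathrm e^{-t}$ and $\cos(t)-\sin(t) = \sqrt 2\cos(t+\pi/4)$ (the same trigonometric identity already exploited in Lemma~\ref{lemma:monotone}). The numerator is therefore bounded in modulus by $\mathrm e^{-t}+\sqrt 2$, and since $t\ge 3\pi/2$ makes $\mathrm e^{-t}$ negligible, this is at most $2$. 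Note also that $\sinh(t)-\sin(t)\ge \sinh(t)-1>0$ for $t\ge 3\pi/2$, so $B$ is well defined and the sign of $1-B$ never enters.

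Next I would use monotonicity of $\sinh$ to bound $\sinh(tx)\le\sinh(t/2)$ on $[0,1/2]$, and bound the denominator below by $\sinh(t)-1$. The decisive step is cancelling the surviving $\sinh(t/2)$ via the double-angle identity $\sinh(t)=2\sinh(t/2)\cosh(t/2)$: because $t\ge 3\pi/2$ gives $\sinh(t/2)\cosh(t/2)=\tfrac12\sinh(t)\ge 1$, we get $\sinh(t)-1\ge\sinh(t/2)\cosh(t/2)$, and hence
\begin{align*}
    \left\lvert\left(1-\frac{\cosh(t)-\cos(t)}{\sinh(t)-\sin(t)}\right)\sinh(tx)\right\rvert
    \le \frac{2\,\sinh(t/2)}{\sinh(t/2)\cosh(t/2)}
    = \frac{2}{\cosh(t/2)}
    \le 4\,\mathrm e^{-t/2},
\end{align*}
where the last inequality uses $\cosh(t/2)\ge\tfrac12\mathrm e^{t/2}$. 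Finally, the hypothesis $t\ge 2\log(4/\varepsilon)$ yields $4\mathrm e^{-t/2}\le\varepsilon$, which is the claim.

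I do not anticipate a genuine obstacle; the only delicate point is the bookkeeping of constants so that the factor $4$, and therefore the threshold $2\log(4/\varepsilon)$, comes out exactly rather than with spurious slack. The assumption $t\ge 3\pi/2$ is invoked twice — once to collapse $\mathrm e^{-t}+\sqrt 2\le 2$ and once to guarantee $\sinh(t/2)\cosh(t/2)\ge 1$ for the cancellation — and both are immediate since $\sinh(3\pi/2)$ is far larger than $2$.
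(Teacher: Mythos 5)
Your proof is correct and follows essentially the same route as the paper's: the same rewriting of $1-B$ via $\cosh(t)-\sinh(t)=\mathrm e^{-t}$ and $\cos(t)-\sin(t)=\sqrt 2\cos(t+\pi/4)$, the same numerator bound by $2$, and the same cancellation of $\sinh(t/2)$ against the denominator (the paper writes it as $1-\sin(t)/\sinh(t)>1/2$ where you write $\sinh(t)-\sin(t)\ge\sinh(t)-1\ge\tfrac12\sinh(t)$), ending in the identical bound $2/\cosh(t/2)\le 4\mathrm e^{-t/2}\le\varepsilon$.
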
 

\begin{proof} We use $\cosh(t) - \sinh(t) = \exp(-t)$ and $\cos(t)-\sin(t) = \sqrt 2\cos(t+\pi/4)$ to estimate
    \begin{align*}
        \Big\lvert  \Big(1-\frac{\cosh(t)-\cos(t)}{\sinh(t)-\sin(t)}\Big)\sinh(tx) \Big\rvert
        &= \lvert\sqrt 2 \cos(t+\pi/4)-\exp(-t)\rvert\Big\lvert  \frac{\sinh(tx)}{\sinh(t)-\sin(t)} \Big\rvert.
    \end{align*}
    Since $x\le 1/2$, $\sinh$ strictly monotone growing, and $t \ge 3/2\pi$ by Lemma~\ref{lemma:coszeros}, we further estimate
    \begin{align*}
        \Big\lvert  \Big(1-\frac{\cosh(t)-\cos(t)}{\sinh(t)-\sin(t)}\Big)\sinh(tx) \Big\rvert
        &\le 2 \Big\lvert  \frac{\sinh(t/2)}{\sinh(t)-\sin(t)} \Big\rvert \\
        &= 2 \Big\lvert  \frac{1}{2\cosh(t/2)} \frac{1}{1-\sin(t)/\sinh(t)} \Big\rvert
    \end{align*}
    Using $1-\sin(t)/\sinh(t) > 1/2$ for $t > 3/2\pi$, we obtain
    \begin{align*}
        \Big\lvert  \Big(1-\frac{\cosh(t)-\cos(t)}{\sinh(t)-\sin(t)}\Big)\sinh(tx) \Big\rvert
        \le \frac{2}{\cosh(t/2)}
        \le \frac{4}{\exp(t/2)} \,,
    \end{align*}
    which is smaller than $\varepsilon$ for $t \ge 2\log(4/\varepsilon)$.
\end{proof}

\begin{lemma}\label{lemma:small} For $0<t_2<t_3<\dots$ fulfilling $\cosh(t_k)\cos(t_k) = 1$, we have
    \begin{flalign*}
        && \Big\lvert  \eta_{t_k}^{{II}}(x) - \sqrt 2\cos(t_kx+\pi/4) \Big\rvert
        &\le \varepsilon
        \quad\text{for}\quad x\in[0, 1] && \\
        \text{and} && \Big\lvert  \eta_{t_k}^{{I}}(x) - \exp(-tx) \Big\rvert
        &\le \varepsilon
        \quad\text{for}\quad x \in [0, 1/2] &&
    \end{flalign*}
    for $k \ge \frac 2\pi \log(4/\varepsilon) + 1$.
\end{lemma}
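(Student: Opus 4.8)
The plan is to reduce both inequalities to Lemma~\ref{lemma:orange}. Writing $B = B(t_k) = \frac{\cosh(t_k)-\cos(t_k)}{\sinh(t_k)-\sin(t_k)}$, I would first record the elementary identities $\exp(-t_kx) = \cosh(t_kx)-\sinh(t_kx)$ and $\sqrt 2\cos(t_kx+\pi/4) = \cos(t_kx)-\sin(t_kx)$. Substituting the definitions of $\eta_{t_k}^{I}$ and $\eta_{t_k}^{II}$ and cancelling the common $\cosh(t_kx)$ resp.\ $\cos(t_kx)$ terms, the two differences collapse to
\begin{align*}
    \eta_{t_k}^{I}(x)-\exp(-t_kx) &= (1-B)\sinh(t_kx), \\
    \eta_{t_k}^{II}(x)-\sqrt 2\cos(t_kx+\pi/4) &= (1-B)\sin(t_kx).
\end{align*}
In particular the first difference is precisely the quantity bounded in Lemma~\ref{lemma:orange}, since $1-B = 1-\frac{\cosh(t_k)-\cos(t_k)}{\sinh(t_k)-\sin(t_k)}$.

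Second, I would verify that the hypotheses of Lemma~\ref{lemma:orange} are met at $t = t_k$, that is, $t_k \ge \max\{2\log(4/\varepsilon),\, 3\pi/2\}$. From the interval localisation in Lemma~\ref{lemma:coszeros} one has $t_k > (k-1)\pi$ for every $k\ge 2$, so the assumption $k \ge \frac 2\pi\log(4/\varepsilon)+1$ yields $t_k > (k-1)\pi \ge 2\log(4/\varepsilon)$; moreover $t_k \ge t_2 > 3\pi/2$. Hence Lemma~\ref{lemma:orange} applies with $t = t_k$.

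Third, I would finish each estimate. The bound on $\eta_{t_k}^{I}-\exp(-t_k\cdot)$ for $x\in[0,1/2]$ is then immediate from Lemma~\ref{lemma:orange}. For $\eta_{t_k}^{II}$ on all of $[0,1]$, I would use $|\sin(t_kx)|\le 1\le \sinh(t_k/2)$, the latter because $t_k/2\ge 3\pi/4$ forces $\sinh(t_k/2)\ge\sinh(3\pi/4)>1$, to write
\begin{align*}
    \bigl|(1-B)\sin(t_kx)\bigr|
    \le |1-B|
    \le |1-B|\sinh(t_k/2)
    = \bigl|(1-B)\sinh(t_k/2)\bigr|
    \le \varepsilon,
\end{align*}
where the last step is Lemma~\ref{lemma:orange} evaluated at the endpoint $x=1/2$.

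The argument is essentially bookkeeping once the two differences are recognised as $(1-B)$ times a bounded hyperbolic or trigonometric factor; the only points requiring care are (a) converting the index threshold $k \ge \frac 2\pi\log(4/\varepsilon)+1$ into the size condition $t_k \ge 2\log(4/\varepsilon)$ through the localisation of $t_k$ from Lemma~\ref{lemma:coszeros}, and (b) observing $\sinh(t_k/2)\ge 1$, so that the full-interval estimate on $\eta_{t_k}^{II}$ can be extracted from the single endpoint value furnished by Lemma~\ref{lemma:orange}. No genuinely new computation is needed beyond these two observations.
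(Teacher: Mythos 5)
Your proof is correct and follows essentially the same route as the paper's: both reduce the two differences to $(1-B)\sinh(t_kx)$ and $(1-B)\sin(t_kx)$ via the identities $\exp(-t_kx)=\cosh(t_kx)-\sinh(t_kx)$ and $\sqrt 2\cos(t_kx+\pi/4)=\cos(t_kx)-\sin(t_kx)$, then invoke Lemma~\ref{lemma:orange} after converting the index threshold $k\ge\frac 2\pi\log(4/\varepsilon)+1$ into the size condition $t_k\ge\max\{2\log(4/\varepsilon),\,3\pi/2\}$ by means of Lemma~\ref{lemma:coszeros}. You are in fact slightly more explicit than the paper at one point: the paper cites Lemma~\ref{lemma:orange} directly for the $(1-B)\sin(t_kx)$ term on all of $[0,1]$, whereas you supply the justification $\lvert\sin(t_kx)\rvert\le 1\le\sinh(t_k/2)$, which is precisely the bookkeeping needed to make that citation rigorous.
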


\begin{proof}
    \textbf{Step 1.}
    For the first inequality we use
    \begin{align*}
        \sqrt 2\cos(tx+\pi/4)
        = \cos(tx)-\sin(tx)
    \end{align*}
    to obtain
    \begin{align*}
        \Big\lvert  \eta_{t}^{{II}} - \sqrt 2\cos(tx+\pi/4) \Big\rvert
        = \Big\lvert  \Big(1-\frac{\cosh(t)-\cos(t)}{\sinh(t)-\sin(t)}\Big)\sin(tx) \Big\rvert
    \end{align*}
    which is smaller than $\varepsilon$ for $t > \max\{ 2\log(4/\varepsilon), 3/2\pi \}$ by Lemma~\ref{lemma:orange}.
    
    The second inequality follows analogously from $\exp(-tx) = \cosh(tx)-\sinh(tx)$ and Lemma~\ref{lemma:orange}.

    \textbf{Step 2.}
    It is left to show the condition $t \ge \max\{ 2\log(4/\varepsilon), 3/2\pi \}$ from Step 1.
    By Lemma~\ref{lemma:coszeros} we have $t_k \ge 3/2\pi$.
    Further, by assumption, we have
    \begin{align*}
        k \ge \frac 2\pi \log\Big(\frac{4}{\varepsilon}\Big)+1
        \ge \frac{2}{\pi}\log\Big(\frac{4}{\varepsilon}\Big) + \exp(-2\pi) + \frac 12
    \end{align*}
    Thus,
    \begin{align*}
        2\log\Big(\frac{4}{\varepsilon}\Big)
        \le
        \frac{2k-1}{2}\pi - \pi\exp(-2\pi)
        \le t_k
    \end{align*}
    where the last inequality follows from Lemma~\ref{lemma:coszeros}.
\end{proof} 

\begin{proof}[Proof of Theorem~\ref{h2approximation}]
    Because of the symmetry shown in Lemma~\ref{lemma:sym} we assume without loss of generality $x\in[0,1/2]$.
    Then
    \begin{align*}
        &\lvert  \eta_n(x)- \tilde\eta_n(x) \rvert 
        \le 
        \Big\lvert  \eta_n^{ I}(x) - \exp(-t_kx) \Big\rvert
        + \Big\lvert  \eta_n^{{II}}(x) - \sqrt 2\cos(t_kx+\pi/4) \Big\rvert \\
        &\quad + \Big\lvert  \exp(-t_kx) -\exp(-\tilde t_k x) \Big\rvert
        + \Big\lvert  \sqrt 2\cos(t_kx+\pi/4) -\sqrt 2\cos(\tilde t_k x+\pi/4) \Big\rvert.
    \end{align*}
    By Lemma~\ref{lemma:small}, the first two summands are each smaller than $\varepsilon/4$ each for $n > \frac 2\pi \log(16/\varepsilon)+1$.
    We estimate the two latter summands as follows.

    Since $\cos$ is Lipschitz continuous with constant one we have
    \begin{align*}
        \Big\lvert  \sqrt 2\cos(t_k x + \pi/4) - \sqrt 2\cos\Big(\tilde t_k x+\frac{\pi}{4}\Big) \Big\rvert
        \le \Big\lvert  \sqrt 2\Big(t_k-\tilde t_k\Big) \Big\rvert
    \end{align*}
    which, by Lemma~\ref{lemma:coszeros} is smaller than $\varepsilon/4$ for $k > \frac 1\pi \log(4\pi/(\sqrt 2\varepsilon))$.

    Since $\exp$ is Lipschitz continuous with constant $1$ on $(-\infty, 0)$, we have
    \begin{align*}
        \Big\lvert  \exp(-t_k x)
        - \exp(-\tilde t_k x) \Big\rvert
        \le \Big\lvert  t_k - \tilde t_k \Big\rvert
    \end{align*}
    which, by Lemma~\ref{lemma:coszeros} is smaller than $\varepsilon/4$ for $k > \frac 1\pi \log(16/\varepsilon)$.

    Overall, we obtain
    $ \lvert  \eta_n(x)- \tilde\eta_n(x) \rvert < 4\frac{\varepsilon}{4} = \varepsilon $
    for
    \begin{align*}
        k > \max\Big\{
            \frac 2\pi \log(16/\varepsilon)+1,\,
            \frac 1\pi \log(4\pi/(\sqrt 2\varepsilon)),\,
            \frac 1\pi \log(16/\varepsilon)
            \Big\}
        = \frac 2\pi \log(16/\varepsilon)+1\,.
    \end{align*}
\end{proof}
 
\bibliographystyle{plain}

\end{document}